\newtheorem{theorem}{Theorem}[section]
\newtheorem{lemma}[theorem]{Lemma}
\newtheorem{remark}[theorem]{Remark}
\newtheorem{definition}[theorem]{Definition}
\newtheorem{corollary}[theorem]{Corollary}
\newcommand{\thm}[1]{Theorem~\ref{#1}}
\newcommand{\lem}[1]{Lemma~\ref{#1}}
\newcommand{\cor}[1]{Corollary~\ref{#1}}
\newcommand{\sectio}[1]{Section~\ref{#1}}
\newcommand{\wx}{\widetilde x}
\newcommand{\W}{{\mathsf w}}
\newcommand{\tPhi}{\widetilde\Phi}
\newcommand{\uar}{%
\mathrel{\raisebox{.1em}{%
\reflectbox{\rotatebox[origin=c]{90}{$\rightsquigarrow$}}}}}
\newcommand{\dar}{%
\mathrel{\raisebox{.1em}{%
\reflectbox{\rotatebox[origin=c]{270}{$\rightsquigarrow$}}}}}
\DeclareMathOperator{\Lip}{Lip}
\DeclareMathOperator{\dist}{dist}
\newcommand{\sect}[1]{\section{#1} \setcounter{equation}{0} }
\newcommand{\norm}[2]{\left\|#1\right\|_{#2}}
\newcommand{\Poly}{\Pi}
\newcommand{\Pn}{\Poly_n}
\newcommand{\N}{\mathbb N}
\newcommand{\R}{\mathbb R}
\newcommand{\DD}{{\mathcal D}}
\newcommand{\DDD}{{\mathfrak d}}
\newcommand{\K}{{\mathcal K}}
\newcommand{\KK}{\K_n(f, A, k,r)}
\newcommand{\tj}{{\mathfrak z}}
\newcommand{\andd}{\quad\mbox{\rm and}\quad}
\newcommand{\sn}{|f|_{\Lip^*\alpha}}
\newcommand{\co}{\vartheta}
\newcommand\w{{\omega}}
\newcommand\vare{\varepsilon}
\newcommand\eps{\epsilon}
\newcommand\we{{\widetilde\varepsilon\,}}
\def\be  {\begin{equation}}
\def\ee  {\end{equation}}
 \newcommand\uw{\mathrm{w}}
\newcommand{\ineq}[1]{(\ref{#1})}
\newcommand{\ie}{i.e., }
\newcommand{\eg}{e.g., }
\newcommand{\suchthat}{\;\; \big| \;\;}
\title{{\sc Exact order of pointwise estimates for polynomial approximation with Hermite interpolation
}\thanks{{\it AMS classification:} Primary 41A25, 41A10;  Secondary 41A05, 41A17, 41A28 {\it Keywords
and phrases:} Hermite interpolation,  simultaneous approximation,  moduli of smoothness, exact estimates, exact orders,  interpolatory estimates, approximation with Hermite interpolation of Sobolev and Lipschitz classes }}
\author{K. A. Kopotun\thanks
{Department of Mathematics, University of Manitoba,Winnipeg, Manitoba, R3T 2N2, Canada ({\tt
kirill.kopotun@umanitoba.ca}). Supported in part by NSERC grant RGPIN-05678-2020.}\ \
D. Leviatan\thanks{Raymond and Beverly Sackler School of Mathematical
Sciences, Tel Aviv University, Tel Aviv 6139001, Israel ({\tt
leviatan@tauex.tau.ac.il}).}\ \
and \ I. A. Shevchuk\thanks
{Faculty of Mechanics and Mathematics, Taras
Shevchenko National University of Kyiv, 01601 Kyiv, Ukraine ({\tt
shevchuk@univ.kiev.ua}). Supported by the National Research Foundation of Ukraine, Project  \#2020.02/0155.}
}
\begin{document}

\maketitle

\abstract{ 
We establish best possible pointwise (up to a constant multiple) estimates for  approximation, on a finite interval, by polynomials that satisfy finitely many (Hermite) interpolation conditions, and show that these estimates cannot be improved.
In particular, we show that {\bf any} algebraic polynomial of degree $n$ approximating  a function $f\in C^r(I)$, $I=[-1,1]$, at the classical pointwise rate $c(k,r)\rho_n^r(x) \w_k(f^{(r)}, \rho_n(x))$, where $\rho_n(x)=n^{-1}\sqrt{1-x^2}+n^{-2}$, and $c(k,r)$ is a constant which depends only on $k$ and $r$, and is independent of $f$ and $n$; and  (Hermite) interpolating $f$ and its derivatives up to the order $r$ at a point $x_0\in I$, has the best possible pointwise rate of (simultaneous) approximation of $f$   near $x_0$.
 Several applications are given.
}

%%%%%%%%%%%%%%%%%%%%%%%%%

\sect{Introduction and main results}

The main theme of this paper is polynomial approximation of functions on a finite interval, where we impose on the polynomials finitely many interpolation conditions  (including Hermite interpolation).
% This a classical area that is often discussed in standard textbooks on Approximation Theory and Numerical Analysis. %However, there is limited discussion of how well such interpolation process approximates the function.
%
 Stated simply, the question we discuss is how well   a continuous (or continuously differentiable) function can be approximated by algebraic polynomials if it is required that the polynomials also interpolate this function (and perhaps its derivatives) at a given set of points. We are especially interested in  the improvement of the rate of approximation  near these interpolation points.
 Clearly, due to the interpolation, this rate   may be improved, but is there a limit to this improvement? The main purpose of this paper is to provide exact answers, that is, to establish these types of estimates and to show that they cannot be improved.

\subsection{Motivation}

 We start by recalling some standard notation. As usual, $C^r(J)$ denotes the space of $r$ times continuously differentiable functions on $J$,  $C^0(J):=C(J)$ is the space of continuous functions on $J$, equipped with the uniform norm which   will be denoted by $\norm{\cdot}{J}$. For $k\in\N$ and an interval $J$,
$\Delta^k_u(f,x;J):= \sum_{i=0}^k(-1)^i  \binom ki    f(x+(k/2-i)u)$ if $x\pm ku/2\in J$ and $:=0$, otherwise, and
$
\w_k(f,t;J):=\sup_{0<u\le t}\|\Delta^k_u(f,\cdot;J)\|_{J}
$
is the $k$th modulus of smoothness of $f$ on $J$.
When dealing with the interval $I:=[-1,1]$, we suppress referring to the interval and  use the notation  $\|\cdot\|:=\|\cdot\|_{I}$,  $\omega_k(f,t):=\omega_k(f,t;I)$, $C^r := C^r(I)$,  etc.

Also,
\[ %\be\label{varphi}
\varphi(x):=\sqrt{1-x^2}, \quad \rho_n(x):= \varphi(x)n^{-1}+ n^{-2}, \; n\in\N, %\quad  S_n:=   [-1,-1+n^{-2}] \cup  [1-n^{-2}, 1] ,
\]
$\rho_0(x)\equiv 1$, and  $\Pn$ denotes  the space of algebraic polynomials of degree $\le n$. Note that $\varphi(x) n^{-1} \sim \rho_n(x)$ for $x\in I\setminus S_n$, where $S_n:=   [-1,-1+n^{-2}] \cup  [1-n^{-2}, 1]$.

 Recall the classical Timan-Dzyadyk-Freud-Brudnyi direct theorem for the approximation by algebraic polynomials (see  \cites{D-1958, F-1959, B,T-1951}):
if $k\in\N$, $r\in\N_0$ and $f\in C^r$, then for each $n\geq k+r-1$ there is a polynomial $P_n\in\Pn$ satisfying
\be \label{classdir}
|f(x)-P_n(x)| \le c(k,r) \rho_n^r(x) \w_k(f^{(r)}, \rho_n(x)) , \quad x\in I .
\ee

As one can  see in  \ineq{classdir}, the order of approximation becomes significantly better near the endpoints of $I$ than in the middle. One might think that if   the approximating polynomials happen to {\em interpolate} $f$ at the endpoints, then, perhaps, the estimates become even better (sometimes, these types of estimates are called {\em interpolatory (pointwise) estimates}). This turns out to be correct, however, there is a limit to how much they can improve.

The following is a brief history of development of interpolatory estimates for polynomial approximation on $I$. There is also a parallel history of development of interpolatory estimates for {\em simultaneous} approximation,
but we do not discuss it here in order to avoid the possibility of confusing readers with technical details and restrictions on parameters involved which, as will be seen later,
are an artifact of a specific form of the estimates and may be avoided. (Interested readers can refer to \cite{K-sim}*{pp. 68, 69} for further discussions.)

In 1963,  Lorentz \cite{Lor}  asked if quantity $\rho_n(x)$ in \ineq{classdir} can be replaced by $\varphi(x)n^{-1}$.  He actually posed the problem in the case $k=1$ only, but it is understandable since Brudnyi \cite{B}   published his proof of \ineq{classdir} for $k>1$ in 1963, and so Lorentz may not have been aware of this new development when he posed his problem in \cite{Lor} (although \ineq{classdir} in the case $k=2$ was known as early as 1958, see \cites{D-1958,F-1959}).
Hence, the following open problem was offered to the research community in 1963:
determine all pairs $(k,r)\in\N\times\N_0$, such that for any $f\in C^r$ and $n\geq c(k,r)$ there is a polynomial $P_n\in\Pn$ satisfying
\be \label{interpol}
|f(x)-P_n(x)| \le c(k,r)  \varphi^r(x)n^{-r}   \w_k(f^{(r)}, \varphi(x)n^{-1}) , \quad x\in I .
\ee

In 1966, Teljakovski\u{i} \cite{Tel} proved \ineq{interpol} in the case $k=1$
and, in 1967,  Gopengauz \cite{Gop} published a stronger result on simultaneous approximation  yielding Teljakovski\u{i}'s theorem as a corollary. One remark is in order here. Teljakovski\u{i}'s paper \cite{Tel} was submitted on 16 April 1965, and there is the following footnote in Gopengauz's paper \cite{Gop}: ``Original article submitted September 30, 1966. This article was received from
the editorial staff of the journal {\em Uspekhi matematicheskikh nauk} (where it was submitted October 30, 1964)
in connection with the liquidation of this journal's division {\em Scientific reports and problems}.'' Hence, it seems that Gopengauz's paper was prepared for publication before Teljakovski\u{i}'s.
%Because of that, some authors refer to estimates similar to \ineq{interpol}  as {\em Gopengauz type} estimates instead of a somewhat more common {\em Teljakovski\u{i}-Gopengauz type} estimates.
%Because of that, estimates similar to \ineq{interpol} are usually referred to as {\em Teljakovski\u{i}-Gopengauz type} estimates.
%
%Irrespectively of this history, since it seems clear that Teljakovski\u{i} and Gopengauz worked independently on problems of the same type,
Estimates similar to \ineq{interpol} are usually referred to as {\em Teljakovski\u{i}-Gopengauz type} estimates.
In 1975, DeVore \cites{D,De76} proved \ineq{interpol} in the case $(k,r)=(2,0)$ being the first to prove a Teljakovski\u{i}-Gopengauz estimate for $k>1$.
In 1983, Hinnemann and Gonska \cite{HG} extended DeVore's result by establishing \ineq{interpol} in the case $k=2$ and $r\in \N$.
In 1985, they \cite{GH} also showed the validity of \ineq{interpol} for $k\le r+2$.
In 1985,   Yu \cite{Yu} showed that \ineq{interpol}  is not true if  $k \ge r + 3$,  thus showing that there is a limit to improvement (this was later strengthened in \cite{GLSW}*{Theorem 1}).

To summarize the above, \ineq{interpol} is valid if and only if
\[ %\label{region}
(k,r) \in \Upsilon := \left\{ (k,r) \suchthat k\in N, r\in N_0 \andd k\le r+2 \right\}.
\]

Ever since, there has been great interest in Teljakovski\u{i}-Gopengauz--type estimates for problems dealing with:
simultaneous approximation of a function and its derivatives \cites{Da, AB, Li, K-sim},
(generalized) discrete linear polynomial operators satisfying this type of estimates \cites{Sz,BG,KS86},
polynomial approximation with extra interpolation conditions \cites{V,XZ},
or  with extra Hermite interpolation conditions \cites{KP96,Trigub, AB, TIzv}. % at prescribed points inside the interval.

We now note that, while \ineq{interpol} is perhaps aesthetically pleasing, there is no reason why the quantity $\rho_n(x)$ in \ineq{classdir} should be replaced by $\varphi(x)n^{-1}$ and not something which approaches zero at $\pm 1$ faster than with order $1/2$ but is ``the same as $\rho_n(x)$ in the middle of $I$'' (as it is well known that $\rho_n(x)$ in \ineq{classdir} cannot  essentially be replaced by a smaller quantity away from the endpoints of $I$).

First, we have the following theorem which is an immediate consequence of \cite{K-sim}*{Corollary 2-3.4} (proved for simultaneous approximation there).

\begin{theorem}[\mbox{see \cite{K-sim}*{Corollary 2-3.4}}]\label{thk-sim}%
Let $r\in\N_0$, $k\in\N$ and $f\in  C^r$. Then for any $n\ge \max\{k+r-1, 2r+1\}$, there is a polynomial $P_n \in \Pn$ such that
\ineq{classdir} is valid and, moreover,
for $x\in S_n:=[-1,-1+n^{-2}] \cup  [1-n^{-2}, 1]$, the following improved estimate  holds
\be \label{sim2}
|f(x)-P_n(x)| \le c(r,k) \varphi^{2r}(x)  \w_k(f^{(r)}, \varphi^{2/k}(x) n^{-2+2/k} ).
\ee
\end{theorem}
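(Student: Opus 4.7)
The plan is to derive this result as a direct specialization (to the $j=0$ case) of \cite{K-sim}*{Corollary 2-3.4}. That corollary is formulated in the stronger setting of \emph{simultaneous} approximation: for each admissible $n$ it produces a single polynomial $P_n\in\Pn$ satisfying pointwise bounds for $|f^{(j)}(x)-P_n^{(j)}(x)|$ for all $0\le j\le r$, consisting of a classical Timan--Dzyadyk--Brudnyi-type bound valid on all of $I$ and a sharper bound valid on the endpoint strip $S_n$. Reading off the $j=0$ case of these two bounds yields precisely \ineq{classdir} and \ineq{sim2}, and nothing further needs to be proved.

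Beyond this reduction, the only sanity check I would perform is that \ineq{sim2} really does refine \ineq{classdir} on $S_n$ (otherwise the theorem would be vacuous). For $x\in S_n$ we have $\varphi(x)\le\sqrt{2}/n$, hence $\rho_n(x)\sim n^{-2}$, $\varphi^{2r}(x)\lesssim n^{-2r}\sim\rho_n^r(x)$, and $\varphi^{2/k}(x)n^{-2+2/k}\lesssim n^{-2}\sim\rho_n(x)$. Thus both the prefactor $\varphi^{2r}(x)$ and the argument $\varphi^{2/k}(x)n^{-2+2/k}$ of the modulus in \ineq{sim2} are dominated by their counterparts in \ineq{classdir}, and both become strictly smaller as $x\to\pm 1$, so the strip estimate is genuinely stronger.

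The only substantive content --- which is suppressed by the citation --- is the construction from \cite{K-sim}: the polynomial $P_n$ is built so as to effectively Hermite-interpolate $f$ at $\pm 1$ up to order $r$, and the refined bound on $S_n$ arises by representing the error near an endpoint via a Taylor-type remainder. That remainder naturally carries a factor comparable to $(1\mp x)^r\sim\varphi^{2r}(x)$, and the argument of $\w_k(f^{(r)},\cdot)$ reflects how well $f^{(r)}$ can be approximated on a short interval of length $\varphi^{2/k}(x)n^{-2+2/k}$ adjacent to the endpoint; the thresholds $n\ge k+r-1$ and $n\ge 2r+1$ are, respectively, the usual admissibility range of \ineq{classdir} and the lower bound needed to make the simultaneous-approximation Hermite construction at $\pm 1$ non-degenerate. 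Since the cited corollary is treated here as a black box, Theorem~\ref{thk-sim} follows in a single line, and there is no hard step in the present proof.
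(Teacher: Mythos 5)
Your proposal is correct and matches the paper's treatment exactly: the paper states this theorem as an immediate consequence of \cite{K-sim}*{Corollary 2-3.4} and provides no further proof, and your reduction to the $j=0$ case of that corollary, together with the sanity check that $\varphi^{2r}(x)\lesssim\rho_n^r(x)$ and $\varphi^{2/k}(x)n^{-2+2/k}\lesssim\rho_n(x)$ on $S_n$, is all that is required.
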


It follows from \cite{K-sim}*{Theorem 3}    that, for any $\gamma \in\R$ and $n\in\N$, the quantity  $\varphi^{2/k}(x)  n^{-2+2/k}$ in \ineq{sim2} cannot be replaced by $\varphi^{2\beta}(x) n^\gamma$ with $\beta > 1/k$. In fact, for any $\gamma_1, \gamma_2 \in\R$, if $\alpha + k\beta > r+1$, then one cannot replace the right-hand side of \ineq{sim2} by $c(k,r) \varphi^{2\alpha}(x) n^{\gamma_1} \w_k(f^{(r)}, \varphi^{2\beta}(x) n^{\gamma_2})$. Hence, \ineq{sim2} is exact in this sense (see also \cor{negativecor} below, with $x_0=\pm 1$, for a stronger negative result).

Since $\varphi(x) n \le \sqrt{2}$, $x\in S_n$,  and $\varphi(x) = o(1)$, $x\to \pm 1$, then
using the well know property of moduli of smoothness $\w_k(g, \lambda t) \le \lceil \lambda \rceil^k \w_k(g, t)$, $\lambda >0$,
one can immediately see that \ineq{sim2} is essentially better than (except for a few particular cases when it is as good as)   \ineq{interpol} if $(k,r)\in\Upsilon$. At the same time, if $(k,r)\not\in\Upsilon$, then \ineq{sim2} is still valid while \ineq{interpol} is not.

Because of this, it seems to be clear that it is advantageous to always discuss interpolatory estimates in the form similar to \ineq{sim2} instead of Teljakovski\u{i}-Gopengauz type estimates of type \ineq{interpol}.\footnote{In 2002, Trigub \cite{Trigub}*{Theorem 1} published exactly the same theorem as \cite{K-sim}*{Corollary 2-3.4} that appeared in 1996.}

It follows from  \ineq{sim2} that $|f(x)-P_n(x)| \varphi^{-2r}(x) \to 0$, $x\to \pm 1$, and hence $P_n$ from the statement of \thm{thk-sim} has to be such that
  $P_n^{(j)}(\pm 1)  =  f^{(j)}(\pm 1)$ for all $0\le j \le r$.
It turns out (see \thm{main} below) that  any polynomial $P_n$ satisfying these interpolation conditions as well as \ineq{classdir} also satisfies \ineq{sim2} and, moreover, it satisfies \ineq{sim2} for all moduli of smoothness of lower order as well. (After we  prepared this paper for publication, we had discovered that Bal\'{a}zs and Kilgore \cite{BK95} used a somewhat similar idea to provide an alternative proof of Gopengauz's result \cite{Gop}.)
  We emphasize that while it is a classical property of the moduli of smoothness that $\w_k(g,t) \le 2^{k-\ell}\w_\ell(g,t)$ for $1\le \ell\le k$, it is not true that $\phi_k(x,n) :=  \w_k(f^{(r)}, \varphi^{2/k}(x) n^{-2+2/k})$ can be estimated above by $\phi_\ell(x,n)$  because there exist functions $f\in C^r$ for which $\lim_{x\to \pm 1} \phi_k(x,n)/ \phi_\ell(x,n) =\infty$ (see \cite{KLSconspline}*{Remark 1.3}).

In several theorems below, we assume that a polynomial whose improved rate of approximation we are discussing satisfies the classical estimate \ineq{classdir} with a certain constant $A\ge 1$ instead of $c$. Instead of restating this inequality every time it is needed, we will say that a polynomial is from the class $\KK$: %has ``Property $\A$'':

\begin{definition} %\label{defKK}
Given $k\in\N$, $r,n\in\N_0$, $f\in C^r$ and $A\ge 1$, we say that a polynomial $P_n$ belongs to the class $\KK$ if $P_n\in\Pn$ and
\be \label{classKK}
|f(x)-P_n(x)|\le A\rho_n^r(x)\w_k(f^{(r)},\rho_n(x)),\quad x\in I .
\ee
\end{definition}

The following result on interpolatory pointwise estimates for {\em simultaneous} approximation is a consequence of a more general \thm{mainnew} below and is the main application of our main results to the type of problems that we discussed above.

\begin{theorem} \label{main}
 Let $k \in\N$, $r,n\in\N_0$ and  $f\in C^r$, and suppose that $P_n\in \KK$ is such that
 \[ % \label{2}
P_n^{(j)}(-1)=f^{(j)}(-1),\quad P_n^{(j)}(1)=f^{(j)}(1),\quad \text{for }\; 0\le j \le r .
\]
Then,  for all $0\le \nu \le r$ and  $1\le \ell \le k$,
\be   \label{1.8}
|f^{(\nu)}(x)-P_n^{(\nu)}(x)|\le c(k,r) A\varphi^{2(r-\nu)}(x)\w_\ell(f^{(r)},\varphi^{2/\ell}(x)n^{-2+ 2/\ell}),
\ee
if $1-n^{-2}\le| x|\le1$.
\end{theorem}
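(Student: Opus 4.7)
The plan is to exploit the fact that the hypotheses force $g:=f-P_n$ to vanish to order $r+1$ at $\pm 1$, and to combine this with \thm{thk-sim} through a comparison polynomial; by symmetry it is enough to consider $x\in[1-n^{-2},1]$. Since $\w_k(f^{(r)},\cdot)\le 2^{k-\ell}\w_\ell(f^{(r)},\cdot)$, the hypothesis $P_n\in\KK$ implies that $P_n$ satisfies \ineq{classKK} with $k$ replaced by $\ell$ and $A$ replaced by $2^{k-\ell}A$, so one may freely work with $\w_\ell$ throughout at the cost of absorbing a harmless factor into the constant.

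Next, I would invoke the simultaneous-approximation strengthening of \cite{K-sim}*{Corollary 2-3.4} that underlies \thm{thk-sim} (applied with $\ell$ in place of $k$) to produce an auxiliary $Q_n\in\Pn$ satisfying both the classical estimate \ineq{classKK} with $\w_\ell$ and, for every $0\le\nu\le r$ and $x\in S_n$,
\[
|f^{(\nu)}(x)-Q_n^{(\nu)}(x)|\le c(k,r)A\varphi^{2(r-\nu)}(x)\w_\ell\bigl(f^{(r)},\varphi^{2/\ell}(x)n^{-2+2/\ell}\bigr).
\]
Letting $x\to\pm 1$ forces $Q_n^{(j)}(\pm 1)=f^{(j)}(\pm 1)$ for $0\le j\le r$, so $R_n:=P_n-Q_n\in\Pn$ vanishes at $\pm 1$ to order $r+1$ and factors as $R_n(x)=(1-x^2)^{r+1}U_n(x)$ for some $U_n\in\Pi_{n-2r-2}$. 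Applying the triangle inequality to \ineq{classKK} for $P_n$ and $Q_n$ (both with $\w_\ell$) yields the global pointwise bound $|R_n(x)|\le c(k,r)A\rho_n^r(x)\w_\ell(f^{(r)},\rho_n(x))$ on all of $I$.

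The crux is then to transfer this pointwise bound on $R_n$ into the target pointwise bound on $R_n^{(\nu)}$ on $S_n$, after which the triangle inequality $|f^{(\nu)}-P_n^{(\nu)}|\le|f^{(\nu)}-Q_n^{(\nu)}|+|R_n^{(\nu)}|$ closes the argument. Via the Leibniz rule,
\[
R_n^{(\nu)}(x)=\sum_{j=0}^{\nu}\binom{\nu}{j}\bigl[(1-x^2)^{r+1}\bigr]^{(j)}U_n^{(\nu-j)}(x),
\]
and for $\nu\le r$ each prefactor $[(1-x^2)^{r+1}]^{(j)}$ is comparable to $\varphi^{2(r+1-j)}(x)$ on $S_n$. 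The expected derivative bound then follows once each $U_n^{(\nu-j)}$ is controlled by Dzyadyk/Bernstein-type polynomial inequalities extracted from the scalar bound on $R_n$ together with the vanishing at $\pm 1$. The main obstacle is precisely this transfer step: one must recover simultaneously the sharp endpoint weight $\varphi^{2(r-\nu)}(x)$ and the refined modulus argument $\varphi^{2/\ell}(x)n^{-2+2/\ell}$ from input phrased in terms of $\rho_n(x)$ and $\w_\ell(f^{(r)},\rho_n(x))$. This is exactly the polynomial-inequality machinery packaged in the forthcoming \thm{mainnew}, into which the hypotheses of the present theorem plug directly.
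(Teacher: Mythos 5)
Your proposal is correct in spirit at the very last line, but the entire route to that line is superfluous and contains a genuine gap. The paper derives \thm{main} by applying \thm{mainnew} \emph{directly to $P_n$} (with $x_0=1$, $m=r$ for $x$ near $1$, and $x_0=-1$, $m=r$ near $-1$), then observing that for $x\in S_n$ one has $|x\mp 1|\sim\varphi^2(x)$ and $\rho_n(x)\sim n^{-2}$, so the second line of \ineq{4nnn} becomes precisely \ineq{1.8}. No auxiliary polynomial $Q_n$ is needed, because \thm{mainnew} already accepts any $P_n\in\KK$ that interpolates at $x_0$; the burden of producing a ``good'' polynomial lies entirely in \thm{another}, not here.

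By contrast, you build $Q_n$ via \thm{thk-sim}, form $R_n=P_n-Q_n=(1-x^2)^{r+1}U_n$, and then try to propagate a classical $\rho_n$-type bound on $R_n$ to a sharp $\varphi$-weighted bound on $R_n^{(\nu)}$. That transfer is not a routine Dzyadyk/Bernstein calculation: the input bound $|R_n(x)|\le cA\rho_n^r(x)\w_\ell(f^{(r)},\rho_n(x))$ has $\rho_n(x)\sim n^{-2}$ \emph{constant} across $S_n$, whereas the target features the genuinely pointwise modulus argument $\varphi^{2/\ell}(x)n^{-2+2/\ell}$, which degenerates as $x\to\pm1$. Recovering that refinement requires the divided-difference/Marchaud-inequality argument of Section~3 (the proof of \thm{mainnew1}), not merely the factorization $R_n=(1-x^2)^{r+1}U_n$ together with Leibniz and a Dzyadyk-type estimate on $U_n$. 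You acknowledge this obstacle and then assert it is ``exactly the polynomial-inequality machinery packaged in \thm{mainnew}'' — but \thm{mainnew} is a statement about $f-P_n$, not about $R_n$, so if you are willing to invoke it the correct move is to apply it once to $P_n$ and drop $Q_n$, $R_n$, $U_n$ altogether. As written, the proposal neither completes the hands-on transfer step nor cleanly reduces to the cited theorem, so there is a real gap in the middle even though the final appeal lands on the right tool.
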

While \thm{main} seems stronger than an analogous result in the non simultaneous case (\ie the case $\nu=0$ in \ineq{1.8}),
 it is actually an added benefit of our approach that results of interpolatory type for simultaneous approximation of a function and its derivatives immediately follow from non-simultaneous ones because of the following
 rather well known lemma (we provide its short proof in Section~\ref{simultsect} for completeness).

\begin{lemma} \label{traux}
Let $k\in\N$,   $r,n \in\N_0$ and  $f\in C^r$. If $P_n\in \KK$,
 then, for all $x\in I$, we have
\be \label{tr1}
|f^{(\nu)}(x)- P_n^{(\nu)}(x)| \le c(k,r) A \rho_n^{r-\nu}(x) \w_k(f^{(r)}, \rho_n(x)), \quad 0\le \nu\le r ,
\ee
and
\be \label{tr2}
| P_n^{(k+r)}(x)| \le c(k,r) A \rho_n^{-k}(x)  \w_{k } (f^{(r)}, \rho_n(x)) .
\ee
\end{lemma}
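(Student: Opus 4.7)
The plan is to reduce both inequalities to a single application of a Dzyadyk--type pointwise Bernstein inequality for algebraic polynomials, after first subtracting off a good comparison polynomial whose derivatives we already understand.

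First, I would invoke the classical simultaneous approximation theorem of Brudnyi--Dzyadyk--Freud type (available, e.g., in Dzyadyk--Shevchuk's monograph): there exists $Q_n\in\Pn$ such that for all $x\in I$,
\[
|f^{(\nu)}(x)-Q_n^{(\nu)}(x)|\le c(k,r)\rho_n^{r-\nu}(x)\w_k(f^{(r)},\rho_n(x)),\quad 0\le\nu\le r,
\]
together with the higher--derivative bound
\[
|Q_n^{(k+r)}(x)|\le c(k,r)\rho_n^{-k}(x)\w_k(f^{(r)},\rho_n(x)).
\]
(Only the $\nu=0$ estimate is used to compare with $P_n$; the other bounds are already the targets \eqref{tr1}, \eqref{tr2} for $Q_n$.) Setting $R_n:=P_n-Q_n\in\Pn$, the triangle inequality together with the defining property \eqref{classKK} of the class $\KK$ and the $\nu=0$ bound above gives
\[
|R_n(x)|\le c(k,r)A\rho_n^{r}(x)\w_k(f^{(r)},\rho_n(x)),\quad x\in I.
\]

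Next, I would apply the standard pointwise Bernstein--Markov inequality of Dzyadyk: for any $R_n\in\Pn$ and $j\in\N$,
\[
|R_n^{(j)}(x)|\le c(j)\rho_n^{-j}(x)\sup_{|t-x|\le\rho_n(x),\,t\in I}|R_n(t)|.
\]
Combined with the elementary comparison $\rho_n(t)\sim\rho_n(x)$ valid on the neighbourhood $|t-x|\le\rho_n(x)$, together with monotonicity of $\w_k(f^{(r)},\cdot)$, this yields
\[
|R_n^{(\nu)}(x)|\le c(k,r)A\rho_n^{r-\nu}(x)\w_k(f^{(r)},\rho_n(x)),\quad 0\le\nu\le r,
\]
and
\[
|R_n^{(k+r)}(x)|\le c(k,r)A\rho_n^{-k}(x)\w_k(f^{(r)},\rho_n(x)).
\]
Adding these to the known bounds on the derivatives of $Q_n$ recovers \eqref{tr1} and \eqref{tr2}.

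The only real obstacle is invoking the Dzyadyk pointwise Bernstein inequality in the form stated and verifying the equivalence $\rho_n(t)\sim\rho_n(x)$ uniformly on the indicated neighbourhood, including the near--endpoint regime where $\rho_n(x)\sim n^{-2}$ (so that $|t-x|\le n^{-2}$ keeps $t$ within $O(n^{-2})$ of $x$, and the equivalence follows from $\rho_n(x)=n^{-1}\varphi(x)+n^{-2}\sim n^{-2}$ there). Everything else is a routine triangle-inequality bookkeeping, which is why the paper calls the lemma ``rather well known.''
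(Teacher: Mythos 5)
Your decomposition---subtract a comparison polynomial $Q_n$ with the known simultaneous-approximation properties, bound $R_n := P_n - Q_n$ by the triangle inequality, then differentiate $R_n$---is exactly the paper's strategy. However, the claimed ``pointwise Bernstein--Markov inequality of Dzyadyk'',
\[
|R_n^{(j)}(x)|\le c(j)\rho_n^{-j}(x)\sup_{|t-x|\le\rho_n(x),\ t\in I}|R_n(t)|,
\]
is not a valid inequality, and this is a genuine gap, not a citation issue. Take $x=0$, $n$ odd, and $R_n(y):=T_n(c_n n y)$ with $c_n:=n/(n+1)$, where $T_n$ is the Chebyshev polynomial; then $R_n\in\Pn$, and for $|t|\le\rho_n(0)=(n+1)/n^2$ we have $|c_n n t|\le 1$, so $\sup_{|t|\le\rho_n(0)}|R_n(t)|=1$, while $|R_n'(0)|=c_n n\,|T_n'(0)|=n^3/(n+1)\sim n^2$. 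Your inequality would instead give $|R_n'(0)|\le c\rho_n^{-1}(0)\sim n$, which fails for large $n$. This is precisely the obstruction the paper discusses after \thm{dz59}: a sup over a local $\rho_n$-neighbourhood gives no control of the polynomial on the rest of $I$, so the only available local bound is the rescaled Markov inequality, which carries an extra factor $n^{2j}$.

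What is actually true, and what the paper uses, is the globally weighted Dzyadyk--Lebed'--Brudnyi inequality (\lem{lebed}): for $\phi\in\Phi^\alpha$,
\[
\norm{\rho_n^{s+\nu}R_n^{(\nu)}\phi^{-1}(\rho_n)}{}\le c(\nu,s,\alpha)\norm{\rho_n^{s}R_n\phi^{-1}(\rho_n)}{} .
\]
Apply this with $s=0$, $\alpha=k+r$, and $\phi\in\Phi^{k+r}$ equivalent to $t\mapsto t^r\w_k(f^{(r)},t)$ (obtained via \lem{lem21f}). The crucial point is that the hypothesis here is the \emph{global} weighted bound $|R_n(t)|\le cA\rho_n^r(t)\w_k(f^{(r)},\rho_n(t))$ for all $t\in I$---precisely what your triangle-inequality step supplies---and not merely its restriction to a $\rho_n(x)$-neighbourhood of $x$. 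Replacing your local Bernstein step with this weighted inequality makes your proof coincide with the paper's.
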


We are now ready to start discussing our main results which are much more general and, because of that, a bit more technical.

\subsection{Main results}

We begin with the following theorem which shows that {\em any} polynomial from $\Pn$ approximating a function $f\in C^r$ so that the classical direct estimate  holds and interpolating $f$ and its derivatives at some point,
 has the best possible pointwise rate of  approximation of $f$   near that point.

\begin{theorem} \label{maingen}
 Let $k\in\N$, $x_0\in I$,  $r,n\in\N_0$,   $f\in C^r$, and $m\in\N_0$ is such that $m\le r$. If $P_n\in \KK$ satisfies
 \be \label{2g}
P_n^{(j)}(x_0)=f^{(j)}(x_0), \quad \text{for }\; 0\le j \le m,
\ee
then, for $x\in I$,
\begin{align}  \label{4g}
|&f(x)-P_n(x)| \nonumber \\
& \le c(k,r) A
\begin{cases}
|x-x_0|^{m+1} \rho_n^{r-m-1}(x)  \omega_k(f^{(r)}, \rho_n(x)) , & \text{if }\; m \le r-1 ,\\
|x-x_0|^r \omega_k(f^{(r)},|x-x_0|^{1/k} \rho_n^{1-1/k}(x)) , & \text{if }\; m=r .
\end{cases}
\end{align}
\end{theorem}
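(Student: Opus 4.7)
Put $g:=f-P_n$; the interpolation hypothesis \eqref{2g} gives $g^{(j)}(x_0)=0$ for $0\le j\le m$, and Lemma~\ref{traux} supplies the key derivative estimates \eqref{tr1} and \eqref{tr2}. I would split the argument according to the size of $|x-x_0|$ relative to $\rho_n(x)$. When $|x-x_0|\ge\rho_n(x)$, the classical bound \eqref{classKK} already implies \eqref{4g}: for $m\le r-1$, factor $\rho_n^r=\rho_n^{r-m-1}\rho_n^{m+1}$ and use $\rho_n(x)\le|x-x_0|$; for $m=r$, note that $\rho_n(x)\le|x-x_0|^{1/k}\rho_n^{1-1/k}(x)$ in this regime and combine monotonicity of $\omega_k$ with $\rho_n^r(x)\le|x-x_0|^r$. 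So I may assume $|x-x_0|\le\rho_n(x)$, where the standard fact $\rho_n(t)\sim\rho_n(x)$ holds uniformly for $t$ between $x_0$ and $x$.

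\textbf{The case $m\le r-1$.} Taylor's formula with integral remainder (using $g\in C^r$, $g^{(j)}(x_0)=0$ for $0\le j\le m$, and $m+1\le r$) gives
\[
g(x)=\frac{1}{m!}\int_{x_0}^x (x-t)^m\, g^{(m+1)}(t)\,dt.
\]
Apply \eqref{tr1} with $\nu=m+1$ to bound $|g^{(m+1)}(t)|\le c(k,r)A\rho_n^{r-m-1}(t)\omega_k(f^{(r)},\rho_n(t))$ and use $\rho_n(t)\sim\rho_n(x)$ to pull this bound outside the integral; the remaining $\int_{x_0}^x(x-t)^m\,dt$ contributes $|x-x_0|^{m+1}/(m+1)!$ and yields the first branch of \eqref{4g}.

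\textbf{The case $m=r$ (the crux).} Taylor to order $r-1$ gives
\[
g(x)=\frac{1}{(r-1)!}\int_{x_0}^x (x-t)^{r-1}\, g^{(r)}(t)\,dt,
\]
so it suffices to establish the pointwise bound $|g^{(r)}(t)|\le c(k,r)A\,\omega_k\bigl(f^{(r)},\,|x-x_0|^{1/k}\rho_n^{1-1/k}(x)\bigr)$ for $t$ between $x_0$ and $x$; back-integrating $(x-t)^{r-1}$ then produces $|x-x_0|^r/r!$ and the second branch of \eqref{4g}. The main obstacle is that the direct estimate \eqref{tr1} only yields $\omega_k(f^{(r)},\rho_n(x))$, which is too large when $|x-x_0|\ll\rho_n(x)$. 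To refine it, I would exploit $g^{(r)}(x_0)=0$ together with the Markov--Bernstein-type bound \eqref{tr2} on $P_n^{(r+k)}$ via the degree-$(k-1)$ Taylor polynomial
\[
T(t):=\sum_{j=0}^{k-1}\frac{P_n^{(r+j)}(x_0)}{j!}(t-x_0)^j,
\]
which satisfies $T(x_0)=P_n^{(r)}(x_0)=f^{(r)}(x_0)$. Decomposing $g^{(r)}(t)=[f^{(r)}(t)-T(t)]-[P_n^{(r)}(t)-T(t)]$, the polynomial piece is handled by Taylor's remainder and \eqref{tr2}, giving
\[
|P_n^{(r)}(t)-T(t)|\le c(k,r)A\bigl(|t-x_0|/\rho_n(x)\bigr)^{k}\omega_k(f^{(r)},\rho_n(x)).
\]
The ``functional'' piece $f^{(r)}(t)-T(t)$ requires a Whitney-type argument: compare $T$ with a near-best polynomial approximant of $f^{(r)}$ of degree $\le k-1$ on $[x_0,t]$ (whose error is $\le c\omega_k(f^{(r)},|t-x_0|)$) and bound the resulting polynomial difference by its value at $x_0$ and by \eqref{tr1} elsewhere. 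Balancing the two scales $|t-x_0|$ and $\rho_n(x)$ produces the geometric mean $|x-x_0|^{1/k}\rho_n^{1-1/k}(x)$ inside $\omega_k$, and this interpolation between the two derivative-bounds of Lemma~\ref{traux} is the essential new technical content of the proof.
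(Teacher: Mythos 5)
Your setup, the reduction to $|x-x_0|\le\rho_n(x)$, and the case $m\le r-1$ are all correct, and they essentially coincide with what the paper does (the paper works with divided differences $[x,y_0,\dots,y_m;g]$ and the mean-value formula $[x,y_0,\dots,y_m;g]=g^{(m+1)}(\theta)/(m+1)!$, but when the knots all coalesce to $x_0$ this is exactly your Taylor remainder, so there is no real difference). You also correctly identify $m=r$ as the crux and correctly reduce it (for $r\ge 1$) to bounding $|g^{(r)}(t)|$ on the segment joining $x_0$ to $x$; since $g^{(r)}(x_0)=0$ this is the same as bounding $\omega_1(g^{(r)},|x-x_0|;J_n)$ where $J_n=[x-\rho_n(x),x+\rho_n(x)]\cap I$, which is precisely the quantity the paper arrives at via \eqref{nice}. (Minor remark: when $r=0$ the factor $1/(r-1)!$ is undefined, but one just observes $g(x)=g(x)-g(x_0)$ directly.)

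The gap is in the mechanism you propose to pass from $\omega_1(g^{(r)},\cdot)$ to $\omega_k(f^{(r)},|x-x_0|^{1/k}\rho_n^{1-1/k}(x))$. Your decomposition $g^{(r)}=(f^{(r)}-T)-(P_n^{(r)}-T)$ with $T$ the degree-$(k-1)$ Taylor polynomial of $P_n^{(r)}$ at $x_0$ controls the polynomial piece correctly: the Taylor remainder formula together with \eqref{tr2} gives $|P_n^{(r)}(t)-T(t)|\le cA(|t-x_0|/\rho_n(x))^k\omega_k(f^{(r)},\rho_n(x))$, and with $\eta:=|x-x_0|^{1/k}\rho_n^{1-1/k}(x)$ and the property $u^{-k}\omega_k(u)\downarrow$ this is indeed $\le cA\omega_k(f^{(r)},\eta)$. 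But the ``functional'' piece $f^{(r)}-T$ does not close. Writing $f^{(r)}-T=(f^{(r)}-Q)+(Q-T)$ with $Q$ a degree-$(k-1)$ Whitney near-best approximant of $f^{(r)}$ on $[x_0,x]$, the first term is $\le c\,\omega_k(f^{(r)},|x-x_0|)\le c\,\omega_k(f^{(r)},\eta)$, but the polynomial $Q-T$ has no useful a priori control: you only know its value at the single point $x_0$ (where $Q(x_0)-T(x_0)=Q(x_0)-f^{(r)}(x_0)$ is small), and any attempt to bound it elsewhere on $[x_0,x]$ by writing $Q-T=(Q-f^{(r)})+g^{(r)}+(P_n^{(r)}-T)$ reintroduces $g^{(r)}$ and is circular (the resulting inequality has $\|g^{(r)}\|$ on both sides with coefficient $1$). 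There is no obvious bootstrap, and ``by \eqref{tr1} elsewhere'' only yields the large modulus $\omega_k(f^{(r)},\rho_n(x))$, which is exactly what one is trying to sharpen.

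What the paper actually does at this point — and what your sketch omits — is the Marchaud inequality. From \eqref{tr2} one first gets $\omega_k(g^{(r)},t;J_n)\le cA\,\omega_k(f^{(r)},t)$ for $0<t\le 2\rho_n(x)$ (you have all the ingredients for this), and then Marchaud gives
\[
\omega_1(g^{(r)},t;J_n)\le c\,t\int_t^{|J_n|}\frac{\omega_k(g^{(r)},u;J_n)}{u^2}\,du + c\,t\,|J_n|^{-1}\norm{g^{(r)}}{J_n}.
\]
Splitting the integral at the intermediate scale $\eta$ and using $u^{-k}\omega_k(u)\downarrow$ on $[\eta,\rho_n(x)]$ and $\omega_k\uparrow$ on $[t,\eta]$ is exactly what produces the geometric mean $\eta=|x-x_0|^{1/k}\rho_n^{1-1/k}(x)$ inside $\omega_k$ after setting $t=|x-x_0|$. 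That step is where the real work happens, and I don't see how your $T$-decomposition can replace it.
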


We remark that %\ineq{4g} is much weaker than \ineq{classKK} in \defin{defKK} if $x$ is ``far'' from $x_0$. More precisely,
\ineq{4g} is stronger than \ineq{classKK}  if  $|x-x_0| = o(\rho_n(x))$, $x\to x_0$, and is as good as or weaker otherwise. Hence, \ineq{4g} only becomes useful when $x$ is sufficiently close to $x_0$.   Also, if $|x-x_0| \le \rho_n(x)$, then $\rho_n(x) \sim \rho_n(x_0)$, and so $\rho_n(x)$ in \ineq{4g} can be replaced by $\rho_n(x_0)$ for these $x$.

Using  \lem{traux} and the inequality $\omega_k(f^{(r)},t)\le 2^{k-\ell}\omega_\ell(f^{(r)},t)$, $1\le \ell \le k$,  we conclude that \thm{maingen} immediately implies  the following  result on simultaneous approximation.

\begin{theorem} \label{mainnew}
 Let $k \in\N$, $x_0\in I$,  $r,n\in\N_0$,   $f\in C^r$, and $m\in\N_0$ is such that $m\le r$. If $P_n\in \KK$ satisfies \ineq{2g} then,
  for all $0\le \nu \le r$, $1\le \ell \le k$ and $x\in I$, we have
\begin{align}  \label{4nnn}
| & f^{(\nu)}(x)  -P_n^{(\nu)}(x)| \nonumber \\
 & \le c(k,r) A
\begin{cases}
|x-x_0|^{\sigma} \rho_n^{r-\nu-\sigma}(x)  \omega_\ell(f^{(r)}, \rho_n(x)) , & \text{if }\; m \le r-1 ,\\
|x-x_0|^{r-\nu} \omega_\ell(f^{(r)},|x-x_0|^{1/\ell} \rho_n^{1-1/\ell}(x)) , & \text{if }\; m=r ,
\end{cases}
\end{align}
where $\sigma := \max\{ m-\nu+1, 0\}$.
\end{theorem}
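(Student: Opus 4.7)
The plan is to reduce \thm{mainnew} to \thm{maingen} applied to the derivative pair $(f^{(\nu)}, P_n^{(\nu)})$, using \lem{traux} to establish the required class membership and the basic inequality $\omega_k(g,t) \le 2^{k-\ell}\omega_\ell(g,t)$ (valid for $1 \le \ell \le k$) to pass from the order $k$ modulus in the hypothesis to the order $\ell$ modulus in the conclusion. My first move is to observe that this inequality yields $\K_n(f,A,k,r) \subset \K_n(f,2^{k-\ell}A,\ell,r)$, so that after absorbing the factor $2^{k-\ell}$ into the final constant $c(k,r)$, I may work throughout with $\omega_\ell$ in place of $\omega_k$.

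Next, I would apply \lem{traux} (with $\ell$ in place of $k$) to obtain
\[
|f^{(\nu)}(x) - P_n^{(\nu)}(x)| \le c(k,r) A \rho_n^{r-\nu}(x)\, \omega_\ell(f^{(r)}, \rho_n(x)), \qquad 0 \le \nu \le r,\ x \in I.
\]
Since $\rho_n(x)$ and $\rho_{n-\nu}(x)$ are equivalent uniformly on $I$ with constants depending only on $r$, this says precisely that $P_n^{(\nu)} \in \K_{n-\nu}(f^{(\nu)}, c(k,r)A, \ell, r-\nu)$. When $\nu > m$ we have $\sigma = 0$, and the displayed bound is already the desired conclusion \ineq{4nnn}, so the proof is complete in that subcase. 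When $\nu \le m$, the hypothesis \ineq{2g} passes to derivatives, yielding $(P_n^{(\nu)})^{(j)}(x_0) = (f^{(\nu)})^{(j)}(x_0)$ for $0 \le j \le m - \nu$. I then invoke \thm{maingen} with $(f, P_n, k, r, m)$ replaced by $(f^{(\nu)}, P_n^{(\nu)}, \ell, r-\nu, m-\nu)$: its first alternative (in which $m - \nu + 1 = \sigma$ and $(r-\nu) - (m-\nu) - 1 = r - \nu - \sigma$) delivers \ineq{4nnn} when $m \le r-1$, while its second alternative delivers it when $m = r$.

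There is no substantial obstacle; the argument is essentially bookkeeping. The only points requiring a moment's thought are verifying $\rho_n \sim \rho_{n-\nu}$ uniformly on $I$ (routine for $0\le\nu\le r$) and checking that the parameter ranges in \thm{maingen} are respected after the shift $r \to r-\nu$, $m \to m-\nu$, $k \to \ell$; both are immediate from the definitions.
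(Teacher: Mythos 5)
Your proposal is correct and matches the paper's intended argument, which the authors compress into a single sentence: apply \lem{traux} to show $P_n^{(\nu)}$ belongs to the appropriate class for $f^{(\nu)}$, replace $\omega_k$ by $\omega_\ell$ via the standard inequality, and then invoke \thm{maingen} for the shifted parameters. Every step you spell out is the right one, and your identification of the two cases ($\nu>m$ handled directly by \lem{traux}, $\nu\le m$ by the shifted \thm{maingen}) is exactly the bookkeeping that makes the one-liner go through.

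One small remark on streamlining. The detour through $\K_{n-\nu}$ and the accompanying check that $\rho_n\sim\rho_{n-\nu}$ is unnecessary, and it even creates a spurious edge case when $n<\nu$ (where $\K_{n-\nu}$ is undefined). Since $\Pi_{n-\nu}\subset\Pi_n$ and \lem{traux} already produces the bound with $\rho_n$ itself, the conclusion of \lem{traux} says directly that $P_n^{(\nu)}\in\K_n(f^{(\nu)},c(k,r)A,\ell,r-\nu)$, and \thm{maingen} can be applied with the same $n$. This avoids changing the index, keeps $\rho_n$ throughout, and sidesteps the equivalence argument altogether.
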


Clearly, estimates \ineq{4g} and \ineq{4nnn} in Theorems~\ref{maingen} and \ref{mainnew} cannot be improved if $m\le r-1$. Indeed, if $|f^{(\nu)}(x)-P_n^{(\nu)}(x)| = o(|x-x_0|^{\sigma})$, $x\to x_0$, then
$f^{(\nu+\sigma)}(x_0) = P_n^{(\nu+\sigma)}(x_0)$ with $\nu+\sigma =  \max\{ m +1, \nu\} \ge m+1$, which does not have to be the case by \ineq{2g}.
The fact that no improvements can be made in the case $m=r$ either follows
 from the following theorem
 (see also simpler but weaker \thm{weaknegative} below
as well as discussions in Section~\ref{secweak}).

\begin{theorem}[negative theorem]\label{negative}
Let $k\in\N$,  $r\in\N_0$, $x_0\in I$, and let a positive function $\vare \in C(0,1]$ be such that $\lim_{x\to 0^+} \vare(x)=0$. Then,
there is a function $F\in C^{r} $, such that for any algebraic polynomial $P$ we have
\[ %\label{01nnn}
\limsup_{x\to x_0}\frac{|F(x)-P(x)|}{\omega_k\left(F^{(r)},\vare(|x-x_0|) |x-x_0|^{(r+1)/k}\right)}=\infty.
\]
\end{theorem}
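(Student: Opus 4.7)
The plan is to construct the required ``bad'' function $F \in C^r$ explicitly as a lacunary series of smooth, well-separated bumps concentrated near $x_0$, with the scales tuned to the given function $\vare$. By translation assume $x_0 = 0$, and by symmetry consider the limit $x \to 0^+$ only (the boundary cases $x_0 = \pm 1$ are handled by truncating the construction to one side of $x_0$). Fix a bump $\psi \in C^\infty(\R)$ with $\supp \psi = [1,2]$ and $\psi > 0$ on $(1,2)$. Select a rapidly decreasing sequence $s_j \downarrow 0$ with $s_{j+1} \le s_j/4$ (to be specified recursively below) together with positive amplitudes $c_j$, and set
\[
F(x) := \sum_{j=1}^\infty c_j\, s_j^{r+1}\, \psi(x/s_j).
\]
The summands $\phi_j(x) := c_j s_j^{r+1} \psi(x/s_j)$ have pairwise disjoint supports $[s_j,2s_j]$, and $F$ vanishes identically on every gap $[2s_{j+1}, s_j]$; since these gaps accumulate at $0$, $F$ is $C^\infty$-flat at $0$ with $F^{(i)}(0) = 0$ for every $i \ge 0$. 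The condition $c_j s_j \to 0$ is sufficient to ensure $F \in C^r(I)$ after a global cutoff.

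The scale separation forces the $k$-th symmetric difference $\Delta_h^k F^{(r)}$ at any point to pick up at most one summand $\phi_{j'}^{(r)}$ whenever $h \le s_{j'}/(2k)$. Since $\phi_j^{(r)}(x) = c_j s_j\, \psi^{(r)}(x/s_j)$, a scaling computation yields the key modulus estimate
\[
\w_k(F^{(r)}, h) \le C\, \sup_{j'} c_{j'} s_{j'} \min\bigl\{1,\, (h/s_{j'})^k\bigr\}.
\]
Now fix an arbitrary polynomial $P$. Since $F$ is $C^\infty$-flat at $0$, the Taylor expansion of $P$ at $0$ (which equals $P$ itself) yields $|P(x)| \le C_P |x|^{m(P)}$ near $0$, where $m(P)$ is the least order of a non-vanishing derivative of $P$ at $0$ (or $m(P)=\infty$ if $P\equiv 0$). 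I use two test sequences: the ``peaks'' $y_j := 3s_j/2$, at which $F(y_j) = c_j s_j^{r+1}\psi(3/2)$, and the ``gap points'' $z_j := s_j/2$, at which $F(z_j) = 0$. At $y_j$, $|F(y_j)-P(y_j)| \ge \tfrac12 c_j s_j^{r+1}\psi(3/2)$ provided $c_j s_j^{r+1} \ge 2C_P s_j^{m(P)}$; at $z_j$, $|F(z_j)-P(z_j)| = |P(z_j)| \gtrsim s_j^{m(P)}$ whenever $P \not\equiv 0$.

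Exploiting $\lim_{t\to 0^+}\vare(t)=0$, the scales $s_j$ are chosen recursively so small that $\vare(s_j) \le 2^{-j}$. Writing $h_j^* := \vare(s_j)\, s_j^{(r+1)/k}$ for the argument of $\w_k$ at $t = s_j$ in the denominator of the theorem, the modulus estimate above reduces $\w_k(F^{(r)}, h_j^*)$ to a single dominant term $c_{j'} s_{j'} (h_j^*/s_{j'})^k$ (for a specific $j'$ near $j$), and the extra factor $\vare(s_j)^k$ produces the desired blow-up of the ratio along $\{y_j\}\cup\{z_j\}$. The hard part is the simultaneous tuning of the amplitudes $c_j$ and scales $s_j$ so that $c_j s_j \to 0$ (for $C^r$ regularity and for the control of $\w_k$) while $c_j s_j^{r+1}$ nevertheless dominates $C_P s_j^{m(P)}$ on the peak sequence when $m(P)\le r$; the complementary range $m(P) \ge r+1$ is handled on the gap sequence, where the modulus is a priori much smaller than $s_j^{m(P)}$. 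Since $\deg P$ is finite for each fixed $P$, iterated Markov's inequalities on intervals of length $\sim s_j$ bound $P$'s coefficients and preclude simultaneous matching of $F$ at arbitrarily fine scales, so combining both sub-cases yields, for every polynomial $P$, a subsequence of $\{y_j, z_j\}$ along which $|F(x)-P(x)|/\w_k(F^{(r)}, \vare(|x|)|x|^{(r+1)/k}) \to \infty$, as required.
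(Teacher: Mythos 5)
Your construction has a genuine gap that appears for $k\ge 2$, and it is precisely the difficulty the paper's more intricate Case~(ii) is designed to circumvent.

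The problem is that the local value-to-modulus ratio of a \emph{smooth} bump is scale-invariantly bounded. At the peak $y_j\sim s_j$ you have $|F(y_j)|\sim c_j s_j^{r+1}$, while the bump $\phi_j^{(r)}=c_j s_j\,\psi^{(r)}(\cdot/s_j)$ is $C^\infty$, so for $h\le s_j$ its $k$-th modulus is genuinely of order $c_j s_j(h/s_j)^k$ (the $j'=j$ term in your sup already gives this as a lower bound). Plugging in $h_j^*=\vare(s_j)s_j^{(r+1)/k}$ yields
\[
\frac{|F(y_j)|}{\w_k(F^{(r)},h_j^*)}\ \lesssim\ \frac{c_j s_j^{r+1}}{c_j s_j^{\,1-k}\vare(s_j)^k s_j^{\,r+1}}\ =\ \frac{s_j^{\,k-1}}{\vare(s_j)^k}.
\]
The amplitudes $c_j$ cancel, and $\vare$ is \emph{given}, not chosen: for $k\ge2$ and a slowly decaying $\vare$ (say $\vare(t)=1/\ln(1/t)$), this quantity tends to $0$, not $\infty$. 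So the peak sequence produces no blow-up, and the gap sequence cannot help when $P\equiv 0$ (there $|F(z_j)-P(z_j)|=0$). Making $c_j\to\infty$ does not help, and replacing $\psi$ by a rougher ($C^{k+r-1}$) profile only changes the exponent on $s_j$ in your favor partially; a single bump with disjoint support always has a bounded (indeed vanishing, for slowly decaying $\vare$) local ratio. Your sentence ``the extra factor $\vare(s_j)^k$ produces the desired blow-up'' is therefore backwards for $k\ge 2$.

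What the paper does instead is twofold, and both ingredients are essential. First, the target $k$-th modulus $\omega$ is built with alternating ``flat'' pieces (constant on $[x_{2j+2},\wx_{2j+1})$) and power-$k$ pieces, normalized so that $\omega(x^{(r+1)/k})=o(x^r)$ along the test sequence, which forces any competing polynomial $P$ to vanish to order $r+1$ at $0$; a smooth bump cannot have such a modulus because its $\w_k$ is $\asymp t^k$ all the way down. Second, $F$ is produced by an integral ($f(x)=\frac{1}{(k-2)!}\int_x^1 x(u-x)^{k-2}\omega(u)u^{-k}\,du$) that \emph{accumulates} contributions from all coarser scales, so that $F(x)/x^{r+1}\to\infty$ while $F(x)/x^r\to0$; this is exactly the property your disjoint-support bumps lack, since at $y_j$ only the $j$-th bump contributes and $F(y_j)/y_j^{r+1}\sim c_j$ is tied down by the regularity constraint $c_js_j\to0$. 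The case $2\le k\le r$ is then reduced to $k=r+1$ via the Marchaud inequality, and $k=1$ is handled separately (your outline does essentially work for $k=1$, which is consistent). I would suggest revisiting the case $k\ge 2$ with a modulus-first construction rather than a bump-first one.
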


Using $\w_k(F^{(r)}, \lambda t) \le 2^k \lambda^k \w_k(F^{(r)}, t)$, $\lambda \ge 1$, which implies, for $\beta \le (r+1)/k$,
\begin{align*}
\w_k\big(   F^{(r)}  &  ,   \vare(|x-x_0|)  |x-x_0|^{\beta}\big) \\
& \le  c |x-x_0|^{k\beta - r- 1}
\w_k\big(F^{(r)},\vare(|x-x_0|) |x-x_0|^{(r+1)/k}\big) ,
\end{align*}
 we immediately get the following corollary.

\begin{corollary} \label{negativecor}
Let $k\in\N$,  $r\in\N_0$, $x_0\in I$, and let a positive function $\vare \in C(0,1]$ be such that $\lim_{x\to 0^+} \vare(x)=0$. Then,
there is a function $F\in C^{r} $, such that for any algebraic polynomial $P$ and any $\alpha\ge 0$ and $\beta\in \R$ such that $\alpha + k\beta = r+1$ we have
\[ %\label{01nnncor}
\limsup_{x\to x_0}\frac{|F(x)-P(x)|}{|x-x_0|^\alpha  \omega_k\left(F^{(r)},\vare(|x-x_0|) |x-x_0|^{\beta}\right)}=\infty.
\]
\end{corollary}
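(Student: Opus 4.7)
The plan is to deduce Corollary \ref{negativecor} directly from Theorem \ref{negative} by a scaling argument on the modulus of smoothness, with no new construction of $F$ required. First I would observe that the hypotheses $\alpha \ge 0$ and $\alpha + k\beta = r+1$ force $\beta \le (r+1)/k$. This means that for $x$ sufficiently close to $x_0$ (so that $|x-x_0| < 1$) the factor $\lambda := |x-x_0|^{\beta - (r+1)/k}$ satisfies $\lambda \ge 1$, placing us squarely in the regime where the standard scaling bound $\omega_k(g, \lambda t) \le 2^k \lambda^k \omega_k(g,t)$ applies.

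Next I would apply this scaling with $t = \vare(|x-x_0|)\,|x-x_0|^{(r+1)/k}$ and $\lambda t = \vare(|x-x_0|)\,|x-x_0|^{\beta}$ to obtain
\[
\omega_k\bigl(F^{(r)},\vare(|x-x_0|)\,|x-x_0|^{\beta}\bigr) \le 2^k |x-x_0|^{k\beta - r - 1}\,\omega_k\bigl(F^{(r)},\vare(|x-x_0|)\,|x-x_0|^{(r+1)/k}\bigr),
\]
which is precisely the displayed inequality given just above the corollary statement. Multiplying by $|x-x_0|^{\alpha}$ and using $\alpha + k\beta - r - 1 = 0$ kills the power of $|x-x_0|$ on the right, yielding
\[
|x-x_0|^{\alpha}\,\omega_k\bigl(F^{(r)},\vare(|x-x_0|)\,|x-x_0|^{\beta}\bigr) \le 2^k\, \omega_k\bigl(F^{(r)},\vare(|x-x_0|)\,|x-x_0|^{(r+1)/k}\bigr).
\]

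Finally, I would let $F$ be the function produced by Theorem \ref{negative} (which depends only on $k$, $r$, $x_0$ and $\vare$, not on $\alpha$ or $\beta$). For an arbitrary algebraic polynomial $P$, the ratio appearing in Corollary \ref{negativecor} is then bounded below by $2^{-k}$ times the ratio appearing in Theorem \ref{negative}; since the latter has $\limsup_{x\to x_0} = \infty$, so does the former. The same $F$ works simultaneously for every admissible pair $(\alpha,\beta)$, finishing the argument.

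I do not anticipate any real obstacle: the whole proof is bookkeeping around one application of the monotonicity inequality for $\omega_k$. The only point requiring any care is the direction of that inequality, namely $\lambda \ge 1$, which is precisely what the constraint $\alpha \ge 0$ guarantees via $\beta \le (r+1)/k$.
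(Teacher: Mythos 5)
Your proposal is correct and is essentially identical to the paper's own argument: the authors likewise derive the corollary by noting $\alpha\ge 0$ forces $\beta\le (r+1)/k$, applying $\omega_k(F^{(r)},\lambda t)\le 2^k\lambda^k\omega_k(F^{(r)},t)$ with $\lambda=|x-x_0|^{\beta-(r+1)/k}\ge 1$, and then using $\alpha+k\beta=r+1$ to cancel the resulting power of $|x-x_0|$, reducing everything to Theorem \ref{negative}.
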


In order to discuss the general results that yield somewhat more general  estimates than those in Theorems~\ref{maingen} and \ref{mainnew}, we need to recall some definitions.
Given a collection of $s$ points $Y=\{y_j\}_{j=0}^{s-1}$  with possible repetitions, $y_0\le y_1 \le \dots \le y_{s-1}$, we recall that, for each $j$, the multiplicity $m_j$ of $y_j$ is the number of $y_i$ such that $y_i=y_j$. Also, we let $l_j$ be the number of $y_i=y_j$ with $i\le j$. Suppose that a function $f$ is defined at all points in $Y$ and, moreover, for each $y_j \in Y$, $f^{(l_j-1)}(y_j)$ is defined as well. In other words, $f$ has $m_j-1$ derivatives at each point that has multiplicity $m_j$. Then, there is a unique Lagrange-Hermite polynomial $L_{s-1}(\cdot; f, Y)$ of degree $\le s-1$ that satisfies
\be \label{hermite}
L_{s-1}^{(l_j-1)}(y_j; f, Y) = f^{(l_j-1)}(y_j), \quad \text{for all }\; 0\le j \le s-1 .
\ee

It will also be convenient for us to think about $Y$ as a set of distinct points   $z_0<z_1 <\dots < z_{\mu-1}$   with multiplicities $m_0, \dots, m_{\mu-1}$, \ie
\[
Y = \left\{ \underbrace{z_0, \dots, z_0}_{m_0}, \underbrace{z_1, \dots, z_1}_{m_1}, \dots, \underbrace{z_{\mu-1}, \dots, z_{\mu-1}}_{m_{\mu-1}} \right\} .
\]
Then, $s= m_0 + \dots + m_{\mu-1}$,
\[
y_i = z_j , \quad \text{for }\;  \sum_{l=0}^{j-1} m_l \le i < \sum_{l=0}^{j} m_l , \quad 0\le j \le \mu-1,
\]
where $\sum_{l=0}^{-1} m_l := 0$,
 and the polynomial $L_{s-1}$ satisfies
\[ % \label{her}
L_{s-1}^{(j)}(z_i; f, Y) = f^{(j)}(z_i), \quad \text{for all}\quad  0\le i \le \mu-1  \andd  0\le j\le m_i-1.
\]
From now on, $Z(Y) =\{z_i\}_{i=0}^{\mu-1}$ will always denote the subset of all {\em distinct} points in $Y$.
We also often use the notation
\be \label{lambda}
%\Lambda(Y) :=
\Lambda_r(Y) :=   \min_{0\le j \le s-r-2}  (y_{j+r+1}-y_j) , \quad \text{if }\; s\ge r+2,
\ee
and
\be \label{delta}
\delta(Y):= \delta(Z(Y)) :=  \min_{0\le i \le \mu-2} (z_{i+1}-z_i) , \quad \text{if }\; \mu\ge 2 .
\ee
Some explanations are needed in order to understand what these constants represent and conditions that are put on the set $Y$ if it is assumed that they are bounded away from zero. Condition $\Lambda_r(Y) >0$ means that {\em at most $r+1$} consecutive points from $Y$ are allowed to coalesce and so, in particular, the Lagrange-Hermite polynomial $L_{s-1}(\cdot; f, Y)$ is well defined if $f$ is assumed to have $r$ derivatives on $I$.
If $\Lambda_r(Y) \ge \lambda$, then the diameter of any set of $r+2$ consecutive points from $Y$ is at least $\lambda$
or, equivalently, points $z_i, \dots, z_{i+\ell}$ can all lie inside an interval of length $\lambda$ only if $m_i+\dots + m_{i+\ell} \le r+1$. This guarantees that the rate of approximation of $f$ by $L_{s-1}(\cdot; f, Y)$ will not get out of control.
To give a simple example, suppose that  $Y$ consists of two distinct points $y_0=0$ and $y_1=\epsilon$,
$f(x) = x_+^\gamma$, $\gamma>0$, and let $L_{1}(\cdot; f, Y)$ be the linear polynomial interpolating $f$ at $y_0$ and $y_1$. Then, $\norm{f}{}=1$ and
\[
\lim_{\epsilon \to 0^+} \norm{f-L_1(\cdot; f, Y)}{} =
\begin{cases}
1 , & \text{if }\; \gamma \ge 1 ,\\
\infty , & \text{if }\; 0<\gamma < 1 .
\end{cases}
\]
In other words, while it is acceptable for $y_0$ and $y_1$ to be close to each other if $\norm{f'}{L_\infty(I)} <\infty$ (the case corresponding to $r=1$), it may cause problems if $\norm{f'}{L_\infty(I)} =\infty$ (the case corresponding to $r=0$).

We also note that, for any $Y$ with at least $r+2$ points, $\Lambda_r(Y) \ge \delta(Y)$ and, in general, $\delta(Y)$ can be much smaller than $\Lambda_r(Y)$.

If  $s\le r+1$ or $\mu\le 1$ (\ie conditions on $s$ and $\mu$ in \ineq{lambda} and \ineq{delta}  are not satisfied), then we will not need to put any restrictions on the sets $Y$ or $Z(Y)$, and so they can be arbitrary subsets of $I$ with $s$ or $\mu$ points, respectively.

We will also need to refer to various subsets of points from $Y$ which are closest to a point $x\in I$, and so we introduce the following notation. Given $Y=\{y_j\}_{j=0}^{s-1}$ (recall that points $y_j$'s are allowed to coalesce) and $x\in I$, we renumber the points $y_j$ so that the distance from these points to $x$ becomes nondecreasing, \ie we   let $\sigma = (\sigma_0, \dots, \sigma_{s-1})$ be a (in general, non-unique) permutation of $(0, \dots, s-1)$ such that
\[
|x-y_{\sigma_{j-1}}| \le |x-y_{\sigma_j}| , \quad \text{for all }\; 1\le j \le s-1 .
\]
Clearly, $\sigma$ as well as all $\sigma_j$'s depend on $x$, and we use the notation ``$\sigma(x)$'' and ``$\sigma_j(x)$'' to emphasize this fact.
We also denote
\[ % \label{prodDD}
\DD_m(x, Y) := \prod_{j=0}^m |x-y_{\sigma_j(x)}| , \quad 0\le m \le s-1 .
\]
Thus, for example, $\DD_0(x,Y) = \dist(x, Y)$, $\DD_{s-1}(x,Y) = \prod_{j=0}^{s-1}|x-y_j|$ and, if a point $y_j$ has multiplicity $m_j$, then
$\DD_\nu(x,Y)=|x-y_j|^{\nu+1}$, $0\le \nu \le m_j-1$, if $x$ is sufficiently close to $y_j$.

\begin{theorem} \label{mainnew1}
Let
 $k,s\in\N$,  $r,n\in\N_0$,   $f\in C^r$, and let $Y=\{y_j\}_{j=0}^{s-1} \subset I$ be such that, if $s\ge r+2$, then $\Lambda_r(Y)$ is strictly positive.
If  $P_n\in \KK$ satisfies
\be\label{333}
P_{n}^{(l_{j}-1)}(y_{j}) = f^{(l_j-1)}(y_{j}), \quad \text{for all }\; 0\le j \le s-1,
\ee
then, for any $x\in I$, we have
 \begin{align} \label{78}
| & f(x)-P_n(x)|   \nonumber \\
&\le c(k,r)A
\begin{cases}
\DD_{s-1}(x, Y)\rho_n^{r-s}(x)  \omega_k(f^{(r)}, \rho_n(x)), & \text{if }  s \le r  ,\\
\DD_{r-1}(x, Y) \omega_k(f^{(r)},|x-y_{\sigma_r(x)}|^{1/k} \rho_n^{1-1/k}(x)), &\text{if }  s \ge r+1.
\end{cases}
\end{align}
\end{theorem}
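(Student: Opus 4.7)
My plan is to combine the Hermite vanishing of $g:=f-P_n$ on $Y$ with the derivative and Markov--Bernstein bounds of \lem{traux}, via the Hermite interpolation error formula with divided-difference remainder. A preliminary reduction handles nodes that are ``far'' from $x$: if $y\in Y$ satisfies $|x-y|>C\rho_n(x)$ for a fixed large constant $C\ge 1$, then $|x-y|$ itself contributes a factor $>C\rho_n(x)$ to $\DD_{s-1}(x,Y)$, and a short bookkeeping shows it suffices to prove the Case 1 bound with $Y$ replaced by the near sub-multiset $Y_x:=\{y\in Y:|x-y|\le C\rho_n(x)\}$ and $s$ by $s_x=|Y_x|$; analogously, if $|x-y_{\sigma_r(x)}|>C\rho_n(x)$ in Case 2, then $|x-y_{\sigma_r(x)}|^{1/k}\rho_n^{1-1/k}(x)\ge C^{1/k}\rho_n(x)$, so the Case 2 bound follows from the Case 1 bound applied to the $r$ nearest nodes. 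After these reductions, $\rho_n$ is comparable to $\rho_n(x)$ on the convex hull of the relevant nodes together with $x$.

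\textbf{Case $s\le r$.} Since $g$ Hermite-vanishes on $Y$ and $g\in C^s(I)$ (as $s\le r$), its Hermite interpolant on $Y$ is the zero polynomial, and the mean-value form of the Hermite error formula gives
\[
g(x)=\omega_Y(x)\cdot g[Y,x]=\omega_Y(x)\cdot\frac{g^{(s)}(\xi)}{s!}
\]
for some $\xi$ in the convex hull of $Y\cup\{x\}$, where $\omega_Y(u):=\prod_j(u-y_j)$. Applying \lem{traux} with $\nu=s$ bounds $|g^{(s)}(\xi)|\le cA\rho_n^{r-s}(\xi)\omega_k(f^{(r)},\rho_n(\xi))$, and the first case of \eqref{78} follows from $|\omega_Y(x)|=\DD_{s-1}(x,Y)$ and $\rho_n(\xi)\sim\rho_n(x)$.

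\textbf{Case $s\ge r+1$.} Let $Y^{(r)}:=\{y_{\sigma_0(x)},\dots,y_{\sigma_{r-1}(x)}\}$ and $y^*:=y_{\sigma_r(x)}$. The Hermite error formula applied to $g$ at $Y^{(r)}$ yields
\[
g(x)=\omega_{Y^{(r)}}(x)\cdot g^{(r)}(\xi)/r!
\]
for some $\xi$ in the convex hull of $Y^{(r)}\cup\{x\}$. The new element, compared with Case 1, is that $y^*\in Y$ is an additional interpolation node, so $g$ (and possibly some of its derivatives) vanishes at $y^*$. The key estimate to establish is
\[
|g^{(r)}(\xi)|\le cA\,\omega_k\!\bigl(f^{(r)},\,|\xi-y^*|^{1/k}\rho_n^{1-1/k}(x)\bigr),
\]
after which $|\xi-y^*|\le c|x-y^*|$ and $|\omega_{Y^{(r)}}(x)|=\DD_{r-1}(x,Y)$ yield the second case of \eqref{78}. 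The main technical obstacle is precisely this last estimate: converting one Hermite-vanishing condition of $g$ at $y^*$ into the fractional-exponent argument $|\xi-y^*|^{1/k}\rho_n^{1-1/k}(x)$ inside $\omega_k$ despite the mere $C^r$ regularity of $f$. The natural tool is a $k$-step telescoping, trading one order of vanishing of $g$ at $y^*$ for the factor $|\xi-y^*|^{1/k}$ and paying the remaining $k-1$ units of smoothness by the Markov--Bernstein bound \eqref{tr2} for $P_n^{(r+k)}$; the delicate bookkeeping of this balance is the technical heart of the proof.
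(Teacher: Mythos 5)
Your overall structure matches the paper's: reduce to the nodes near $x$, use the Hermite divided-difference remainder formula to expose a high-order derivative of $g=f-P_n$, and bound that derivative via \lem{traux}. Your Case $s\le r$ is correct and is essentially what the paper does (with $m=s-1\le r-1$, the divided difference $[x,y_0,\dots,y_m;g]=g^{(m+1)}(\theta)/(m+1)!$, bounded by \ineq{tr1}). Your Case $s\ge r+1$ has the right skeleton, and writing $g(x)=\omega_{Y^{(r)}}(x)\,g^{(r)}(\xi)/r!$ and then exploiting the extra vanishing at $y^*$ is equivalent to the paper's step of pulling $[y_0,\dots,y_r;g]$ out of the order-$(r+1)$ divided difference. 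However, the step you single out as ``the technical heart'' is genuinely left open, and it is exactly the nontrivial part. What you actually get from the extra node $y^*$ (via $[Y^{(r)}\cup\{y^*\};g]=0$, or equivalently the mean-value form of both divided differences in the recurrence) is only a first-order estimate: $|g^{(r)}(\xi)|\le\omega_1\big(g^{(r)},c|x-y^*|;J_n\big)$. Converting this into $\omega_k\big(f^{(r)},|x-y^*|^{1/k}\rho_n^{1-1/k}(x)\big)$ is not a ``$k$-step telescoping''; the paper uses the Marchaud inequality
\[
\omega_1(g^{(r)},t;J_n)\le ct\int_t^{|J_n|}\frac{\omega_k(g^{(r)},u;J_n)}{u^2}\,du + ct\,|J_n|^{-1}\norm{g^{(r)}}{J_n},
\]
together with the bound $\omega_k(g^{(r)},u;J_n)\le cA\,\omega_k(f^{(r)},u)$ (obtained from \ineq{tr2} applied to $P_n^{(r+k)}$ and the property $u^{-k}\omega_k\!\uparrow$), and then splits the integral at the intermediate point $\eta=|x-y^*|^{1/k}\rho_n^{1-1/k}(x)$; that optimization is precisely where the fractional exponent $1/k$ is produced. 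Without naming this mechanism and carrying out the split, the claimed key estimate $|g^{(r)}(\xi)|\le cA\,\omega_k(f^{(r)},|\xi-y^*|^{1/k}\rho_n^{1-1/k}(x))$ is unsupported, and there is no reason a direct telescoping with \ineq{tr2} would yield the geometric-mean scale. So: same route as the paper, but with the decisive Marchaud argument missing rather than merely deferred.
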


Note that \thm{maingen} is a simpler restatement of \thm{mainnew1} in the case $s=m+1 \le r+1$ and $y_0 = \dots = y_{s-1}=x_0$. At the same time, \thm{maingen} is almost (but not quite) as general as
\thm{mainnew1} because, if a point $z_i \in Z(Y)$ has multiplicity $m_i$ in $Y$, $n\in\N$ is so large that $2\rho_n(z_i) \le \delta(Y)$,
 and $x$ is sufficiently close to $z_i$, \ie $|x-z_i|\le \rho_n(z_i)$, then
$\DD_{j}(x,Y) \ge |x-z_i|^{m_i} \rho_n^{j+1-m_i}(z_i)$ for $j \ge m_i$,
 and $y_{\sigma_r(x)} = z_i$ if $m_i=r+1$, or $|x-y_{\sigma_r(x)}| \ge \rho_n(z_i)$ if $m_i \le r$.
Hence, for each  $0\le i\le \mu-1$,      \thm{maingen}  with $x_0=z_i$ and $m=m_i-1$ yields the estimate \ineq{78} for all $x\in [z_i-\rho_n(z_i), z_i+\rho_n(z_i)]\cap I$, and, of course,
if $\dist(x, Z) = |x-z_i| \ge  \rho_n(z_i)$, then  \ineq{78} is weaker than \ineq{classKK}.

In other words, if $n$ is sufficiently large depending on $\delta(Y)$ (for example, if $n \ge 4/\delta(Y)$), then there is no difference between Theorems~\ref{maingen} and \ref{mainnew1}. However, if $n$ is ``small' then \thm{mainnew1} is stronger.

All (positive) results above assume that we work with a polynomial from  the class  $\KK$ that also satisfies  Hermite interpolation conditions of type \ineq{333}, and we will show that such polynomials exist with some constant $A$ depending only on $k$, $r$ and $s$.
 The following theorem is proved in Section~\ref{thproof}.

\begin{theorem} \label{another}
Let $k,s \in\N$, $r\in \N_0$, and suppose that a set $Y=\{y_j\}_{j=0}^{s-1} \subset I$ is such that, if $s\ge r+2$, then
$\Lambda_r(Y) \ge \lambda >0$.
If $f\in C^r$ then, for every $n \ge N(k, r, s, \lambda)$, there exists a polynomial $P_n \in\Pn$ such that
\[ %\label{an1}
P_{n}^{(l_j-1)}(y_j; f, Y) = f^{(l_j-1)}(y_j), \quad \text{for all }\; 0\le j \le s-1 ,
\]
and
\be \label{an2}
|f(x)-P_n(x)|\le c(k,r,s) \rho_n^r(x)\w_k(f^{(r)},\rho_n(x)),\quad x\in I.
\ee
\end{theorem}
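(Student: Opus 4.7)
The plan is to set $P_n = Q_n + T_n$, where $Q_n \in \Pn$ realises the classical Timan--Dzyadyk--Freud--Brudnyi estimate \eqref{classdir} (so $Q_n \in \K_n(f,A_0,k,r)$ with $A_0 = A_0(k,r)$), and $T_n \in \Pn$ is a correction absorbing the Hermite data of $R := f - Q_n$ at $Y$. The hypothesis $\Lambda_r(Y) \geq \lambda > 0$ (vacuous when $s \leq r+1$) forces every multiplicity $m_i \leq r+1$, so each value $R^{(l_j-1)}(y_j)$ is well-defined, and \lem{traux} applied to $Q_n$ gives
$$|R^{(\nu)}(x)| \leq c(k,r)\,\rho_n^{r-\nu}(x)\,\omega_k(f^{(r)},\rho_n(x)),\qquad 0 \leq \nu \leq r,\ x \in I.$$

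Now group the distinct nodes $Z(Y) = \{z_i\}_{i=0}^{\mu-1}$ into maximal chains of ``$\rho_n$-close'' points, producing clusters $C_1,\ldots,C_p$. The threshold $n \geq N(k,r,s,\lambda)$ is chosen large enough that $\rho_n \ll \lambda$ everywhere; combined with $\Lambda_r \geq \lambda$ this ensures that each cluster has total $Y$-multiplicity at most $r+1$, and that distinct clusters are separated by at least $c(r)\,\lambda$. For each cluster $C_t$ with representative $\zeta_t$ and sub-multiset $Y_t \subset Y$, let $L_t \in \Pi_r$ be the Lagrange--Hermite interpolant of $R$ on $Y_t$, and let $\Psi_t \in \Pn$ be a Dzyadyk-type damping polynomial of degree $\leq n - r$ satisfying $\Psi_t^{(l)}(z) = \delta_{l0}$ at every $z \in C_t$ for $0 \leq l \leq m_z - 1$, $\Psi_t^{(l)}(z) = 0$ at every $z \in Z(Y) \setminus C_t$ for $0 \leq l \leq m_z - 1$, and the off-cluster decay
$$|\Psi_t(x)| \leq c(k,r,s)\left(\frac{\rho_n(\zeta_t)}{|x - \zeta_t| + \rho_n(\zeta_t)}\right)^{M}, \qquad x \in I,$$
for a large integer $M = M(k,r)$. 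Define $T_n(x) := \sum_{t=1}^{p} L_t(x)\,\Psi_t(x) \in \Pn$. Leibniz's rule and the Hermite properties of $L_t,\Psi_t$ then give $T_n^{(l_j-1)}(y_j) = R^{(l_j-1)}(y_j)$ for every $j$, so $P_n := Q_n + T_n$ satisfies the required interpolation.

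For the pointwise estimate, the Lagrange--Hermite formula combined with the derivative bounds on $R$ yields
$$|L_t(x)| \leq c(k,r,s)\sum_{\nu=0}^{r}(|x - \zeta_t|+\rho_n(\zeta_t))^\nu\,\rho_n^{r-\nu}(\zeta_t)\,\omega_k(f^{(r)},\rho_n(\zeta_t));$$
multiplying by the $M$th-power decay of $\Psi_t$ and invoking the standard comparisons $\rho_n(\zeta_t)/\rho_n(x) \leq c(1+n|x-\zeta_t|)$ and $\omega_k(f^{(r)},\rho_n(\zeta_t)) \leq c\,(1+n|x-\zeta_t|)^k\,\omega_k(f^{(r)},\rho_n(x))$, then splitting into the regions $|x-\zeta_t| \leq \rho_n(\zeta_t)$ (where the damping is $\sim 1$ and $\rho_n(x) \sim \rho_n(\zeta_t)$) and its complement (where polynomial growth is absorbed by the $M$th-power decay once $M \geq r+k+2$), one shows each cluster contributes at most $c(k,r,s)\,\rho_n^r(x)\,\omega_k(f^{(r)},\rho_n(x))$. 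Summing over the at most $s$ clusters yields \eqref{an2}.

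The main obstacle is the construction of the damping polynomial $\Psi_t$ with the combination of the Hermite reproducing property at in-cluster nodes, Hermite vanishing at out-of-cluster nodes, and sharp $M$th-power off-diagonal decay, all within $\Pn$. This is handled by the standard Dzyadyk kernel machinery: start from a nonnegative base kernel of one-power decay (a squared, normalized Fej\'er-type polynomial in $(x-\zeta_t)/\rho_n(\zeta_t)$), raise it to a sufficiently high integer power to boost the decay to order $M + r + 1$, multiply by the Lagrange-type factor $\prod_{z \in Z(Y) \setminus C_t}(x-z)^{m_z}$ to enforce vanishing at outside nodes, and finally multiply by a local correcting polynomial of degree $\leq r$ to restore the Kronecker-delta values at the in-cluster nodes via a uniformly nondegenerate Hermite-type linear system. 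The remaining estimates are then mechanical consequences of \lem{traux} and standard properties of $\rho_n$.
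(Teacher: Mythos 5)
Your construction is genuinely different from the paper's: the paper builds a piecewise polynomial $S$ that already Hermite\mbox{-}interpolates $f$ on $Y$ and has the correct local order, writes $S$ as a telescoping sum $p_n+\sum_i(p_i-p_{i+1})\chi_i$, and then replaces each step function $\chi_i$ by a polynomial $R_i$ (Corollary~\ref{newcorol}) that interpolates $\chi_i$ at $Y$ together with the vanishing of derivatives; whereas you start from a classical (non\mbox{-}interpolating) Timan polynomial $Q_n$ and restore the Hermite data by adding a localized correction $\sum_t L_t\Psi_t$. Both schemes can in principle work, and your route to the interpolation identities (via Leibniz and the Kronecker\mbox{-}delta and vanishing properties of $\Psi_t$) is correct. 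However, there is a genuine error in the estimate part: the comparison you invoke,
\[
\frac{\rho_n(\zeta_t)}{\rho_n(x)}\le c\bigl(1+n|x-\zeta_t|\bigr)
\quad\text{and hence}\quad
\omega_k\bigl(f^{(r)},\rho_n(\zeta_t)\bigr)\le c\bigl(1+n|x-\zeta_t|\bigr)^k\omega_k\bigl(f^{(r)},\rho_n(x)\bigr),
\]
is false near the endpoints: take $x=1$, $\zeta_t=1-n^{-1}$, so $n|x-\zeta_t|=1$, $\rho_n(x)=n^{-2}$, but $\rho_n(\zeta_t)\sim n^{-3/2}$, so the left side grows like $n^{1/2}$ while the right stays bounded. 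What is actually true (and what your argument needs) is the elementary two\mbox{-}sided bound $\rho_n(y)\le c\bigl(\rho_n(x)+|x-y|\bigr)$ and, more usefully here, the quadratic comparison
\[
\rho_n^{\,2}(\zeta_t)\le c\,\rho_n(x)\bigl(\rho_n(x)+|x-\zeta_t|\bigr),
\]
which gives $\bigl(\rho_n(\zeta_t)/d\bigr)\bigl(\rho_n(\zeta_t)/\rho_n(x)\bigr)\le c$ for $d:=|x-\zeta_t|\ge\rho_n(\zeta_t)$. With this one checks that the cluster contribution $(d+\rho_n(\zeta_t))^{\nu}\rho_n^{\,r-\nu}(\zeta_t)\,\omega_k\bigl(f^{(r)},\rho_n(\zeta_t)\bigr)\bigl(\rho_n(\zeta_t)/(d+\rho_n(\zeta_t))\bigr)^{M}$ is $\le c\,\rho_n^{\,r}(x)\,\omega_k\bigl(f^{(r)},\rho_n(x)\bigr)$ once $M\ge k+2r$; your stated threshold $M\ge r+k+2$ is too small for $r\ge3$.

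Two further points you should make explicit. First, the factor $\prod_{z\in Z(Y)\setminus C_t}(x-z)^{m_z}$, after normalization at $\zeta_t$, is of size $\bigl(1+|x-\zeta_t|/(c\lambda)\bigr)^{s}$ for $x$ away from $\zeta_t$; so the constant in your decay bound for $\Psi_t$ is not automatically $\lambda$\mbox{-}free. This is where the hypothesis $n\ge N(k,r,s,\lambda)$ is essential: one must choose the base\mbox{-}kernel exponent $L$ strictly larger than $M$ and then take $N(\lambda)\gtrsim\lambda^{-1-s/(L-M)}$ so that $\bigl(\rho_n(\zeta_t)/\lambda\bigr)^{L-M}\lambda^{-s}\le 1$, after which the residual decay $(\rho_n(\zeta_t)/(|x-\zeta_t|+\rho_n(\zeta_t)))^{M}$ has a constant depending only on $k,r,s$. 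The paper's $R_i$ (built from the step\mbox{-}approximating $T_j$ of Lemma~\ref{h15}, which are uniformly bounded) sidestep this normalization issue entirely, which is one reason that route is cleaner. Second, the Fej\'er\mbox{-}type kernel raised to a power $L$ has degree a constant multiple of $n$, not $\le n-r$, so a final rescaling $n\mapsto cn$ is needed to land in $\Pn$; this is routine but worth noting (the paper's proof has the same bookkeeping via Corollary~\ref{newcorol}, whose $R_j$ have degree $\le c(\mu,r,s)n$).
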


Combining Theorems~\ref{another} and \ref{mainnew1} we arrive at the following general result on Hermite interpolation.
\begin{theorem} \label{191}
Let $k,s \in\N$, $r\in \N_0$, and suppose that a set $Y=\{y_j\}_{j=0}^{s-1} \subset I$ is such that, if $s\ge r+2$, then
$\Lambda_r(Y) \ge \lambda >0$.
If $f\in C^r$ then, for every $n \ge N(k, r, s, \lambda)$, there exists a polynomial $P_n \in\Pn$ such that, for all $x\in I$,
\[
|f(x)-P_n(x)|\le c(k,r,s) \rho_n^r(x)\w_k(f^{(r)},\rho_n(x)),
\]
and, moreover,
\begin{align}  \label{an222}
&|f(x)-P_n(x)| \nonumber \\
& \le c(k,r,s)
\begin{cases}
\DD_{s-1}(x, Y)\rho_n^{r-s}(x)  \omega_k(f^{(r)}, \rho_n(x)) , & \text{if }  s \le r  ,\\
\DD_{r-1}(x, Y) \omega_k(f^{(r)},|x-y_{\sigma_r(x)}|^{1/k} \rho_n^{1-1/k}(x)) , &\text{if }  s \ge r+1.
\end{cases}
\end{align}
\end{theorem}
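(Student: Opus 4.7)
The plan is to prove \thm{191} by direct composition of \thm{another} and \thm{mainnew1}, since the former constructs a polynomial with the required Hermite interpolation conditions and the classical pointwise bound, while the latter takes any such polynomial and upgrades its pointwise bound to the interpolatory estimate \ineq{an222}.

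First, I would invoke \thm{another} with the given data $(k,r,s,Y,\lambda,f)$. For every $n \ge N(k,r,s,\lambda)$ this produces a polynomial $P_n \in \Pn$ that satisfies the Hermite interpolation conditions
\[
P_n^{(l_j-1)}(y_j) = f^{(l_j-1)}(y_j), \qquad 0 \le j \le s-1,
\]
together with the bound \ineq{an2}, which already gives the first (classical) estimate asserted by \thm{191}. In particular, \ineq{an2} says precisely that $P_n \in \K_n(f, A, k, r)$ for the constant $A := c(k,r,s)$ supplied by \thm{another}.

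Next, I would apply \thm{mainnew1} to this same polynomial $P_n$, using as input the set $Y$ together with the interpolation conditions \ineq{333} (which coincide with those just verified) and the membership $P_n \in \K_n(f,A,k,r)$. The hypothesis $\Lambda_r(Y) \ge \lambda > 0$ is already assumed when $s \ge r+2$, matching the hypothesis of \thm{mainnew1}. The conclusion \ineq{78} of \thm{mainnew1} is exactly \ineq{an222}, with constant $c(k,r)\cdot A = c(k,r) c(k,r,s)$, which we absorb into a single constant $c(k,r,s)$.

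There is no real obstacle here: the content of \thm{191} is the juxtaposition of an existence statement (\thm{another}) and a structural statement (\thm{mainnew1}), and the only thing to check is that the constant $A$ from \thm{another} depends only on $k,r,s$, so that the final constants in both inequalities of \thm{191} likewise depend only on $k,r,s$ (and are independent of $f$, $n$ and the particular set $Y$ beyond the bound $\lambda$). Since this is precisely the form of the conclusion of \thm{another}, the combination goes through without any extra work.
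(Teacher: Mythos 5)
Your proof is correct and is exactly the paper's argument: the paper introduces \thm{191} with the single line ``Combining Theorems~\ref{another} and \ref{mainnew1} we arrive at the following general result,'' which is precisely the composition you carry out. Your note that the constant $A$ from \thm{another} depends only on $k,r,s$ is the right (and only) thing to check when merging the two constants.
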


The following lemma follows from, \eg   \cite{KLSUMZh}*{Theorem 5.2 and Lemma 3.1}.

\begin{lemma} \label{umzhlem}
Let $r\in \N_0$ and $s\in\N$ be such that $s\ge r+1$,   and suppose that a set $Y=\{y_j\}_{j=0}^{s-1} \subset [a,b]$ is such that, if  $s\ge r+2$, then
$\Lambda_r(Y) \ge \lambda (b-a)$,
where $0<\lambda \le 1$ (if $s=r+1$, this condition is not needed). If $f\in C^r[a,b]$  then, for all $x\in [a,b]$,
\be \label{umzh1}
|f(x)-L_{s-1}(x; f, Y)| \le c(s,\lambda) (b-a)^r \w_{s-r}(f^{(r)}, b-a, [a,b]) .
\ee
\end{lemma}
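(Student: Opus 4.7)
The plan is to reduce to a normalized interval and then combine a Whitney-type approximation with a bound on the Hermite fundamental polynomials that exploits the separation condition $\Lambda_r(Y) \ge \lambda(b-a)$.

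First I would apply the affine substitution $t = (x-a)/(b-a)$ to transfer everything to $[0,1]$. The modulus scales as $(b-a)^{-r}$ times $\omega_{s-r}(f^{(r)}, 1, [0,1])$, so it is enough to prove
\[
|f(x) - L_{s-1}(x; f, Y)| \le c(s,\lambda)\,\omega_{s-r}(f^{(r)}, 1, [0,1]), \quad x \in [0,1],
\]
under the assumption $\Lambda_r(Y) \ge \lambda$ on $[0,1]$. Here I note a crucial consequence of the separation condition: if $s \ge r+2$, then the multiplicity $m_i$ of any point $z_i \in Z(Y)$ is at most $r+1$ (since $r+2$ coalescing points would make $\Lambda_r(Y) = 0$), so the Hermite data $f^{(l_j-1)}(y_j)$ appearing in \eqref{hermite} only involves derivatives of order $\le r$, which exist for $f \in C^r$. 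In the case $s = r+1$ no constraint is needed because a single cluster at one point only uses derivatives up to order $r$.

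Next I would invoke a Whitney-type theorem for $C^r$ functions (a simultaneous version, e.g.\ Brudnyi's form): there exists $q \in \Pi_{s-1}$ such that
\[
\|f^{(j)} - q^{(j)}\|_{[0,1]} \le c(s)\,\omega_{s-r}(f^{(r)}, 1, [0,1]), \quad 0 \le j \le r.
\]
Since $L_{s-1}(\cdot; q, Y) = q$ for every $q \in \Pi_{s-1}$, linearity gives
\[
f(x) - L_{s-1}(x; f, Y) = \bigl(f(x) - q(x)\bigr) - L_{s-1}(x; f - q, Y).
\]
Writing $L_{s-1}$ in its Hermite fundamental basis,
\[
L_{s-1}(x; f - q, Y) = \sum_{i=0}^{\mu-1} \sum_{j=0}^{m_i-1} \bigl(f^{(j)}(z_i) - q^{(j)}(z_i)\bigr) H_{i,j}(x),
\]
the triangle inequality and the Whitney bound reduce everything to controlling $\|H_{i,j}\|_{[0,1]}$.

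The main obstacle, and the place where the separation hypothesis $\Lambda_r(Y) \ge \lambda$ is used, is the uniform estimate
\[
\|H_{i,j}\|_{[0,1]} \le c(s,\lambda), \quad 0 \le i \le \mu-1,\ 0 \le j \le m_i - 1.
\]
This follows from the standard product representation of each $H_{i,j}$ in terms of $\prod_{l \ne i}(x - z_l)^{m_l}$ times a correction polynomial of degree $< m_i$: the factors $|x - z_l|$ are bounded by $1$ on $[0,1]$, while the reciprocals $\prod_{l \ne i} (z_i - z_l)^{-m_l}$ are controlled because the assumption $\Lambda_r(Y) \ge \lambda$ prevents more than $r+1$ consecutive points from clustering and thus bounds from below each relevant product of node differences in terms of $\lambda$. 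Once this bound is in hand, combining the two displays yields
\[
|f(x) - L_{s-1}(x; f, Y)| \le \|f - q\|_{[0,1]} + c(s,\lambda) \sum_{i,j} \|f^{(j)} - q^{(j)}\|_{[0,1]} \le c(s,\lambda)\,\omega_{s-r}(f^{(r)}, 1, [0,1]),
\]
which rescales back to \eqref{umzh1}. The delicate point is really just the Hermite-basis estimate; everything else is routine assembly.
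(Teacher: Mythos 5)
Your reduction to $[0,1]$ and the Whitney step are fine: one can construct $q\in\Poly_{s-1}$ with $\|f^{(j)}-q^{(j)}\|_{[0,1]}\le c(s)\,\w_{s-r}(f^{(r)},1,[0,1])$ for $0\le j\le r$ by applying Whitney's theorem to $f^{(r)}$ on $\Poly_{s-r-1}$ and integrating with matching initial conditions. The gap is in the final and, as you note yourself, ``delicate'' step: the claimed uniform bound $\|H_{i,j}\|_{[0,1]}\le c(s,\lambda)$ is \emph{false}. The hypothesis $\Lambda_r(Y)\ge\lambda$ only forbids $r+2$ (or more) consecutive points from $Y$ (counted with multiplicity) from clustering; it places no lower bound on the gap between two distinct nodes $z_i,z_{i+1}$ whose combined multiplicities total at most $r+1$. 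Concretely, take $r=1$, $s=3$, $[a,b]=[0,1]$ and $Y=\{0,\eps,1\}$, so $\Lambda_1(Y)=y_2-y_0=1$ for all $\eps\in(0,1)$; yet the Lagrange fundamental polynomial at $z_1=\eps$ satisfies
\begin{equation*}
H_{1,0}\big(\tfrac12\big)=\frac{\tfrac12\cdot(-\tfrac12)}{\eps(\eps-1)}\longrightarrow\infty\quad(\eps\to0^+),
\end{equation*}
while $\lambda=1$ stays fixed. So $\|H_{1,0}\|_{[0,1]}$ is unbounded on this admissible family, and your term-by-term estimate $\sum_{i,j}|(f-q)^{(j)}(z_i)|\,\|H_{i,j}\|_{[0,1]}$ cannot yield a bound depending only on $(s,\lambda)$.

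The lemma is nevertheless true because the contributions from nearby nodes cancel: in the example above, $H_{0,0}+H_{1,0}=\frac{(x-1)(x+1-\eps)}{\eps-1}$ remains bounded even though each summand diverges. Any argument that expands $L_{s-1}(\cdot\,;f-q,Y)$ in the Hermite fundamental basis and estimates by absolute values throws this cancellation away and cannot close the gap. This is precisely why the paper instead appeals to a divided-difference estimate (the cited source is a paper on estimates of divided differences, \cite{KLSUMZh}): the Newton remainder representation $f(x)-L_{s-1}(x;f,Y)=[x,y_0,\dots,y_{s-1};f]\prod_{j}(x-y_j)$, together with a bound on $|[x,y_0,\dots,y_{s-1};f]|$ that uses the $\Lambda_r$-condition to pass from an order-$s$ divided difference of $f$ to an $(s-r)$th modulus of $f^{(r)}$, automatically encodes the needed cancellation. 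To repair your proof you would either need to switch to that divided-difference route, or prove a bound on a suitable \emph{grouped} sum of fundamental polynomials rather than on each $\|H_{i,j}\|$ separately; the latter essentially reconstructs the divided-difference argument.
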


\begin{remark}
It immediately follows from \lem{umzhlem} that Theorems~\ref{another} and \ref{191} are valid for all $n \ge \max\{s-1, k+r-1\}$ if the constants $c$ in \ineq{an2} and \ineq{an222} are also allowed to depend on $\lambda$.
\end{remark}

Using \thm{another}, \lem{traux} and \thm{mainnew1} one   immediately arrives at a similar result for simultaneous approximation. However, its statement in the form of \thm{191}  would be rather technical (to estimate the rate of approximation of $f^{(\nu)}$ by the $\nu$-th derivative of $P_n$ we would need to work with a set $Y^\nu$ obtained from $Y$ by removing all points whose multiplicity is at most $\nu$ and reducing multiplicities of all other points by $\nu$). So, instead, we state this result in a simpler (but not as general) form similar to that of
 \thm{mainnew}.

\begin{corollary} \label{19}
Let $k,s \in\N$, $r\in \N_0$, and  let $Y=\{y_j\}_{j=0}^{s-1} \subset I$ be such that $1\le m_j \le r+1$, $0\le j \le s-1$. If $f\in C^r$ then, for every $n \ge N(k, r, s, \delta(Y))$, there exists a polynomial $P_n \in\Pn$ such that, for all $0\le \nu \le r$ and $x\in I$,
\[
|f^{(\nu)}(x)- P_n^{(\nu)}(x)| \le c(k,r,s)   \rho_n^{r-\nu}(x) \w_k(f^{(r)}, \rho_n(x)),
\]
and, moreover,
  for all  $0\le \nu \le r$, $1\le \ell \le k$ and  $0\le j \le s-1$,   if $|x-y_j| \le \rho_n(y_j)$, then
\begin{align*}
| & f^{(\nu)}(x)-P_n^{(\nu)}(x)| \\
& \le c(k,r,s)
\begin{cases}
|x-y_j|^{\sigma_j} \rho_n^{r-\nu-\sigma_j}(y_j)  \omega_\ell(f^{(r)}, \rho_n(y_j)) , & \text{if }\; m_j \le r  ,\\
|x-y_j|^{r-\nu} \omega_\ell(f^{(r)},|x-y_j|^{1/\ell} \rho_n^{1-1/\ell}(y_j)) , & \text{if }\; m_j=r+1 ,
\end{cases}
\end{align*}
where $\sigma_j := \max\{ m_j-\nu, 0\}$.
\end{corollary}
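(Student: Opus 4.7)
The plan is to obtain a single polynomial $P_n$ by invoking \thm{another} and then read off the two estimates from \lem{traux} and \thm{mainnew}, respectively. First, I note that the hypothesis $1\le m_j\le r+1$ implies $\Lambda_r(Y)\ge \delta(Y)$: among any $r{+}2$ consecutive points $y_i,\dots,y_{i+r+1}$, at most $r{+}1$ can coalesce at a single $z\in Z(Y)$, so these $r{+}2$ points involve at least two distinct values and thus their diameter is at least $\delta(Y)$. Consequently, for $n\ge N(k,r,s,\delta(Y))$ (which majorizes $N(k,r,s,\Lambda_r(Y))$), \thm{another} produces a polynomial $P_n\in\Pn$ satisfying the full Hermite interpolation conditions $P_n^{(l_j-1)}(y_j)=f^{(l_j-1)}(y_j)$, $0\le j\le s-1$, together with the classical direct estimate \ineq{classKK} with constant $A=c(k,r,s)$. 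In other words, $P_n\in\K_n(f,c(k,r,s),k,r)$.

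The first displayed estimate of the corollary is then immediate from \lem{traux}: applying \ineq{tr1} to this $P_n$ yields
\[
|f^{(\nu)}(x)-P_n^{(\nu)}(x)|\le c(k,r)\cdot c(k,r,s)\,\rho_n^{r-\nu}(x)\,\w_k(f^{(r)},\rho_n(x)),\quad 0\le\nu\le r,\; x\in I.
\]

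For the improved local estimate at a fixed interpolation node $y_j$, I will apply \thm{mainnew} with $x_0=y_j$ and $m=m_j-1\le r$; the hypothesis \ineq{2g} holds because $P_n^{(i)}(y_j)=f^{(i)}(y_j)$ for $0\le i\le m_j-1$ by construction. The conclusion \ineq{4nnn} reads, with $\sigma=\max\{m_j-\nu,0\}=\sigma_j$,
\[
|f^{(\nu)}(x)-P_n^{(\nu)}(x)|\le c(k,r,s)
\begin{cases}
|x-y_j|^{\sigma_j}\rho_n^{r-\nu-\sigma_j}(x)\,\w_\ell(f^{(r)},\rho_n(x)), & m_j\le r,\\[2pt]
|x-y_j|^{r-\nu}\,\w_\ell(f^{(r)},|x-y_j|^{1/\ell}\rho_n^{1-1/\ell}(x)), & m_j=r+1,
\end{cases}
\]
for all $x\in I$. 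To pass from $\rho_n(x)$ to $\rho_n(y_j)$ one uses the elementary fact recorded after \thm{maingen}: if $|x-y_j|\le \rho_n(y_j)$ then $\rho_n(x)\sim\rho_n(y_j)$, with equivalence constants depending only on absolute quantities. Combining this with the standard monotonicity property $\w_\ell(g,\lambda t)\le \lceil\lambda\rceil^\ell\w_\ell(g,t)$ allows us to replace every occurrence of $\rho_n(x)$ in the estimate above by $\rho_n(y_j)$ at the cost of a multiplicative constant absorbed into $c(k,r,s)$, yielding exactly the second displayed estimate of the corollary.

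I do not anticipate a real obstacle: the argument is a bookkeeping combination of the already proved Theorems~\ref{another} and~\ref{mainnew} with \lem{traux}. The only slightly delicate point is verifying that the restriction on the threshold $n\ge N(k,r,s,\delta(Y))$ (phrased in terms of $\delta(Y)$) is compatible with the hypothesis $n\ge N(k,r,s,\Lambda_r(Y))$ required by \thm{another}, which is handled by the inequality $\Lambda_r(Y)\ge\delta(Y)$ established in the first paragraph.
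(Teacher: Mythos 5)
Your proof is correct and follows essentially the route the paper indicates (the paper itself only sketches the argument in the paragraph before the corollary, pointing to \thm{another}, \lem{traux}, and \thm{mainnew1}). You substitute \thm{mainnew} with $x_0=y_j$ and $m=m_j-1$ in place of \thm{mainnew1}, which is entirely equivalent in the claimed local regime $|x-y_j|\le\rho_n(y_j)$ and avoids having to translate the $\DD_m(x,Y)$ and $y_{\sigma_r(x)}$ notation; your supporting observations --- $\Lambda_r(Y)\ge\delta(Y)$ (so that $N(k,r,s,\delta(Y))$ is an admissible threshold in \thm{another}), the absorption of the constant from \thm{another} into $A$, and the passage from $\rho_n(x)$ to $\rho_n(y_j)$ via the doubling property of $\rho_n$ together with $\omega_\ell(g,\lambda t)\le\lceil\lambda\rceil^\ell\omega_\ell(g,t)$ --- are all used correctly.
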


 \begin{remark}
It follows from \lem{umzhlem} that \cor{19} is valid for all $n \ge \max\{s-1, k+r-1\}$ if all constants $c$ are allowed to depend on $\delta(Y)$.
Also,
by allowing all constants in \cor{19}  to depend on $Y$ and setting $\ell=k$ and  $m_j=r+1$, for all $0\le j\le s-1$, we get  \cite{Trigub}*{Theorem 2}.
\end{remark}

It is easy to see that the dependence of the constants $c$ and $N$ on $\lambda$ in \lem{umzhlem} and \thm{another} (and so in \thm{191}), respectively, cannot be removed. For example, if $[a,b]=I$,
$f$ is such that $f^{(r)}(x) = \epsilon^{-1} (x-1+\epsilon)_+$, and $Y$ consists of $s=2r+2$ points $z_0=1-\epsilon$ and $z_1=1$, each with multiplicities $r+1$, then the first modulus of $f^{(r)}$ is bounded above by $1$ and, at the same time, any polynomial $P_n$ whose $r$-th derivative interpolates $f^{(r)}$ at $z_0$ and $z_1$, has to satisfy
$\norm{P_n^{(r+1)}}{} \ge \epsilon^{-1}$, and so, by Markov's inequality, $\norm{P_n}{} \ge \epsilon^{-1} n^{-2r-2}$.
Hence, $P_n$
cannot satisfy \ineq{umzh1}   or \ineq{an2} if constants $c$ and $N$ there do not depend on $\epsilon$.
In particular, this implies that the statement of \cite{Trigub}*{Lemma 3} (even after a correction of a few obvious misprints) is wrong ($\gamma_2$ there cannot be independent of $X_1$).

 The outline of the remaining sections of this paper is as follows. After discussing the history and several  versions of the Dzyadyk-Lebed'-Brudnyi theorem in \sectio{dlb} we use it to provide a simple proof of  \lem{traux} in \sectio{simultsect}. Theorems~\ref{mainnew1} and \ref{another}  are proved in Sections~\ref{sec3} and \ref{thproof}, respectively. \sectio{sec5} is devoted to negative theorems: after proving the  negative result, \thm{negative}, we discuss a much simpler but not as powerful  weak version of this theorem. Finally, several applications are given in \sectio{sec6}.

\sect{Auxiliary statements and proof of \lem{traux}} %\label{sec2}

\subsection{Dzyadyk-Lebed'-Brudnyi theorem} \label{dlb}

For each $\alpha>0$ and $M\ge 1$, we denote
\[
\tPhi^\alpha (M)  := \left\{ \phi: (0,\infty) \to (0,\infty)  \suchthat   \phi(t)\uar  \andd  t^{-\alpha}\phi(t) \dar   \right\},
\]
where
\begin{align*}
\psi \uar & \; \iff \; \psi(t_1)\le M \psi(t_2), \; 0<t_1\le t_2 , \andd \\
\psi \dar & \; \iff \; \psi(t_1)\ge   \psi(t_2)/M, \; 0<t_1\le t_2 .
\end{align*}
We also let $\Phi^\alpha := \tPhi^\alpha (1) $, \ie
\[
\Phi^\alpha := \left\{ \phi: (0,\infty) \to (0,\infty)  \suchthat   \phi(t)\uparrow  \andd  t^{-\alpha}\phi(t)\downarrow  \right\},
\]
where we use the notation $g\uparrow$ ($g\downarrow$) to indicate that $g$ is nondecreasing (nonincreasing).
 It is not difficult to see that $\Phi^\alpha \subset C(0,\infty)$ and $\tPhi^\alpha (M) \not\subset C(0,\infty)$ if $M>1$. At the same time, any function from $\tPhi^\alpha (M)$ has the same order of magnitude as a function from
 $\Phi^\alpha$. More precisely, the following lemma is valid.

 \begin{lemma} \label{lem21f}
For any $\alpha >0$, $M\ge 1$ and $\w\in \tPhi^\alpha (M)$, there exists $\w^*\in\Phi^\alpha$ such that $\w(t) \le \w^*(t) \le M^2 \w(t)$, $t>0$.
 \end{lemma}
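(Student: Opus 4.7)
Proof plan. I would build $\omega^{*}$ by a single supremum that structurally enforces both defining properties of $\Phi^{\alpha}$ at once, namely
\[
\omega^{*}(t) := \sup_{u>0}\, \min\!\bigl(1,\,(t/u)^{\alpha}\bigr)\,\omega(u), \qquad t>0.
\]
The rationale is that for fixed $u$ the coefficient $\min(1,(t/u)^{\alpha})$ is nondecreasing in $t$, while the rescaled coefficient
$t^{-\alpha}\min(1,(t/u)^{\alpha})=\min(t^{-\alpha},u^{-\alpha})$
is nonincreasing in $t$. Taking a supremum preserves pointwise monotonicity of a family, so $\omega^{*}\uparrow$ and $t^{-\alpha}\omega^{*}(t)\downarrow$ will fall out for free, without any additional constants.

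The verification proceeds in three short steps. First, the lower bound $\omega(t)\le\omega^{*}(t)$ is just the $u=t$ entry of the sup, where the min equals $1$. Second, for the upper bound I split the sup at $u=t$: for $u\le t$ the coefficient is $1$ and $\omega(u)\le M\omega(t)$ by $\omega\uar$; for $u\ge t$ the coefficient is $(t/u)^{\alpha}$, and
$(t/u)^{\alpha}\omega(u)=t^{\alpha}\bigl(u^{-\alpha}\omega(u)\bigr)\le M\,\omega(t)$
by the quasi-monotonicity $u^{-\alpha}\omega(u)\dar$. Hence in fact $\omega^{*}(t)\le M\omega(t)\le M^{2}\omega(t)$, which is stronger than the claimed bound and, in passing, shows $\omega^{*}$ is finite and strictly positive. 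Third, $\omega^{*}\uparrow$ and $t^{-\alpha}\omega^{*}\downarrow$ are read off from the observations of the previous paragraph applied to the defining sup.

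Finally, membership in $\Phi^{\alpha}$ requires continuity, since the excerpt notes $\Phi^{\alpha}\subset C(0,\infty)$. This is automatic from the two monotonicity properties now holding with constant $1$: for $0<s\le t$ they yield the sandwich
\[
\omega^{*}(s)\le \omega^{*}(t)\le (t/s)^{\alpha}\omega^{*}(s),
\]
so sending $t\to s^{+}$ and $s\to t^{-}$ forces $\omega^{*}$ to be both right- and left-continuous at every point. There is no genuine obstacle in the argument; the only real judgment is the choice of definition. A more pedestrian two-stage construction (first replace $\omega$ by its running supremum to repair $\uar$, then multiply by $t^{\alpha}$ and take a reverse-running infimum to repair $\dar$) would also work but typically produces an $M^{2}$ (or $M^{3}$) constant and forces one to re-check that the first step's monotonicity survives the second; the unified $\min$-definition bypasses both issues.
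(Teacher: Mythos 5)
Your proof is correct, and it takes a genuinely different route from the paper. The paper uses a two-stage construction: first replace $\omega$ by the running supremum $\widetilde\omega(t):=\sup_{0<u\le t}\omega(u)$ to enforce monotone growth, then apply Stechkin's theorem ($\omega^*(t):=t^\alpha\sup_{u>t}\widetilde\omega(u)/u^\alpha$) to enforce the $t^{-\alpha}$-decay; each stage loses a factor $M$, which is exactly where the $M^2$ in the statement comes from. Your single supremum
\[
\omega^*(t)=\sup_{u>0}\min\bigl(1,(t/u)^\alpha\bigr)\omega(u)
\]
enforces both monotonicities at once, is self-contained (no appeal to Stechkin), and in fact yields the sharper bound $\omega(t)\le\omega^*(t)\le M\omega(t)$ rather than $M^2\omega(t)$, as your split of the sup at $u=t$ shows. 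Your verification of the two monotonicities as pointwise suprema of monotone families is correct, as is the sandwich argument $\omega^*(s)\le\omega^*(t)\le(t/s)^\alpha\omega^*(s)$ for continuity (though continuity is automatic once the two exact monotonicities hold, so membership in $\Phi^\alpha$ does not strictly require that extra step). In short: same conclusion, different and arguably cleaner construction with a better constant.
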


\begin{proof} Given $\w\in \tPhi^\alpha (M)$, we first define $\widetilde\w (t) := \sup_{0<u\le t} \w(u)$ and note that
$\widetilde\w (t) \uparrow$. Also, $\widetilde\w \in \tPhi(M)$. Indeed, suppose that $0<t_1 < t_2$. If $\widetilde\w (t_1) = \widetilde\w (t_2)$, then
$t_1^{-\alpha} \widetilde\w (t_1) \ge t_2^{-\alpha} \widetilde\w (t_2)$. Otherwise, $\widetilde\w (t_1) < \widetilde\w (t_2)$,  and so
for each $\eps>0$, there is $t_* \in (t_1, t_2]$ such that $\w(t_*) \ge \widetilde\w(t_2) - \eps$, which implies
%for  sufficiently small $\eps>0$, if we let $t_*\le t_2$ be such that $\w(t_*) \ge \widetilde\w(t_2) - \eps$,   then
%$t_1 < t_*$, and so
\[
t_1^{-\alpha} \widetilde\w (t_1) \ge t_1^{-\alpha}\w (t_1)   \ge   t_*^{-\alpha}  \w (t_*)/M  \ge   t_2^{-\alpha}(\widetilde\w (t_2) - \eps)/M ,
\]
and it remains to take $\eps\to 0$ to conclude that $\widetilde\w \in \tPhi(M)$. Note also that $\w(t)\le \widetilde\w(t) \le M\w(t)$, $t>0$.
Now, by Stechkin's theorem (see, \eg \cite{DS}*{p. 202}),  if  $\w^*(t):= t^\alpha \sup_{u>t} \widetilde\w(u)/u^\alpha$, then $\w^*\in\Phi^\alpha$ and
$\widetilde\w(t) \le \w^*(t)\le M \widetilde\w(t)$, $t>0$, and the lemma is proved.
\end{proof}

In particular, it immediately follows from
\lem{lem21f}    that, if $f\in C^r$, then $\phi(t) := t^r \w_k(f^{(r)}, t) \sim \phi^*(t)\in \Phi^{k+r}$.

While it is clear that, if  $\phi(t)\in\Phi^\alpha$, then $t^s \phi(t)\in\Phi^{\alpha+s}$, for any $s\ge 0$, this statement is no longer true if $s<0$. In fact, the following stronger result holds.

\begin{lemma} \label{phi1}
For any $\alpha>0$,  there exists $\phi\in\Phi^\alpha$ such that, for any  $0<\beta<\alpha$, $s\in\R$ and $M\ge 1$,
$t^s \phi(t) \not\in \tPhi^\beta(M)$.
\end{lemma}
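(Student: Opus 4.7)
The plan is to build $\phi$ explicitly as an ``extremal'' element of $\Phi^\alpha$, one that uses the full freedom of that class by alternating between constant plateaus (where $\phi$ is as flat as the $\uparrow$ requirement allows) and blocks of maximal growth $\phi(t)\sim t^\alpha$ (where $t^{-\alpha}\phi$ is as flat as the $\downarrow$ requirement allows). Since $\tPhi^\beta(M)$ with $\beta<\alpha$ permits only $\beta$-type growth (up to a multiplicative slack $M$), and these two kinds of blocks will be arranged over intervals whose endpoint ratios tend to $\infty$, multiplying by any power $t^s$ should be unable to bring the result inside $\tPhi^\beta(M)$.

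Concretely, I will fix a strictly decreasing sequence $1=t_0>t_1>t_2>\cdots\searrow 0$ whose consecutive ratios $r_k:=t_k/t_{k+1}$ tend to $\infty$ (\eg $t_k:=2^{-k^2}$, so $r_k=2^{2k+1}$), and define $\phi$ piecewise by $\phi(t):=1$ for $t\ge 1$ and, starting from $\phi(t_0):=1$, by $\phi(t):=\phi(t_k)(t/t_k)^\alpha$ on $[t_{k+1},t_k]$ when $k$ is even (a growth block) and $\phi(t):=\phi(t_k)$ when $k$ is odd (a plateau). Verifying $\phi\in\Phi^\alpha$ is routine: on each piece both $\phi$ and $t^{-\alpha}\phi(t)$ are continuous, with $\phi$ either constant or proportional to $t^\alpha$ and $t^{-\alpha}\phi(t)$ correspondingly either constant or proportional to $t^{-\alpha}$, so each monotonicity condition holds on every piece, and continuity at the $t_k$ (and at $t=1$) propagates the monotonicities to the whole of $(0,\infty)$.

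For the negative conclusion I will fix $0<\beta<\alpha$, $s\in\R$, $M\ge 1$ and test the two defining inequalities of $\tPhi^\beta(M)$ for $\psi(t):=t^s\phi(t)$ at the endpoint pair $(t_1,t_2):=(t_{k+1},t_k)$ of a single block. For $s\ge 0$ I will choose a large even $k$ (growth block): using $\phi(t_{k+1})/\phi(t_k)=r_k^{-\alpha}$, a direct computation gives
\[
\frac{t_1^{s-\beta}\phi(t_1)}{t_2^{s-\beta}\phi(t_2)} \;=\; r_k^{\beta-s}\cdot r_k^{-\alpha} \;=\; r_k^{-(s+\alpha-\beta)},
\]
and since $s+\alpha-\beta\ge\alpha-\beta>0$ while $r_k\to\infty$, this is eventually $<M^{-1}$, violating $t^{-\beta}\psi(t)\dar$. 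For $s<0$ I will choose a large odd $k$ (plateau): since $\phi(t_{k+1})=\phi(t_k)$,
\[
\frac{t_1^{s}\phi(t_1)}{t_2^{s}\phi(t_2)} \;=\; r_k^{-s},
\]
and since $-s>0$ with $r_k\to\infty$, this is eventually $>M$, violating $\psi\uar$. Either way $\psi\not\in\tPhi^\beta(M)$, as required.

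The main (mild) obstacle is the bookkeeping needed to confirm that $\phi\uparrow$ and $t^{-\alpha}\phi(t)\downarrow$ are actually preserved across every transition between blocks and at $t=1$; once this is in place, the failure of $\tPhi^\beta(M)$ reduces to the one-pair computations above, in which the strict inequality $\alpha>\beta$ (in the $s\ge 0$ case) and the negativity of $s$ (in the $s<0$ case) are the only essential hypotheses used, precisely matching the roles played by $0<\beta<\alpha$ and the arbitrariness of $s$ in the statement.
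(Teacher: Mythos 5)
Your proposal is correct and follows essentially the same strategy as the paper's proof: construct $\phi\in\Phi^\alpha$ by alternating constant plateaus with maximal-growth blocks (where $\phi$ is proportional to $t^\alpha$) over intervals whose endpoint ratios tend to infinity, and then observe that the growth blocks force a violation of $t^{-\beta}\psi(t)\dar$ while the plateaus force a violation of $\psi\uar$. Your explicit choice $t_k=2^{-k^2}$ and the clean two-case split on the sign of $s$ are a pleasant simplification of the paper's more implicit construction (which determines the block endpoints by intersecting $\lambda_j t^\alpha$ with the auxiliary curves $u(t)=2/(2-\alpha\ln t)$ and $l_j(t)=t^{(j+1)\alpha/(j+2)}$ and then derives two incompatible constraints $s\ge 0$ and $s+\alpha-\beta\le 0$), but the underlying idea is the same.
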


\begin{proof} Let   $u(t):= 2/(2-\alpha\ln t)$ and $l_0(t):= t^{\alpha/2}$. Note that both $u$ and $l_0$ are increasing on $[0,1]$ and $l_0(t)<u(t)$, $0<t<1$.
We define the sequence $\{(t_j,\phi_j)\}_{j=0}^\infty$ of points as follows. Starting with $(t_0,\phi_0):=(1/2, l_0(1/2))$,
if a point $(t_{2j},\phi_{2j})$ has been defined,    we pick $t_{2j+1}$ to be  such that
$u(t_{2j+1})=\phi_{2j}$, and  $\phi_{2j+1}:= \phi_{2j}$.
Then, we let $(t_{2j+2},\phi_{2j+2})$ be   the point of intersection of the curves $y=\lambda_j t^{\alpha}$ and $y=l_j(t):= t^{(j+1) \alpha/(j+2)}$, where $\lambda_j$ is chosen so that $\lambda_j t_{2j+1}^{\alpha} = u(t_{2j+1})=\phi_{2j}$.
 It is clear from the construction that $\{t_j\}_{j=0}^\infty$ is a strictly decreasing sequence approaching $0$, and we now   define $\phi$ so that
\[
\phi(t):=\begin{cases}
u(t_{2j+1}),\quad&\text{if}\quad t_{2j+1}\le t < t_{2j},\\
\lambda_j t^{\alpha} ,\quad&\text{if}\quad t_{2j+2}\le t\le t_{2j+1},
 \end{cases}
\]
and $\phi(t):=\phi(t_0)$, $t\ge t_0$. Evidently, $\phi\in\Phi^\alpha$ and $\phi(t_j)=\phi_j$, $j\ge 0$.
 Suppose now that, for some $0<\beta<\alpha$, $s\in\R$ and some $M\ge 1$, $\psi(t)=t^s \phi(t)$ is in $\tPhi^\beta(M)$.
Since $\psi(t_{2j+1})=t_{2j+1}^s u(t_{2j+1})$, $j\in\N$, and $\psi(t)\!\uar$, we must have $s\ge 0$. At the same time,
since $\phi_{2j} = l_{j-1}(t_{2j}) = t_{2j}^{j\alpha/(j+1)}$, $j\ge 1$, we have
$\psi(t_{2j}) %=t_{2j}^s \phi(t_{2j})
=    t_{2j}^{s+ j\alpha/(j+1)}$, and so, since $t^{-\beta}\psi(t)\dar$, we must have $t_{2j}^{s+ j\alpha/(j+1)-\beta}  \ge t_{2j_0}^{s+j_0\alpha/(j_0+1)-\beta}/M$, for all $j \ge   j_0 \ge 0$. Since $t_{2j}\to 0$, $j\to \infty$, this yields $s+\alpha-\beta\le 0$.
 Therefore,  $\beta-\alpha  \ge s\ge 0$ which is a contradiction.
\end{proof}

We are now ready to state the following    well known
inequality which is often called in the literature Dzyadyk-Lebed'-Brudnyi inequality.

\begin{lemma}[Dzyadyk-Lebed'-Brudnyi inequality] \label{lebed}
For any $\alpha >0$, $\phi\in\Phi^\alpha$, $s\in\R$,     $n, \nu\in\N$ and  $P_n\in\Pn$, we have
\[
\norm{\rho_n^{s+\nu} P_n^{(\nu)}\phi^{-1}(\rho_n)}{} \le c(\nu, s, \alpha)       \norm{\rho_n^{s} P_n\phi^{-1}(\rho_n)}{} ,
\]
where the constant $c$ may depend only on $\nu$, $s$, and $\alpha$, and  is independent of $n$ and $P_n$.
\end{lemma}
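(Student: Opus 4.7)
The plan is to prove the estimate by induction on $\nu$, with the one-derivative case $\nu=1$ serving as the main technical step. Assuming the bound for $\nu$ and arbitrary $s\in\R$, I would first apply the induction hypothesis to $P_n$ with exponent $s$, and then apply the $\nu=1$ case to the polynomial $Q:=P_n^{(\nu)}\in\Pi_{n-\nu}\subset\Pn$ with exponent $s+\nu$, obtaining
\[
\|\rho_n^{s+\nu+1}P_n^{(\nu+1)}\phi^{-1}(\rho_n)\|\le c(1,s+\nu,\alpha)\,\|\rho_n^{s+\nu}P_n^{(\nu)}\phi^{-1}(\rho_n)\|\le c(\nu+1,s,\alpha)\,\|\rho_n^{s}P_n\phi^{-1}(\rho_n)\|.
\]
Note that it is permissible to keep the same $\rho_n$ throughout the iteration because the $\nu=1$ estimate below holds for every polynomial of degree at most $n$, and $P_n^{(\nu)}$ is in $\Pn$.

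For the base case $\nu=1$, the plan is to start from the classical local Markov-Bernstein inequality
\[
|P_n'(x)|\le c\,\rho_n^{-1}(x)\max_{y\in J_x}|P_n(y)|,\qquad x\in I,
\]
with $J_x:=[x-\rho_n(x),x+\rho_n(x)]\cap I$; its derivation splits into the \emph{Bernstein zone} $|x|\le 1-cn^{-2}$, handled through the substitution $x=\cos\theta$ and the trigonometric Bernstein inequality, and the \emph{Markov zone} near $\pm1$, handled by Markov's inequality applied on a subinterval of $I$ of length $\sim n^{-2}$. To obtain the weighted statement I multiply by $\rho_n^{s+1}(x)\phi^{-1}(\rho_n(x))$ and transfer this weight inside the maximum, using two comparability facts on $J_x$. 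First, a direct calculation from $\rho_n(t)=n^{-1}\sqrt{1-t^2}+n^{-2}$ gives $\rho_n(y)\sim\rho_n(x)$ for $y\in J_x$ with absolute constants, hence $\rho_n^s(y)\sim\rho_n^s(x)$ with constants depending only on $|s|$. Second, $\phi\in\Phi^\alpha$ is doubling: for $\lambda\ge 1$, the condition $(\lambda t)^{-\alpha}\phi(\lambda t)\le t^{-\alpha}\phi(t)$ yields $\phi(\lambda t)\le\lambda^\alpha\phi(t)$, while monotonicity gives $\phi(\lambda t)\ge\phi(t)$, so $\phi(\rho_n(y))\sim\phi(\rho_n(x))$ with constants depending only on $\alpha$. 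Assembling these pieces produces
\[
\rho_n^{s+1}(x)|P_n'(x)|\phi^{-1}(\rho_n(x))\le c\,\max_{y\in J_x}\rho_n^s(y)|P_n(y)|\phi^{-1}(\rho_n(y))\le c\,\|\rho_n^s P_n\phi^{-1}(\rho_n)\|,
\]
and taking $\sup_{x\in I}$ closes the $\nu=1$ case.

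The hard part will be the local Markov-Bernstein inequality: establishing it uniformly on $I$ requires the careful case split between interior and endpoint zones, together with an unweighted local sup appearing on the right-hand side of the right width $\rho_n(x)$. Once this is in hand (or quoted from, e.g., the Dzyadyk-Shevchuk monograph), the remaining steps are a routine combination of doubling for $\phi$ and the comparability of $\rho_n$ on short intervals, and the final constant $c(\nu,s,\alpha)$ naturally accumulates along the induction as a product $\prod_{j=0}^{\nu-1}c(1,s+j,\alpha)$, each factor depending only on $s+j$ and $\alpha$ because the doubling constant of $\phi$ depends only on $\alpha$.
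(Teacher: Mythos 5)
Your induction on $\nu$, and the two comparability facts you invoke (doubling for $\phi\in\Phi^\alpha$; $\rho_n(y)\sim\rho_n(x)$ for $y\in J_x$), are sound. The base case, however, rests on a false inequality. The claimed local Markov--Bernstein estimate
\[
|P_n'(x)|\le c\,\rho_n^{-1}(x)\max_{y\in J_x}|P_n(y)|,\qquad J_x=[x-\rho_n(x),x+\rho_n(x)]\cap I,
\]
with $c$ independent of $n$, does \emph{not} hold. Take $x=0$, $n$ odd, and $P_n(y):=T_n\bigl(y/\rho_n(0)\bigr)$ with $T_n$ the degree-$n$ Chebyshev polynomial. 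Then $P_n\in\Pn$, $\max_{y\in J_0}|P_n(y)|=\max_{|t|\le1}|T_n(t)|=1$, yet $|P_n'(0)|=|T_n'(0)|/\rho_n(0)=n\,\rho_n^{-1}(0)$, so the purported inequality would force $n\le c$. The same rescaling defeats it for any fixed multiple of $\rho_n(x)$ as the window width, so no choice of absolute constants repairs it. This is exactly the pitfall the paper flags immediately after Theorem~\ref{dz59}: a Dzyadyk-type derivative bound cannot be extracted from a majorant that is merely controlled near the point and is allowed to depend on $n$, and the paper exhibits essentially this Chebyshev-based counterexample.

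What rescues the Dzyadyk--Lebed'--Brudny\u{\i} inequality is that the majorant $\rho_n^{-s}(x)\phi(\rho_n(x))$ arising from the weighted norm is not just locally bounded: fixing a point $x_0$, it grows at most like a fixed power of $|x-x_0|+\rho_n(x_0)$ as $x$ moves away from $x_0$, because $\rho_n(x)\lesssim|x-x_0|+\rho_n(x_0)$ and $\phi\in\Phi^\alpha$ satisfies $\phi(\lambda t)\le\lambda^\alpha\phi(t)$ for $\lambda\ge1$. The derivative estimate at $x_0$ then comes from Lemma~\ref{88}, whose hypothesis is precisely such polynomial growth of $|P_n|$ over \emph{all} of $I$, not a local sup over a window of width $\sim\rho_n(x_0)$. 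Passing to the local sup discards exactly the growth information Lemma~\ref{88} exploits, which is why the localized Bernstein inequality is too strong to be true uniformly in $n$. So the fix is not to adjust your two comparability lemmas (those are fine), but to replace the local Markov--Bernstein input by Lemma~\ref{88}, as the paper does (see also \cite{DS}*{pp.~383, 387}).
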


We now give a brief history of \lem{lebed}  which is rather interesting. In 1956, Dzyadyk \cite{D56}*{Theorem $2'$} proved \lem{lebed} with $\phi \equiv 1$. In 1957, Lebed' \cite{L57}*{Theorem 4} (with an obvious misprint: ``$2a/(b-a)$'' in the statement of this theorem should be replaced by ``$2n/(b-a)$'')  established \lem{lebed}  for all norms $L_p$, $1\le p\le \infty$, but only in the case $0<\alpha \le 1$, and it follows from \lem{phi1} that the general case for all $\alpha>0$ cannot be reduced to $0<\alpha \le 1$. In 1959, Brudnyi \cite{B59}*{Theorem $3^*$} stated \lem{lebed} for the first modulus of continuity (\ie  in the case $0<\alpha \le 1$) and $(-s)\in\N_0$. It seems that he was not aware of the work of Lebed' despite the fact that, at that time, both were living and working in Dnipropetrovs'k (currently Dnipro), Ukraine.

In 1959, Dzyadyk \cite{D59} established the following result which, for the interval $I$, can be restated as follows.

\begin{theorem}[see \cite{D59}*{Theorem 3.3}] \label{dz59}
Suppose that a positive continuous function $A$ is defined at all points of $I$, and suppose that a polynomial $P_n\in\Pn$ satisfies
\[
|P_n(x)|\le A(x) , \quad x \in I.
\]
Then, for all $x\in I$,
\be \label{dz2}
|P_n^{(k)}(x)|\le(1+\varepsilon)e k!\frac{ A(x)}{\rho_n^k(x)},
\ee
where $\varepsilon$ uniformly tends to $0$ as $n\to \infty$.
\end{theorem}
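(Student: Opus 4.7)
My plan is to apply Cauchy's integral formula on a complex circle of radius $\rho_n(x_0)$ centered at $x_0\in I$, which mechanically produces the factor $k!/\rho_n^k(x_0)$. Setting $r_0:=\rho_n(x_0)$ and $\Gamma := \{z\in\mathbb{C}:|z-x_0|=r_0\}$, Cauchy's formula
\[
P_n^{(k)}(x_0) \;=\; \frac{k!}{2\pi i}\oint_{\Gamma}\frac{P_n(z)}{(z-x_0)^{k+1}}\,dz
\]
reduces the theorem to establishing the disk bound
\[
\max_{z\in\Gamma}|P_n(z)| \;\le\; (1+\varepsilon)\,e\,A(x_0), \qquad \varepsilon\to 0 \text{ as } n\to\infty \text{ uniformly in } x_0.
\]

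For this disk bound I would combine two ingredients. First, since $A\in C(I)$ is uniformly continuous on the compact interval $I$ and $r_0\to 0$ uniformly in $x_0$, for any $\eta>0$ and all sufficiently large $n$ one has $A(y)\le (1+\eta)A(x_0)$ on the real sub-interval $J := [x_0-r_0,\,x_0+r_0]\cap I$, so $|P_n(y)|\le (1+\eta)A(x_0)$ on $J$. Second, I would invoke a sharp pointwise Bernstein-type growth estimate: for polynomials of degree $\le n$, the supremum on the complex circle $\Gamma$ is controlled, to leading order in $n$, by $e$ times the relevant real values, the factor $e$ emerging from $(1+r_0/\varphi(x_0))^n\sim (1+1/n)^n\to e$ for interior $x_0$, and from the analogous Chebyshev growth $T_n(1+c/n^2)\sim \cosh(\sqrt{2c})$ in the endpoint regime where $\rho_n(x_0)\sim n^{-2}$.

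The main difficulty lies in the \emph{localization}: $P_n$ is only known to satisfy $|P_n|\le A$ \emph{pointwise} on $I$, and $A$ may be much larger than $A(x_0)$ at distant points, so a naive application of Bernstein's ellipse lemma (which takes as input a uniform bound on the full interval) would yield $e\,\|A\|_I$ rather than $e\,A(x_0)$. To separate the local contribution I would represent $P_n(z)$ for $z\in\Gamma$ through a Dzyadyk-type polynomial kernel $K_n(\cdot,z)$ of degree $O(n)$ that is approximately $1$ on $J$ and decays like a negative power of $|y-x_0|/r_0$ off $J$; the contribution to $P_n(z)$ from $y\in I\setminus J$ then reduces to $o(1)\|A\|_I$ and can be absorbed into $\varepsilon$, while the contribution from $y\in J$ is bounded by $(1+\eta)(1+o(1))\,e\,A(x_0)$. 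Letting $\eta\to 0$ slowly with $n$ yields the required circle bound, and Cauchy's formula then delivers \ineq{dz2}.
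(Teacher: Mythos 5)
The paper does not actually prove this theorem --- it is imported as a citation from Dzyadyk's 1959 paper \cite{D59}*{Theorem 3.3} --- so there is no ``paper's own proof'' to compare against, and the proposal must be judged on its own merits.

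Your reduction to the disk bound
\[
\max_{z\in\Gamma}|P_n(z)|\le (1+\varepsilon)\,e\,A(x_0),\qquad \Gamma=\{|z-x_0|=\rho_n(x_0)\},
\]
is where the argument breaks, because this bound is in fact \emph{false} for some $x_0\in I$, already for $A\equiv 1$ and $P_n=T_n$. Take $x_0=1-n^{-2}$. Then $\varphi(x_0)\approx\sqrt{2}/n$ and $\rho_n(x_0)\approx(1+\sqrt{2})/n^{2}$, so the rightmost point of $\Gamma$ is $z_\ast=x_0+\rho_n(x_0)\approx 1+\sqrt{2}\,n^{-2}$. By your own endpoint heuristic $T_n(1+c n^{-2})\to\cosh\sqrt{2c}$, with $c=\sqrt{2}$ one gets
\[
T_n(z_\ast)\longrightarrow \cosh\!\big(2^{3/4}\big)\approx 2.78 \;>\; e\approx 2.72,
\]
and a short check of $|T_n|$ on the rest of $\Gamma$ confirms that the maximum is attained near $z_\ast$. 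So $\max_{\Gamma}|T_n|>e$ while $\|T_n\|_I=1=A(x_0)$, and no $\varepsilon\to 0$ saves the reduction. In other words, you verified that the circle enters the $e$-growth ellipse $E_{e^{1/n}}$ for interior $x_0$, and you considered $x_0=\pm 1$ where the radius is $n^{-2}$ and $\cosh\sqrt{2}<e$ is fine, but you did not check the intermediate regime $\varphi(x_0)\sim n^{-1}$, and there the circle pokes out far enough that the Bernstein--Walsh/Chebyshev growth exceeds $e$.

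Two further concerns, even setting the above aside. First, the growth estimate you invoke (``the supremum on $\Gamma$ is controlled, to leading order in $n$, by $e$ times the relevant real values'') is not a theorem: Bernstein--Walsh gives $|P_n(z)|\le |\phi(z)|^n\|P_n\|_I$ from the \emph{global} sup on $I$, and the Chebyshev comparison $|P_n(z_0)|\le T_n(z_0)\|P_n\|_I$ is only for real $z_0\notin I$; neither statement localizes the real data to $J$, and this is precisely the hard step. Second, the kernel localization is too vague to assess: writing ``$P_n(z)$ through a Dzyadyk-type polynomial kernel $K_n(\cdot,z)$ of degree $O(n)$'' is not a reproducing-kernel identity for $P_n$ at the complex point $z$, and the extra degree of any auxiliary kernel polynomial would alter the very $(1+O(1/n))^n$ computation that produces the constant $e$. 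Note finally that the Cauchy estimate really controls the \emph{mean} of $|P_n|$ over $\Gamma$, not its maximum, which is where a salvageable version of this strategy would have to live; your sketch uses the max and therefore proves a strictly stronger (and, as shown, false) intermediate claim.
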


We emphasize that $\varepsilon$ in \ineq{dz2} is only guaranteed to be bounded by an absolute constant for sufficiently large $n$ (depending on the function $A$), and, for any $E>0$, one may find $n\in\N$ and a function $A$ such that $\varepsilon$ in   \ineq{dz2} is not smaller than $E$.
Indeed, if $T_n(x) = \cos(n \arccos  x)$ is a Chebyshev polynomial and $n\in\N$ is odd, then the polynomial
 $P_n(x):=T_n(nx)$ satisfies $|P_n(x)|\le A(x):=\max\{1,|T_n(x)|\}$.
At the same time, $|P_n'(0)|=n^2 > n \rho_n^{-1}(0) A(0)$, and so $\varepsilon > n/e-1$.

 Hence, one may not apply \thm{dz59}   if $A$ is allowed to depend on $n$. Nevertheless, the same idea as was used in the proof of this theorem in \cite{D59} yields the following result.
 (For its alternative proof   see \cite{KLS-cjm}*{Lemma 5.2}.)

\begin{lemma}[Dzyadyk inequality, see  \cite{DS}*{p. 386}] \label{88}
Suppose that $m\in\N$  and $x_0\in I$. If
\[
|P_n(x)|\le(|x-x_0|+\rho_n(x))^m,\quad x\in I,
\]
then
\[
|P_n'(x_0)|\le c(m) \rho_n^{m-1}(x_0) .
\]
\end{lemma}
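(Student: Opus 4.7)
The plan is to prove the pointwise derivative bound by applying the Cauchy integral formula on a small complex circle around $x_0$, combined with a complex-plane extension of the given real-line bound on $P_n$. Set $h:=\rho_n(x_0)$. If one establishes
\[
|P_n(z)|\le C(m)\,h^m\quad\text{for all } z\in\gamma:=\{z\in\mathbb{C}:|z-x_0|=h\},
\]
then Cauchy's formula $P_n'(x_0)=(2\pi i)^{-1}\oint_\gamma P_n(z)(z-x_0)^{-2}\,dz$ immediately yields $|P_n'(x_0)|\le C(m)\,h^{m-1}=C(m)\,\rho_n^{m-1}(x_0)$, which is exactly the conclusion.

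The real points on $\gamma$ are easy: for $z\in\gamma\cap\R\subset I$, the Lipschitz bound $|\varphi(z)-\varphi(x_0)|\le|z-x_0|=h$ (immediate when $|x_0|\le 1-h$, and verified by a direct computation when $x_0$ is within $h$ of an endpoint) gives $\rho_n(z)\le 2h$, so the hypothesis of the lemma yields $|P_n(z)|\le(h+2h)^m=3^m h^m$ at once. The nontrivial part is to extend this to the off-axis points of $\gamma$, and here I would invoke the classical Dzyadyk localization technique: produce an auxiliary polynomial $D\in\Pi_{c_0 n}$ (a ``Dzyadyk kernel''), with $c_0$ an absolute constant depending only on $m$, such that (i) $D(x_0)=1$; (ii) $|D(x)|\,(|x-x_0|+\rho_n(x))^m\le C\,h^m$ for all $x\in I$; and (iii) $|D(z)|\ge c>0$ on the closed disk $\overline{B(x_0,h)}\subset\mathbb{C}$. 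The product $Q:=P_n D\in\Pi_{(c_0+1)n}$ then obeys $\|Q\|_I\le C h^m$, and the Bernstein--Walsh inequality (whose exponential factor $|\Phi(z)|^{(c_0+1)n}$ is bounded by an absolute constant on $\overline{B(x_0,h)}$, since $h\le 2n^{-1}$ keeps this disk in a conformal neighborhood of $I$ of width $O(n^{-1})$) extends this to $|Q(z)|\le C' h^m$ throughout $\overline{B(x_0,h)}$; dividing by $|D(z)|\ge c$ delivers the desired complex bound on $\gamma$.

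The main obstacle is the construction of the kernel $D$ and the verification of (i)--(iii). The classical recipe, going back to \cite{D59}, takes $D$ to be a suitable normalization of an $m$-th power of a polynomial partial sum of $(|t-x_0|+h)^{-1}$ on $I$, with the three conditions checked via Bernstein-type estimates after the substitution $x=\cos\theta$; this is technical but standard. A somewhat cleaner alternative is the route sketched in \cite{KLS-cjm}*{Lemma~5.2}, which packages essentially the same extremal estimate into a more transparent identity. Once such a $D$ is in hand, the remainder of the argument is the routine Cauchy-plus-Bernstein--Walsh computation above, and no further dependence on $n$ appears in the constants.
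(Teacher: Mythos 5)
The paper does not prove this lemma; it refers to \cite{DS}*{p.~386} and to the alternative proof in \cite{KLS-cjm}*{Lemma~5.2}, remarking only that it follows from ``the same idea'' as in Dzyadyk's complex-analytic argument in \cite{D59}. Your plan --- Cauchy's formula on a small circle about $x_0$, a Dzyadyk localization kernel $D$, and Bernstein--Walsh to transfer the real bound on $P_nD$ into the complex disk --- is exactly that idea, and the architecture is sound.

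Two details are not right as written. First, the claim $|\varphi(z)-\varphi(x_0)|\le |z-x_0|$ is false in general, even when $|x_0|\le 1-h$: take $x_0=1-2h$ and $z=1-h$, for which $|\varphi(z)-\varphi(x_0)|\sim\sqrt{h}\gg h$. The correct route to the real-point bound is $|\varphi^2(z)-\varphi^2(x_0)|=|z-x_0|\,|z+x_0|\le 2h$, hence $\varphi(z)\le\varphi(x_0)+\sqrt{2h}$ and, using $n^{-2}\le h=\rho_n(x_0)$, one gets $\sqrt{2h}/n\le\sqrt{2}\,h$ and therefore $\rho_n(z)\le(1+\sqrt{2})\,h$; that suffices. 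Second, and more substantively, your kernel property (iii) --- a lower bound $|D(z)|\ge c>0$ on the \emph{full} disk $\overline{B(x_0,h)}$ --- is an overstatement: the standard construction gives $D(x_0)=1$ together with the real-line decay (ii), and Bernstein--Walsh then yields an \emph{upper} bound $|D(z)|\le C(m)$ on $\overline{B(x_0,h)}$, from which Cauchy's estimate gives only $|D(z)-1|\le C'(m)\,|z-x_0|/h$. This furnishes $|D(z)|\ge 1/2$ merely on a disk of radius $\varepsilon(m)h$ with $\varepsilon(m)$ small. The fix is harmless --- integrate over the circle $|z-x_0|=\varepsilon(m)h$ instead, where both the lower bound on $|D|$ and the upper bound $|P_n(z)D(z)|\le C(m)h^m$ are available, yielding $|P_n'(x_0)|\le C(m)\varepsilon(m)^{-1}h^{m-1}$ --- but as stated (iii) is unjustified, and you should either weaken it to the smaller disk or exhibit a kernel for which the stronger form actually holds.
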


\lem{88} is rather powerful and can be used to establish various general results involving uniform norms of polynomials and their derivatives. In particular, it can be used to almost immediately obtain \lem{lebed} as stated (see \cite{DS}*{pp. 383, 387}).

We remark that \lem{lebed} immediately implies what seems to be a stronger result.

\begin{corollary}  \label{dzstrong}
For any $\alpha, \co >0$, $\phi\in\Phi^\alpha$, $s, \mu\in\R$,     $n, \nu\in\N$ and  $P_n\in\Pn$, we have
\be \label{strin}
\norm{\rho_n^{s+\nu} P_n^{(\nu)}\phi^{-1}(\co \rho_n^{\mu})}{} \le c(\nu, s, \alpha,\mu)       \norm{\rho_n^{s} P_n\phi^{-1}(\co\rho_n^{\mu})}{} ,
\ee
where the constant $c$ may depend only on $\nu$, $s$, $\alpha$ and $\mu$, and   is independent of $n$, $P_n$ and $\co$.
\end{corollary}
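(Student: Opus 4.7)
The plan is to reduce \cor{dzstrong} to \lem{lebed} by constructing, for each $\mu\in\R$, an auxiliary function $\widetilde\phi$ that lies in some $\Phi^{\alpha'}$ and whose reciprocal at $\rho_n$ reproduces (possibly up to an explicit power of $\rho_n$) the weight $\phi^{-1}(\co\rho_n^{\mu})$ appearing in \ineq{strin}.

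For $\mu\ge 0$ the construction is direct. When $\mu=0$ the weight $\phi^{-1}(\co)$ is a positive constant that cancels from both sides, and \ineq{strin} reduces to the special case of \lem{lebed} applied with the constant function $\phi\equiv 1\in\Phi^\alpha$. When $\mu>0$, I would set $\widetilde\phi(t):=\phi(\co t^\mu)$, which is nondecreasing by composition, while
\[
t^{-\alpha\mu}\widetilde\phi(t)=\co^\alpha(\co t^\mu)^{-\alpha}\phi(\co t^\mu)
\]
is nonincreasing because $u^{-\alpha}\phi(u)\downarrow$ and $t\mapsto\co t^\mu$ is nondecreasing. Hence $\widetilde\phi\in\Phi^{\alpha\mu}$, and since $\widetilde\phi^{-1}(\rho_n)=\phi^{-1}(\co\rho_n^\mu)$, \lem{lebed} applied with $(\widetilde\phi,\alpha\mu)$ in place of $(\phi,\alpha)$ delivers \ineq{strin}.

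The main obstacle is the case $\mu<0$, since then the naive $\widetilde\phi(t)=\phi(\co t^\mu)$ is nonincreasing in $t$ and cannot lie in any $\Phi^{\alpha'}$. I would remedy this by defining
\[
\widetilde\phi^*(t):=t^{\alpha|\mu|}\phi(\co t^\mu)
\]
and verifying that $\widetilde\phi^*\in\Phi^{\alpha|\mu|}$. The decay condition $t^{-\alpha|\mu|}\widetilde\phi^*(t)=\phi(\co t^\mu)\downarrow$ is immediate for $\mu<0$. Monotonicity follows from the quasi-power bound $\phi(u_2)/\phi(u_1)\ge (u_2/u_1)^\alpha$ (valid for $u_1>u_2$ since $u^{-\alpha}\phi(u)\downarrow$), applied with $u_i=\co t_i^\mu$ and $t_1<t_2$; it rearranges to $\widetilde\phi^*(t_1)\le\widetilde\phi^*(t_2)$. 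Applying \lem{lebed} to $(\widetilde\phi^*,\alpha|\mu|)$ with $s$ replaced by $s+\alpha|\mu|$ (permitted since $s$ is arbitrary there), the factor $\rho_n^{-\alpha|\mu|}$ in $(\widetilde\phi^*)^{-1}(\rho_n)=\rho_n^{-\alpha|\mu|}\phi^{-1}(\co\rho_n^\mu)$ exactly absorbs this shift and yields \ineq{strin} in full generality.

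In each case the constant $c$ inherited from \lem{lebed} depends only on $\nu$, $s$, $\alpha$, $\mu$ and not on $n$, $P_n$, or $\co$: the only $\co$-dependence in $\widetilde\phi$ or $\widetilde\phi^*$ enters through the scaling $u=\co t^\mu$ used to verify membership in $\Phi^{\alpha'}$, and does not propagate into the constant of \lem{lebed}, which depends solely on the exponents.
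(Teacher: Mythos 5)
Your proposal is correct and follows essentially the same route as the paper: in both, the case $\mu=0$ is just the classical Dzyadyk inequality, and for $\mu\neq 0$ the auxiliary function $\psi(t)=\phi(\co t^\mu)$ (for $\mu>0$) or $\psi(t)=t^{-\alpha\mu}\phi(\co t^\mu)=t^{\alpha|\mu|}\phi(\co t^\mu)$ (for $\mu<0$) is checked to lie in $\Phi^{\alpha|\mu|}$ and fed into \lem{lebed}, with the free parameter $s$ absorbing the extra power $\rho_n^{-\alpha|\mu|}$ when $\mu<0$. You have merely written out explicitly the verification of $\psi\in\Phi^{\alpha|\mu|}$ and the $s$-shift, which the paper leaves as ``straightforward to check.''
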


\begin{proof} First, note that if $\mu=0$, then \ineq{strin} is the classical Dzyadyk inequality \cite{D56}*{Theorem $2'$}.
If $\mu\neq 0$, we define
\[
\psi(t):=
\begin{cases}
\phi(\co t^\mu) , & \text{if }\; \mu>0 , \\
t^{-\alpha \mu} \phi(\co t^\mu)  , & \text{if }\; \mu<0 ,
\end{cases}
\]
and  it is straightforward to check that $\psi \in \Phi^{\alpha |\mu|}$. It remains to use \lem{lebed} with $\phi$ replaced by $\psi$.
\end{proof}

We note that \cor{dzstrong} (with $\co = n^{-\lambda}$ and $\mu=1-\lambda$)
 is an improvement of the following result by Ditzian and Jiang  \cite{DJ}*{Theorem 4.1}

\begin{lemma}[  \cite{DJ}*{Theorem 4.1}] %\label{ditjiang}
Let $\alpha>0$, $n\in\N$ and  $\phi\in\Phi^\alpha$. Then, for every $P_n\in\Pn$,  $s\in\R$,  $0\le\lambda\le 1$  and $\nu \ge -s+\alpha(1-\lambda)$, we have
\[
\norm{ \rho_n^{s+\nu}  P_n^{(\nu)} \phi^{-1}\left(n^{-\lambda}\rho_n^{1-\lambda} \right) }{} \le c(l,s,\alpha,\lambda) \norm{ \rho_n^{s} P_n  \phi^{-1}\left(n^{-\lambda}\rho_n^{1-\lambda} \right) }{}.
\]
\end{lemma}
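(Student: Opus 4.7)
The plan is to deduce this lemma directly from Corollary~\ref{dzstrong} by a single substitution of parameters. Specifically, I would set $\co := n^{-\lambda}$ and $\mu := 1-\lambda$; since $0\le\lambda\le 1$, we have $\mu\ge 0$ and $\co>0$, so every hypothesis of Corollary~\ref{dzstrong} is met with $\phi\in\Phi^\alpha$, $s\in\R$, $\nu\in\N$ and $P_n\in\Pn$ as given. The weight appearing in the corollary then becomes
\begin{equation*}
\phi^{-1}(\co\,\rho_n^\mu) \;=\; \phi^{-1}\bigl(n^{-\lambda}\rho_n^{1-\lambda}\bigr),
\end{equation*}
which is precisely the weight in the lemma to be proved.

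Invoking Corollary~\ref{dzstrong} with these choices then yields, in one line,
\begin{equation*}
\norm{\rho_n^{s+\nu}P_n^{(\nu)}\phi^{-1}\bigl(n^{-\lambda}\rho_n^{1-\lambda}\bigr)}{} \;\le\; c(\nu,s,\alpha,\mu)\,\norm{\rho_n^{s}P_n\phi^{-1}\bigl(n^{-\lambda}\rho_n^{1-\lambda}\bigr)}{},
\end{equation*}
with a constant $c$ that, crucially, is independent of $n$ and of $\co = n^{-\lambda}$. Rewriting $c(\nu,s,\alpha,\mu)=c(\nu,s,\alpha,\lambda)$ produces the stated inequality.

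The only point worth emphasizing is why no extra hypothesis on $\nu$ is needed here, in contrast to the condition $\nu\ge -s+\alpha(1-\lambda)$ present in Ditzian and Jiang's original formulation. In the proof of Corollary~\ref{dzstrong}, the auxiliary weight $\psi(t):=\phi(\co\,t^\mu)$ (or its variant $t^{-\alpha\mu}\phi(\co\,t^\mu)$ when $\mu<0$) was shown to lie in $\Phi^{\alpha|\mu|}$, and Lemma~\ref{lebed} (the Dzyadyk-Lebed'-Brudnyi inequality) was then applied to $\psi$. Since Lemma~\ref{lebed} places no lower bound on $\nu$ beyond $\nu\in\N$, none propagates to the current lemma. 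Consequently there is no real obstacle: the expected technical challenge was already surmounted in establishing Corollary~\ref{dzstrong}, and the Ditzian-Jiang lemma drops out as an immediate specialization.
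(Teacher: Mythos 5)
Your proof is correct and follows exactly the route the paper indicates: the text immediately preceding the lemma notes that Corollary~\ref{dzstrong} with $\co=n^{-\lambda}$ and $\mu=1-\lambda$ yields (in fact, improves) the Ditzian--Jiang result, and you have simply spelled out that substitution. Your observation that the hypothesis $\nu\ge -s+\alpha(1-\lambda)$ becomes superfluous is also precisely the ``improvement'' the paper alludes to.
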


\subsection{Proof of \lem{traux}} \label{simultsect}

It is well known (see,  \eg  \cites{Tr62} and \cite{DS}*{Theorem 7.3.3}) that, for any $n\ge k+r-1$, there exists a polynomial $Q_n\in\Pn$ such that
estimates \ineq{tr1} and \ineq{tr2} (with $A=1$ and $P_n$ replaced by $Q_n$) are both satisfied for all $x\in I$.
Suppose now that  $P_n\in\Pn$ satisfies \ineq{classKK}   and denote $R_n:= Q_n-P_n$.
Then,
\[
|R_n(x)| \le c A \rho_n^r(x)   \w_k(f^{(r)}, \rho_n(x) ),  \quad x\in I,
\]
and applying \lem{lebed} with $s=0$, $\alpha =k+r$ and  $ t^r \w_k(f^{(r)}, t)  \sim \phi(t)\in\Phi^{k+r}$,  we conclude that, for any $\nu\in\N$,                 %and $\nu=r$ and $\nu=k+r$, we conclude
\[
|R_n^{(\nu)}(x)| \le c A \rho_n^{r-\nu}(x) \w_k(f^{(r)}, \rho_n(x))    , \quad x\in I.
\]
Hence, for $x\in I$,
\begin{align*}
|f^{(\nu)}(x)-P_n^{(\nu)}(x)| & \le |f^{(\nu)}(x)-Q_n^{(\nu)}(x)| + |R_n^{(\nu)}(x)|  \\
& \le c A \rho_n^{r-\nu}(x) \w_k(f^{(r)}, \rho_n(x))  , \quad   0\le \nu\le r,
\end{align*}
and
\[
|P_n^{(k+r)}(x)| \le |Q_n^{(k+r)}(x)| + |R_n^{(k+r)}(x)| \le c A \rho_n^{-k}(x) \w_{k } (f^{(r)}, \rho_n(x)) ,
\]
and the lemma is proved.

\sect{Proof of \thm{mainnew1}} \label{sec3}

We start by recalling the definition  of the  Lagrange-Hermite divided difference of $f$ of order $m$ at the knots $Y = \{y_j\}_{j=0}^m$ for which we use the notation
 $[y_0,\dots,y_m;f]$ (see, \eg  \cite{DL}*{Section 4.7} or \cite{DS}*{Section 3.8.3}).
Given $m\in\N_0$, if
  $y_0=\dots=y_m$, then $[y_0, \dots, y_m; f] = f^{(m)}(y_0)/m!$.
Otherwise,   $y_0\ne y_{j^*}$, for some  $j^*$, and
\[
[y_0,\dots,y_m;f]:=\frac1{y_{j^*}-y_0} \left([y_1,\dots,y_{m};f]-[y_0,\dots,y_{j^*-1},y_{j^*+1},\dots,y_m;f]\right).
\]
Recall that   $[y_0,\dots,y_m;f]$ is symmetric in $y_0, \dots, y_m$ (\ie it does not depend on how the points from $Y$ are numbered).
Then the Lagrange-Hermite polynomial $L_m(\cdot; f, Y)$ of degree $\le m$ that satisfies
\[
L_m^{(l_j-1)}(y_j; f, Y) = f^{(l_j-1)}(y_j) , \quad \text{for all }\;   0\le j\le m ,
\]
may be written as
\[
L_m(x; f, Y)   := f(y_0)+  \sum_{j=1}^m [y_0, \dots, y_j; f](x-y_0)\dots (x-y_{j-1}) .
\]
In particular, this implies
\be \label{new}
f(x)-L_m(x; f, Y) = [x, y_0, \dots, y_m; f] \prod_{j=0}^m (x-y_j) , \quad \text{for }\; x\not\in Y.
\ee
The main property of divided differences that we need in this section is that, if all $y_j$'s lie inside some interval $J$ and $f\in C^m(J)$, then $[y_0, \dots, y_m; f] = f^{(m)}(\theta)/m!$, for some $\theta\in J$.

We   now prove the following lemma and then show that \thm{mainnew1} immediately follows from it.

\begin{lemma} %\label{mainnew2}
Let $k\in\N$,  $r,m,n\in\N_0$,   $m\le r$,  $f\in C^r$, $P_n\in\KK$,   $Y=\{y_j\}_{j=0}^{m} \subset I$
and $x\in I$ be given. If, for all $0\le j \le m$,
 \be\label{3333}
P_{n}^{(l_{j}-1)}(y_{j}) = f^{(l_j-1)}(y_{j}),
\ee
and
\[
|x-y_j|\le|x-y_m|\le\rho_n(x),
\]
 then
 \begin{align} \label{783}
| & f(x)-P_n(x)| \nonumber \\
& \le c(k,r)A
\begin{cases}
|p_m(x)|\rho_n^{r-m-1}(x)  \omega_k(f^{(r)}, \rho_n(x)) , & \text{if }\; m \le r-1 ,\\
|p_{r-1}(x)| \omega_k(f^{(r)}, |x-y_r|^{1/k}  \rho_n^{1-1/k}(x)) , &\text{if }\; m=r,
\end{cases}
\end{align}
where
\[
p_m(x):=\prod_{j=0}^m(x-y_j).
\]
\end{lemma}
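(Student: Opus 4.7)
The plan is to start with the Hermite interpolation hypothesis \ineq{3333}: since $P_n$ Hermite-interpolates $f$ at $Y$, one has $L_m(\cdot;f,Y)=L_m(\cdot;P_n,Y)$, so applying \ineq{new} to both $f$ and $P_n$ and subtracting yields
\[
f(x)-P_n(x)=[x,y_0,\dots,y_m;g]\,p_m(x),\qquad g:=f-P_n\in C^r.
\]
Setting $D:=|[x,y_0,\dots,y_m;g]|$, it suffices to bound $D$. Since $|x-y_j|\le\rho_n(x)$ for all $j$, every $y_j$ lies in $J_\rho:=[x-\rho_n(x),x+\rho_n(x)]\cap I$, on which $\rho_n$ is comparable to $\rho_n(x)$.

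For $m\le r-1$, one has $g\in C^{m+1}$, so the Mean Value Theorem for divided differences gives $D=|g^{(m+1)}(\theta)|/(m+1)!$ for some $\theta$ in the convex hull of $\{x,y_0,\dots,y_m\}\subset J_\rho$. Applying \lem{traux} at order $\nu=m+1$ yields $|g^{(m+1)}(\theta)|\le c(k,r)A\rho_n^{r-m-1}(x)\w_k(f^{(r)},\rho_n(x))$, and multiplication by $|p_m(x)|$ proves the first branch of \ineq{783}.

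For $m=r$, the divided difference has order $r+1$ while $g\in C^r$ only, so I would apply the divided-difference recursion to write $D=([x,y_0,\dots,y_{r-1};g]-[y_0,\dots,y_r;g])/(x-y_r)$. Each of the two inner divided differences is of order $r$, so the MVT represents them as $g^{(r)}(\theta_i)/r!$ with $\theta_i\in J_h:=[x-h,x+h]\cap I$ and $h:=|x-y_r|$. Consequently
\[
D=\frac{g^{(r)}(\theta_1)-g^{(r)}(\theta_2)}{r!(x-y_r)},
\]
and after multiplying by $|p_r(x)|=|x-y_r|\,|p_{r-1}(x)|$, the second branch of \ineq{783} reduces to proving
\[
|g^{(r)}(\theta_1)-g^{(r)}(\theta_2)|\le c(k,r)A\,\w_k(f^{(r)},u),\qquad u:=h^{1/k}\rho_n^{1-1/k}(x).
\]

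This last estimate is the heart of the proof and the main obstacle: its left-hand side is formally a first-order difference of a merely $C^0$ function, yet the right-hand side involves only the $k$-th modulus of $f^{(r)}$ at the small argument $u\le\rho_n(x)$. My plan is to combine (i) smoothness control on $P_n^{(r)}$ via $(P_n^{(r)})^{(k)}=P_n^{(k+r)}$ and the bound $\|P_n^{(k+r)}\|_{J_\rho}\le cA\rho_n^{-k}(x)\w_k(f^{(r)},\rho_n(x))$ from \lem{traux}, which together with the quasi-decreasing property of $\w_k(F,t)/t^k$ gives $\w_k(g^{(r)},s;J_\rho)\le c(k)A\,\w_k(f^{(r)},s)$ for $s\le 2\rho_n(x)$, with (ii) Marchaud's inequality on $J_\rho$, which converts $\w_1(g^{(r)},2h;J_\rho)$ into an integral of $\w_k(g^{(r)},s;J_\rho)/s^2$ over $[2h,2\rho_n(x)]$ plus a boundary term in $\|g^{(r)}\|_{J_\rho}$. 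Splitting the integral at $s=u$ and exploiting the identity $u^k=h\rho_n^{k-1}(x)$ makes the $h$'s telescope and produces $c(k,r)A\,\w_k(f^{(r)},u)$; the boundary term is controlled similarly, since $(h/\rho_n(x))\w_k(f^{(r)},\rho_n(x))=(u/\rho_n(x))^k\w_k(f^{(r)},\rho_n(x))\le 2^k\w_k(f^{(r)},u)$. The case $k=1$, where Marchaud is trivial, is handled directly by splitting $g^{(r)}=f^{(r)}-P_n^{(r)}$ and invoking \lem{traux} to bound $\|P_n^{(r+1)}\|$ (here $P_n^{(r+1)}=P_n^{(k+r)}$) together with the quasi-decreasing property of $\w_1(F,t)/t$.
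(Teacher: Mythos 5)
Your argument is correct and follows essentially the same route as the paper: the same divided-difference identity $f(x)-P_n(x)=[x,y_0,\dots,y_m;g]\,p_m(x)$ with $g=f-P_n$, the mean-value theorem plus \lem{traux} at $\nu=m+1$ for $m\le r-1$, and for $m=r$ the reduction to $\omega_1(g^{(r)},|x-y_r|;J_n)$ followed by the $\omega_k(g^{(r)},\cdot;J_n)\le cA\,\omega_k(f^{(r)},\cdot)$ bound and Marchaud's inequality (with the split point $\eta=u=h^{1/k}\rho_n^{1-1/k}(x)$ and the $(u/\rho)^k\omega_k(f^{(r)},\rho)\le 2^k\omega_k(f^{(r)},u)$ control of the boundary term). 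No gaps; the presentation matches the paper's proof in both structure and content.
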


\begin{proof}  Throughout the proof, it is convenient to denote $\rho := \rho_n(x)$,   $\uw(t) := \w_k(f^{(r)}, t)$ and
  $J_n := [x -\rho , x +\rho ]\cap I$.
  Note that $\rho  \sim \rho_n(\theta)$, for every $\theta\in J_n$.

If $x\in Y$, there is nothing to prove, so we assume that $x\notin Y$. Put $g:=f-P_n$ and note that   \ineq{new} implies
\be\label{334}
g(x)=[x,y_0\dots,y_m;g]p_m(x),
\ee
 since \ineq{3333} implies that $L_{m}(\cdot; g, Y)\equiv 0$.

If $m\le r-1$, then $g\in C^{m+1}$, and so
there is a point $\theta\in J_n$, such that
$[x,y_0\dots,y_m;g]=g^{(m+1)} (\theta)/(m+1)!$,
 and it follows from \ineq{334} and  \ineq{tr1} with $\nu=m+1$  that
\begin{align*}
|g(x)|&=\frac{|g^{(m+1)} (\theta)|}{(m+1)!}|p_m(x)|\le cA|p_m(x)| \rho_n^{r-m-1}(\theta) \uw(\rho_n(\theta)) \\
& \le cA|p_m(x)| \rho^{r-m-1} \uw(\rho).
\end{align*}
Hence, \ineq{783} is proved for $m\le r-1$.

If $m=r$,
denoting by $J$ the smallest interval   containing $x$ and all $y_j$'s and using the fact that
 $g\in C^r$, we conclude that there are $\theta_1, \theta_2\in J$  such that
\[
[x,y_0,\dots,y_r;g]=\frac{[x,y_0,\dots,y_{r-1};g]-[y_0,\dots,y_{r};g]}{x-y_r}=\frac1{r!}\frac{g^{(r)}(\theta_1)-g^{(r)}(\theta_2)}{x-y_r}.
\]
Since $|\theta_1- \theta_2| \le |J| \le2|x-y_r|$, together with \ineq{334},  this implies
\begin{align} \label{nice}
|f(x)-P_n(x)| & =\frac1{r!}|g^{(r)}(\theta_1)-g^{(r)}(\theta_2)|\frac{|p_r(x)|}{|x-y_r|} \nonumber \\
& \le  c|p_{r-1}(x)|\omega_1(g^{(r)},|x-y_r|;J_n).
\end{align}
Now,   estimate
\ineq{tr2}  yields, for any $0<t\le  2\rho$ and   some $\theta\in J_n$,
\begin{align*}
\omega_k(P_n^{(r)},t;J_n) &\le  c t^k|P_n^{(r+k)}(\theta)|\le cAt^k\frac{\omega_k(f^{(r)},\rho_n(\theta))}{\rho_n^k(\theta)}\le cA\uw(t).
\end{align*}
Hence,
\be \label{22}
\omega_k(g^{(r)},t;J_n)\le \uw(t)+\omega_k(P_n^{(r)},t; J_n)\le cA\uw(t), \quad 0<t\le  2\rho,
\ee
and letting $t := |x-y_r|$ and using   \ineq{nice}, we obtain  \ineq{783} in the case $k=1$ and $m=r$.

If $m=r$ and $k\ge 2$, we use the well known Marchaud inequality (see,  \eg  \cite{DL}*{Theorem 2.8.1})
\[
\omega_1(g^{(r)},t ; J_n )\le ct\int_t^{|J_n|}\frac{\omega_k(g^{(r)},u;J_n )}{u^2}\, du + c t |J_n|^{-1}   \norm{g^{(r)}}{J_n } , \quad 0<t\le  \rho .
\]
Estimate \ineq{tr1} with $\nu=r$ implies that  $\norm{g^{(r)}}{J_n } \le cA \norm{\uw(\rho_n(\cdot))}{J_n} \le cA \uw(\rho)$.
Hence, if  $0<t<  \rho $ and   $\eta\in[t, \rho]$,  then  applying
  \ineq{22} and the inequality $u_2^{-k} \uw(u_2) \le 2^k u_1^{-k} \uw(u_1)$, $0<u_1 < u_2$, and using $\rho\le |J_n|\le 2\rho$,  we get
\begin{align*}
A^{-1}  \omega_1(g^{(r)},t ;J_n )&\le  ct\int_t^{2\rho}\frac{ \uw(u)}{u^2}\, du +
c t \rho^{-1}  \uw(\rho)\\
&\le ct\left(\int_t^{\eta}+\int_{\eta}^{\rho}\right)   \frac{\uw(u)}{u^2}\, du +
c t \rho^{-1}\uw(\rho)\\
& \le ct \uw(\eta)\int_t^{\infty} u^{-2} \, du +  ct \eta^{-k} \uw(\eta) \int_0^{\rho}  u^{k-2} \, du +  c t \rho^{k-1}    \eta^{-k} \uw(\eta)      \\
&\le c\uw(\eta)\left(1+  t \rho^{k-1} \eta^{-k} \right).
%\frac t{\eta^k}\int_{\eta}^{n^{-2}}u^{k-2}\,du+n^{2-2k}\frac t{\eta^k}\right).
\end{align*}
Hence, for   $t = |x-y_r|$ and $\eta = |x-y_r|^{1/k} \rho^{1-1/k}$, we have
\[
   \omega_1(f^{(r)}-P_n^{(r)}, |x-y_r| ;J_n ) \le c A \uw\left( |x-y_r|^{1/k} \rho^{1-1/k}\right) ,
\]
which combined with \ineq{nice} implies \ineq{783} in the case $k\ge 2$ and $m=r$.
 \end{proof}

\begin{proof}[Proof of  \thm{mainnew1}]
First, since the estimate \ineq{78} in the case $s\ge r+1$ depends only on $r+1$ points from $Y$ which are closest to $x$, without loss of generality, we may assume that $s\le r+1$.
Now, let   $n\in\N_0$, $1\le s\le r+1$,  $Y =\{y_j\}_{j=0}^{s-1}$ and $x\in I$ be given. If $[x-\rho_n(x), x+\rho_n(x)] \cap Y = \emptyset$, then \ineq{78} follows from \ineq{classKK}.
Otherwise,
let $m\in\N_0$ be the largest number $\le s-1$  such that $|x-y_{\sigma_m(x)}|\le\rho_n(x)$.
Then, either (i) $m=s-1$ and $Y \subset [x-\rho_n(x), x+\rho_n(x)]$, or (ii) $m\le s-2$ and $|x-y_{\sigma_{m+1}(x)}| > \rho_n(x)$.
In the case (i), estimates \ineq{78} and \ineq{783} are identical (with an obvious change if $y_0$ is the farthest point from $x$ instead of $y_m$). In the case (ii), $m\le r-1$ and $\DD_{s-1}(x, Y) \ge  \DD_{m}(x, Y) \rho_n^{s-m-1}(x)$.
Therefore, \ineq{78} follows from \ineq{783} with $m\le r-1$ taking into account that, if $s=r+1$, then $|x-y_{\sigma_r(x)}| \ge |x-y_{\sigma_{m+1}(x)}| > \rho_n(x)$.
\end{proof}

\sect{Proof of \thm{another}} \label{thproof}

%The idea of the proof is the same as was used in \cites{hky, GS}. Namely,

Given $f\in C^r$ and a set $Y = \{y_j\}_{j=0}^{s-1} \subset I$, we first use \lem{umzhlem} to construct a piecewise polynomial function $S$ having the right local order of approximation and Hermite interpolating $f$ at the points in $Y$. We then approximate $S$ by a polynomial satisfying all conditions of \thm{another}.

Let
$ x_{j}:= x_{j,n} := \cos(j\pi/n)$, $0 \leq j \leq n$, denote the Chebyshev nodes, and let $I_j := I_{j,n} := [x_{j}, x_{j-1}]$,
\[
\psi_{j}(x) := \psi_{j,n}(x) := \frac{ |I_j|}{|x-x_j|+|I_j|} \andd
\chi_j(x) :=\chi_{j,n} (x) :=
\begin{cases}
1 , & \text{if }\;   x\geq x_j , \\
0,  & \text{otherwise.}
\end{cases}
\]
 where $1 \leq j\leq n$.

We start with the following lemma which is an immediate consequence of \cite{hky}*{Corollary 17}.

\begin{lemma}  \label{h15}
Let   $n,\mu\in\N$,   $1\leq j\leq n-1$,
and let the numbers
$a$ and $b$ be such that $-1\leq a \leq x_{j+1}$ and $x_{j-1}\leq b\leq 1$.
Then,
 there exists a polynomial
$T_j(x) := T_j(x; a, b)$ of degree
$\leq c(\mu)n$ such that
\[ % \label{hky1}
T_j (a) = \chi_j(a) = 0 , \quad T_j (b) = \chi_j(b) = 1 ,
%T_j (\xi) = \chi_j(\xi) ,  \quad \text{ for }\; \xi=-1,a,b,1,
\]
 and
\[ % \label{hky2}
|\chi_j(x)- T_j (x)| \leq c(\mu) \psi_j^\mu(x), \quad x\in I.
\]
\end{lemma}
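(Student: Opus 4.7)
The plan is to apply \cite{hky}*{Corollary 17} with parameter $\mu^*:=\mu+2$ (absorbing the constant into $c(\mu)$) to obtain a polynomial $\widetilde T_j$ of degree $\le c(\mu)n$ satisfying
\[
|\chi_j(x)-\widetilde T_j(x)| \le c(\mu)\psi_j^{\mu+2}(x), \quad x\in I,
\]
and then correct $\widetilde T_j$ at the two points $a,b$ by adding a polynomial perturbation small enough not to spoil the $\psi_j^\mu$ bound. The crucial observation enabling this is that the ratios $|I_{j+1}|/|I_j|$ and $|I_{j-1}|/|I_j|$ of adjacent Chebyshev subinterval lengths are bounded above and below by absolute constants (uniformly for $1\le j\le n-1$), so from $a\le x_{j+1}$ and $b\ge x_{j-1}$ one has $|a-x_j|\ge c|I_j|$ and $|b-x_j|\ge c|I_j|$. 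Consequently there is an absolute $\theta\in(0,1)$ with $\psi_j(a)\le\theta$ and $\psi_j(b)\le\theta$, whence
\[
|\widetilde T_j(a)|\le c(\mu)\theta^{\mu+2}, \qquad |1-\widetilde T_j(b)|\le c(\mu)\theta^{\mu+2}.
\]

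Next, I would construct two auxiliary polynomials $L_a,L_b$ of degree $\le c(\mu)n$ satisfying $L_a(a)=1$, $L_a(b)=0$, $L_b(a)=0$, $L_b(b)=1$, and $|L_a(x)|,|L_b(x)|\le c(\mu)\psi_j^\mu(x)\,\theta^{-\mu-2}$ on $I$. A natural recipe is to apply \cite{hky}*{Corollary 17} once more (or a closely related statement from the same source) to the step functions jumping at $a$ and $b$ respectively, and then use the factors $(b-x)/(b-a)$ and $(x-a)/(b-a)$ (which stay bounded since $b-a\ge c|I_j|$) to enforce the vanishing conditions at the opposite endpoints, accepting a loss of two powers of $\psi_j$ in the exponent (this is exactly why we started with $\mu+2$).

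Finally I would set
\[
T_j(x) := \widetilde T_j(x) - \widetilde T_j(a)\,L_a(x) + \bigl(1-\widetilde T_j(b)\bigr)L_b(x).
\]
The interpolation conditions $T_j(a)=0$ and $T_j(b)=1$ hold by construction, and
\[
|\chi_j(x)-T_j(x)| \le |\chi_j(x)-\widetilde T_j(x)| + |\widetilde T_j(a)|\,|L_a(x)| + |1-\widetilde T_j(b)|\,|L_b(x)| \le c(\mu)\psi_j^\mu(x),
\]
using the key bounds above together with $\psi_j^{\mu+2}(a)\theta^{-\mu-2}\le 1$.

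The main obstacle is verifying the bounds on the correction polynomials $L_a,L_b$; once one has the general-purpose polynomial approximants to characteristic functions from \cite{hky} at one's disposal, however, this becomes a routine adjustment. Given that \cite{hky}*{Corollary 17} is flexible enough to handle approximation to $\chi_j$-type functions with arbitrarily many powers of $\psi_j$ in the error, the whole argument reduces to a straightforward bookkeeping exercise once the loss of two powers (used to absorb the linear factor enforcing the extra interpolation constraints) is budgeted in from the start.
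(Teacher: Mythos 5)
The paper gives no construction here: it states that the lemma is an \emph{immediate} consequence of \cite{hky}*{Corollary 17}, which in that reference already furnishes, for the prescribed $a$ and $b$, a polynomial of degree $\le c(\mu)n$ interpolating $\chi_j$ at $a$ and $b$ with the $\psi_j^\mu$-error bound. Your blind proposal instead treats the cited result as giving only the error bound and then corrects the endpoint values, which is a genuinely different route; unfortunately, as written, it contains two errors that together break the argument.

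First, the assertion that $(b-x)/(b-a)$ and $(x-a)/(b-a)$ ``stay bounded since $b-a\ge c|I_j|$'' is false: $|b-x|$ can be as large as $2$ while $b-a$ can be as small as $c|I_j|\sim n^{-2}$, so these factors can be of size $n^2$. What is actually true, and what an argument of this type must exploit, is that the weighted quantity $\psi_j^2(x)\,|b-x|/(b-a)$ is bounded by an absolute constant on all of $I$ (this is exactly where the two extra powers of $\psi_j$ should be spent). Second, the stated bound $|L_a(x)|\le c(\mu)\psi_j^\mu(x)\,\theta^{-\mu-2}$ is incompatible with $L_a(a)=1$: evaluating at $x=a$ forces $c(\mu)\ge \theta^{\mu+2}/\psi_j^\mu(a)$, and since $\psi_j(a)$ can be as small as $\sim n^{-2}$ (e.g.\ $a=-1$ with $j$ near $n$), no constant $c(\mu)$ will do. The root cause is that $\psi_j(a)\le\theta$ gives $\psi_j^{\mu+2}(a)\theta^{-\mu-2}\le 1$, but you need the correction $|\widetilde T_j(a)|\,|L_a(x)|$ to be small, and the weakened bound $|\widetilde T_j(a)|\le c\theta^{\mu+2}$ throws away exactly the $a$-dependence that must cancel against $L_a$. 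The fix is to keep the sharp estimate $|\widetilde T_j(a)|\le c\,\psi_j^{\mu+2}(a)$, aim for the weaker requirement $|L_a(x)|\le c\,\psi_j^\mu(x)/\psi_j^{\mu+2}(a)$, and realize $L_a$ as (a two-sided polynomial comparant to $\psi_j^{\mu+2}$)$\cdot(b-x)/[(b-a)\cdot(\text{its value at }a)]$, verifying the bound via the boundedness of $\psi_j^2(x)|b-x|/(b-a)$ rather than of the linear factor alone. As presented, the bookkeeping does not close, so the proposal has a genuine gap.
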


\begin{corollary} \label{newcorol}
Let  $n,s,\mu\in\N$, $r\in\N_0$,
  $Y = \{y_j\}_{j=0}^{s-1} \subset I$, and let index $1\le j \le n-1$ be such that $(x_{j+1}, x_{j})$ does not contain any points from $Y$.
  Then,
   there exists a polynomial
$R_j(x):=R_{j,n}(x):= R_j(x; Y)$ of degree $\leq c(\mu,r,s)n$ such that
\be \label{hh11}
R_j(y_i) = \chi_j(y_i), \quad R_j^{(\nu)} (y_i) =0 ,\; 1\le \nu \le r , \quad 0\le i \le s-1,
\ee
and
\be \label{h22}
|\chi_j(x)- R_j (x)| \leq c(r,s,\mu) \psi_j^\mu(x), \quad x\in I.
\ee
\end{corollary}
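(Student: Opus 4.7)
The plan is to start from the polynomial produced by \lem{h15} and to ``Hermitize'' it, i.e., add a correction polynomial that enforces the interpolation conditions \ineq{hh11} without destroying the approximation bound.

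More precisely, first I would apply \lem{h15} with $a=-1$, $b=1$ and with the parameter $\mu$ there replaced by a sufficiently large $\mu_1 = \mu_1(\mu, r, s)$ (to be fixed later), obtaining a polynomial $T_j$ of degree $\le c(\mu_1)n$ with $T_j(-1)=0=\chi_j(-1)$, $T_j(1)=1=\chi_j(1)$ and $|T_j(x)-\chi_j(x)|\le c(\mu_1)\psi_j^{\mu_1}(x)$ on $I$. Let $Z = \{z_0,\dots,z_{\mu-1}\}$ denote the set of distinct points in $Y$, and let $L_\chi$ (respectively $L_{T_j}$) denote the Hermite interpolant of degree $\le(r+1)\mu-1$ at $Z$, every node taken with multiplicity $r+1$, with values $\bigl(\chi_j(z_i),0,\dots,0\bigr)$ (respectively $\bigl(T_j(z_i),T_j'(z_i),\dots,T_j^{(r)}(z_i)\bigr)$). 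I would then set
\[
R_j := T_j + (L_\chi - L_{T_j}).
\]
Substituting at each $z_i$ immediately shows that $R_j(z_i)=\chi_j(z_i)$ and $R_j^{(\nu)}(z_i)=0$ for $1\le\nu\le r$, which is equivalent to \ineq{hh11}; the degree bound $\deg R_j \le c(\mu_1,r,s)n$ is also immediate.

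For \ineq{h22}, write $\chi_j - R_j = (\chi_j-T_j) + (L_{T_j}-L_\chi)$. The first summand is already $O(\psi_j^{\mu_1})$. The second is a Hermite interpolant of degree $\le(r+1)\mu-1$ whose data are $\bigl(T_j(z_i)-\chi_j(z_i),\,T_j'(z_i),\dots,T_j^{(r)}(z_i)\bigr)$ at the nodes of $Z$. Since $(x_{j+1},x_j)\cap Y = \emptyset$, each $z_i$ has a neighborhood $J_i$ on which $\chi_j$ is constant, equal to some $c_i\in\{0,1\}$, and on which $|T_j - c_i|=|T_j-\chi_j|\le c\psi_j^{\mu_1}$. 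Applying the Dzyadyk--Bernstein inequality (a localized variant of \lem{88}) to the polynomial $T_j - c_i$ on $J_i$ bounds
\[
|T_j^{(\nu)}(z_i)|\le c\,\rho_n^{-\nu}(z_i)\,\psi_j^{\mu_1}(z_i), \quad 1\le\nu\le r,
\]
and $|T_j(z_i)-\chi_j(z_i)|\le c\psi_j^{\mu_1}(z_i)$ follows directly. Combined with standard estimates for the Hermite basis polynomials (or, equivalently, for Newton's confluent-divided-difference representation) and tracking the factors of $\rho_n^{-\nu}(z_i)$ and $|x-z_i|^\nu$ that arise, these coefficient bounds yield $|L_{T_j}(x)-L_\chi(x)|\le c\psi_j^{\mu}(x)$ once $\mu_1$ is taken sufficiently large (roughly $\mu_1\ge\mu+2r$).

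The main technical obstacle is to make this estimate on $L_{T_j}-L_\chi$ uniform in the geometry of $Y$: the Hermite basis polynomials over $Z$ can blow up when points of $Z$ coalesce, so a crude term-by-term bound is insufficient. This is resolved by exploiting that the coefficient data all arise from values and derivatives at $z_i$ of a single polynomial $T_j$ that is close to the piecewise constant $\chi_j$; consequently, the interpolant automatically inherits the cancellation needed to remain bounded as nodes coalesce. Making this cancellation explicit via a confluent-divided-difference representation, and carefully combining the local Dzyadyk--Bernstein estimates on each $J_i$, completes the proof.
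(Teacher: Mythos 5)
Your approach (start from the $T_j$ of \lem{h15} and add a low-degree Hermite correction $L_\chi - L_{T_j}$ to force the interpolation conditions) is a Boolean-sum strategy that differs from the paper's construction. The paper instead builds $R_j$ \emph{multiplicatively}: it first reduces to the case where $(x_{j+1},x_{j-1})$ is free of points of $Y$, then sets $Q_j(x;A,b_k):=\prod_i T_j^{r+1}(x;a_i,b_k)$ and $R_j:=1-\prod_k\bigl(1-Q_j(x;A,b_k)\bigr)^{r+1}$; raising to the power $r+1$ makes the first $r$ derivatives vanish at the required nodes automatically, so no correction step is needed.

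Unfortunately your version has a genuine gap, and the obstruction is exactly what the paper's multiplicative trick is designed to avoid. Consider a node $z_i$ adjacent to the gap, say $z_i=x_{j+1}$ (which the hypotheses permit). There $\psi_j(z_i)=\tfrac12$, so your claimed bound reads $|T_j^{(\nu)}(z_i)|\le c\,\rho_n^{-\nu}(z_i)\,2^{-\mu_1}$, and since $T_j$ must rise from $\approx 0$ to $\approx 1$ across $I_j$ with degree $\sim n$, its derivatives at $x_{j+1}$ may genuinely be of size $\rho_n^{-\nu}(z_i)\sim n^\nu$ (for $z_i$ away from $\pm1$); \lem{h15} gives no control beyond this. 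The correction $C:=L_\chi-L_{T_j}$ is a polynomial of degree at most $(r+1)s-1$, \emph{independent of} $n$, and it must satisfy $C^{(\nu)}(z_i)=-T_j^{(\nu)}(z_i)$. By Markov's inequality on the fixed-degree polynomial $C$, $\norm{C}{}\ge c\,|C^{(\nu)}(z_i)|\gtrsim n^\nu\to\infty$; hence $R_j=T_j+C$ is unbounded, which is incompatible with \ineq{h22}. Choosing $\mu_1$ large does not help, since $\psi_j^{\mu_1}(z_i)$ at a node adjacent to the gap is bounded below by a constant. The alleged ``cancellation as nodes coalesce'' does not address this, because the failure already occurs at a single node. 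To salvage an additive approach one would need a strengthening of \lem{h15} that additionally forces $T_j^{(\nu)}$ to be small at the relevant nodes, which is essentially equivalent to building the $(r+1)$th-power structure into $T_j$ in the first place --- i.e., to the paper's construction.
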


% We note that points in $Y$ do not have to be distinct.

\begin{proof} First, we may assume that the interval $(x_{j+1}, x_{j-1})$ does not contain any points from $Y$.
To see this, it is sufficient to replace $n$ by $2n$ and, denoting $\tj := 2j+1$, notice that $(x_{j+1}, x_j) = (x_{\tj+1, 2n}, x_{\tj-1, 2n})$,
  $|\chi_j(x)-\chi_{\tj, 2n}(x)| \le c \psi_j^\mu(x)$ and $\psi_{\tj, 2n} (x)\sim \psi_{j}(x)$, $1\le j\le n-1$, and so if $R_{\tj, 2n}$ satisfies \ineq{hh11} and \ineq{h22} with $j$ and $n$ replaced by $\tj$ and $2n$, respectively,   then it also satisfies these inequalities as stated.

Also, it is clear that, without loss of generality, we may assume that $\pm 1 \in Y$.
Now, for convenience, we denote  $A = \{a_1, \dots, a_\ell\} : = Y \cap [-1, x_{j+1}]$ and $B = \{b_1, \dots, b_m\} : = Y \cap [x_{j-1}, 1]$, where $\ell, m \ge 1$ and $\ell + m = s$.
Let
\[
Q_j(x; A, b_k) := \prod_{i=1}^\ell  T_j^{r+1}(x; a_i, b_k) , \quad 1\le k \le m ,
\]
where $T_j$ are polynomials from \lem{h15}.
It is clear that $Q_j(b_k; A, b_k) = 1$, $Q_j^{(\nu)}(a_i; A, b_k) = 0$, for $0\le i\le \ell$ and $0\le \nu\le r$, and $Q_j$ approximates $\chi_j$ with the right order (\ie \ineq{h22} holds with $Q_j$ instead of $R_j$).
Now, let
\[
R_j(x; Y) :=   1-\prod_{k=1}^m \left( 1- Q_j(x; A, b_k) \right)^{r+1} .
\]
Then $R_j(a_i; Y) = 0$, $1\le i \le \ell$, $R_j(b_i; Y) = 1$, $1\le i \le m$, and $R_j^{(\nu)}(y; Y) =0$, for all $y\in Y$ and $1\le \nu\le r$. Also, it is not difficult to check that \ineq{h22} holds.
\end{proof}

\begin{remark}
We note that \cor{newcorol} can also be proved using the method from \cite{GS}.
\end{remark}

We are now ready to prove \thm{another}. It is   clear that, by increasing $k$ if necessary, we may assume that $k+r \ge s$, and by replacing $f$ with $g = f-L_{s-1}(\cdot; f, Y)$ ($L_{s-1} \in \Poly_{s-1}$ is the polynomial satisfying \ineq{hermite})  and noting that $\w_k(g^{(r)}, t) = \w_k(f^{(r)}, t)$, we may assume that
\[
f^{(l_j-1)}(y_j)=0 , \text{for all }\;   0\le j \le s-1.
\]

%Suppose now that $n\in\N$ is so large that, among any $2r+3$ adjacent intervals $I_j$, there is at least one interval that does not contain any points from $Y$ ($n\ge 50 (r+1)/\lambda$ will do).
%
Suppose now that $n\in\N$ is so large that
any set consisting of $2r+3$ adjacent intervals $I_i$ contains at most $r+1$ points from $Y$. (This is achieved if the total length of these intervals is less than $\lambda$, and so $n\ge 50 (r+1)/\lambda$ will do). Since each point $y_j$ from $Y$ can belong to at most two intervals $I_i$, this also means that this set contains at least one interval $I_i$ that does not contain any points from $Y$.

For each $0\le j \le s-1$, let $O_j$ be the smallest interval $J = [x_{i_2}, x_{i_1}]$ such that $y_j \in J$ and (i) either $x_{i_2}=-1$ or  $[x_{i_2 +1}, x_{i_2}]$  does not contain any points from $Y$,   and
(ii) either $x_{i_1}=1$ or  $[x_{i_1}, x_{i_1-1}]$   does not contain any points from $Y$.
 Some observations are in order: (i) each interval $O_j$ consists of not  more than $2r+2$ intervals $I_i$ and contains at most $r+1$ points from $Y$, (ii) a point  from $Y$ cannot be an endpoint  of $O_j$ unless it is the endpoint of $I$,
 (iii) it is possible for $Q_i$ and $Q_j$ to be the same even when $i\ne j$ (and, in fact, even in the case $y_i \ne y_j$).

 We now denote $O := \cup_{j=0}^{s-1} O_j$ and  construct a piecewise polynomial function $S$ as follows. On the intervals $I_i$ that are not contained in $O$, we let $S$ to be any polynomial of degree $\le k+r-1$ satisfying
\be \label{local}
\norm{f(x)-S|_{I_i}}{I_i}  \le c (k,r) |I_i|^r \w_k(f^{(r)}, |I_i|, I_i) .
\ee
Such a polynomial is guaranteed by Whitney's theorem.

For all $i$ such that $I_i \subset O_j$, for some  $0\le j \le s-1$,  we let $S$ be the polynomial $L_{k+r-1}(\cdot; f, \widetilde Y)$ from \lem{umzhlem} where  $[a,b] := O_j$, and $\widetilde Y$ consists of all points in $Y \cap O_j$ supplemented by however many extra points in $O_j$ are needed so that the cardinality of $\widetilde Y$ becomes $k+r$ (while keeping $\lambda$ from the statement of \lem{umzhlem} bounded below by a positive constant that depends only on $k$ and $r$). Then $S|_{O_j}  \in\Poly_{k+r-1}$ satisfies \ineq{local} with $I_i$ replaced by $O_j$,
and
$(S|_{O_j})^{(l_i-1)}(y_i)=0$, for all $y_i \in O_j$.

We note that $S$ does not have to be continuous, and so our definition is inaccurate at the endpoints of $I_i$'s since, potentially, $S|_{I_{i+1}}(x_i)$ is not the same as $S|_{I_{i}}(x_i)$. Even though it has no influence on our estimates, to be precise, we now redefine $S$ at all points $x_i$ so that it becomes right continuous everywhere in $I$.

For convenience, denote $p_i := S|_{I_i}$ and note that
\[
S(x) = p_n(x) + \sum_{i=1}^{n-1} \left( p_i(x)-p_{i+1}(x) \right) \chi_i(x) = p_n(x) + \sum_{i\in \Lambda}  \left( p_i(x)-p_{i+1}(x) \right) \chi_i(x) ,
\]
where $\Lambda := \left\{ i \suchthat 1\le i \le n-1, \; \text{and }\; x_i \not\in O^\circ \right\}$.
Finally, we let
\[
P_n(x):= p_n(x) + \sum_{i\in\Lambda}  \left( p_i(x)-p_{i+1}(x) \right) R_i(x; Y) .
\]
Now, using a rather standard approach (see,  \eg  \cites{GS, hky}), one can show that \ineq{an2} holds,
and it only remains to verify that
\be  \label{trrr}
P_{n}^{(l_j-1)}(y_j) = 0, \quad \text{for all }\; 0\le j \le s-1 .
\ee
Indeed, for any $0\le \nu \le r$, we have
\begin{align*}
P_n^{(\nu)}(y_j) &= p_n^{(\nu)}(y_j) + \sum_{i\in\Lambda} \sum_{\ell=0}^\nu \binom{\nu}{\ell}  \left( p_i^{(\nu-\ell)}(y_j)-p_{i+1}^{(\nu-\ell)}(y_j) \right) R_i^{(\ell)}(y_j; Y)   \\
&= p_n^{(\nu)}(y_j) + \sum_{i\in\Lambda}  \left( p_i^{(\nu)}(y_j)-p_{i+1}^{(\nu)}(y_j) \right) \chi_i(y_j) % \\
%&= p_n^{(\nu)}(y_j) + \sum_{i=1}^{n-1} \left( p_i^{(\nu)}(y_j)-p_{i+1}^{(\nu)}(y_j) \right) \chi_i(y_j)
= S^{(\nu)}(y_j) ,
\end{align*}
and so \ineq{trrr} follows.

%%%%%%%%%%%%%%%%%%%%%%%%%%%%%%%%%%%%%%%%%%%%%%%%%%%%%%%%%%%%%%%

\sect{Negative theorems} \label{sec5}

\subsection{Strong negative theorem: proof of \thm{negative}}

Without loss of generality, we can assume that $x_0$ in the statement of \thm{negative} is $0$. Moreover, as is well known (see,  \eg  \cite{DS}*{Chapter 4}),   any function can be extended from $[0,1]$ to $[-1,1]$ without essentially changing its modulus of smoothness or modulus of smoothness of its derivative. Hence, it is sufficient to prove %\thm{negative} $I$ replaced by $[0,1]$ and $x_0=0$.
 the following lemma.

\begin{lemma}\label{negative1}
Let $k\in\N$,  $r\in\N_0$,   and let a positive function $\vare \in C(0,1]$ be such that $\lim_{x\to 0^+} \vare(x)=0$. Then,
there is a function $F\in C^{r}[0,1] $, such that for any algebraic polynomial $P$ we have
\be\label{01}
\limsup_{x\to 0^+}\frac{|F(x)-P(x)|}{\omega_k\left(F^{(r)},\vare(x) x^{(r+1)/k} ; [0,1]\right) }=\infty.
\ee
\end{lemma}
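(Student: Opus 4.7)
The plan is to prove Lemma \ref{negative1} by an explicit construction, building $F$ as a lacunary sum of rescaled smooth bumps placed at a sequence of points $t_j \searrow 0$ with alternating signs. Both $t_j$ and the bump amplitudes will be chosen inductively in tandem with the given $\varepsilon$, so that for every polynomial $P$ the ratio in \ineq{01} blows up along a suitable subsequence $\{x_j\} \to 0^+$. Concretely, fix a cutoff $\Phi \in C_c^\infty((0, 1))$ with $\Phi(1/2) = 1$, and choose $\{t_j\}_{j \ge 1}$ and $\{\beta_j\}_{j \ge 1}$ inductively: given $t_1, \dots, t_j$ and $\beta_1, \dots, \beta_j$, pick $t_{j+1}$ so small that $t_{j+1} \le \min\{t_j/2,\, \varepsilon(t_j)\, t_j^{(r+1)/k}\}$ (possible since $\varepsilon(t_j) > 0$), and pick $\beta_{j+1}$ so small that $\beta_{j+1} \le 2^{-j} \beta_j t_j^{r+k+1}$. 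Set $\delta_j := t_j/4$ and
\[
F(x) := \sum_{j=1}^{\infty} (-1)^j \beta_j \delta_j^r\, \Phi\!\left(\frac{x - t_j}{\delta_j}\right), \qquad x \in [0,1].
\]
Since the supports $[t_j, t_j + \delta_j]$ are pairwise disjoint and $\beta_j \to 0$ rapidly, $F \in C^r[0,1]$ with $F^{(m)}(0) = 0$ for all $0 \le m \le r$.

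The analysis is carried out along the test sequence $x_j := t_j + \delta_j/2$, at which $F(x_j) = (-1)^j \beta_j \delta_j^r$. For the \emph{denominator}, let $u_j := \varepsilon(x_j) x_j^{(r+1)/k}$; disjointness of supports together with the local bound $\omega_k(\Phi^{(r)}((\cdot - t_i)/\delta_i), u; [0, 1]) \le c\min\{1, (u/\delta_i)^k\}$ yields
\[
\omega_k(F^{(r)}, u_j; [0,1]) \le c \sup_{i \ge 1} \beta_i \min\!\left\{1, (u_j/\delta_i)^k\right\},
\]
and the adaptive condition $u_j \ge t_{j+1} \ge 4\delta_{j+1}$ confines the dominant contribution to indices $i \ge j+1$, giving $\omega_k(F^{(r)}, u_j; [0,1]) \le C \beta_{j+1}$. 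For the \emph{numerator}, fix any polynomial $P$ and let $m := \min\{i \ge 0 : P^{(i)}(0) \ne 0\}$ (with $m = \infty$ if $P \equiv 0$). If $m < r$, then $|P(x_j)| \gtrsim t_j^m \gg |F(x_j)|$, giving $|F(x_j) - P(x_j)| \gtrsim t_j^m \gtrsim t_j^r$; if $m = r$, the leading Taylor term of $P$ again dominates $|F(x_j)|$, giving $|F(x_j) - P(x_j)| \gtrsim t_j^r$; if $m > r$, then $P(x_j)$ has fixed sign for all large $j$ whereas $F(x_j)$ alternates, so on the subsequence of $j$ where the signs of $F(x_j)$ and $-P(x_j)$ agree one has $|F(x_j) - P(x_j)| \ge |F(x_j)| \gtrsim \beta_j t_j^r$. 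Combining, along the relevant subsequence
\[
\frac{|F(x_j) - P(x_j)|}{\omega_k(F^{(r)}, u_j; [0, 1])} \gtrsim \frac{\beta_j t_j^r}{C \beta_{j+1}} \ge c\, 2^j t_j^{-(k+1)} \longrightarrow \infty,
\]
which yields \ineq{01}.

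The main obstacle is the denominator estimate. Because $\varepsilon$ may decay arbitrarily slowly, the scale $u_j$ need not be small compared to $\delta_j$, and in principle many bumps at different scales contribute to $\omega_k(F^{(r)}, u_j)$; it is precisely the adaptive coupling $t_{j+1} \le \varepsilon(t_j) t_j^{(r+1)/k}$ together with the super-geometric decay of $\{\beta_j\}$ that confines the dominant contribution to indices $\ge j+1$. A secondary subtlety arises in the numerator when $m > r$: one must exploit the sign oscillation of $F(x_j)$ to prevent cancellation against $P(x_j)$, and since this works only on half of the indices the conclusion is necessarily phrased as a $\limsup$ along a subsequence.
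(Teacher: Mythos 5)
Your construction is genuinely different from the paper's (which uses $x^{r+1}\cos(2\pi\ln x)$ for $k=1$, a carefully designed extremal modulus $\omega\in\Phi^k$ tuned to $\vare$ for $k\ge\max\{2,r+1\}$, and a Marchaud reduction for $2\le k\le r$), and it does work for $k=1$. But for $k\ge 2$ the denominator estimate is wrong, and this is a fatal gap rather than a technicality. The claim $\omega_k(F^{(r)},u_j;[0,1])\le C\beta_{j+1}$ ignores the contribution of the $j$-th bump itself. Since $F^{(r)}$ restricted to the $j$-th bump has sup-norm $\sim\beta_j$ and support width $\delta_j$, one has the \emph{lower} bound $\omega_k(F^{(r)},u_j;[0,1])\ge c\,\beta_j\min\{1,(u_j/\delta_j)^k\}$, and this is unaffected by how small you make $\beta_{j+1}$ or $t_{j+1}$. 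With $\delta_j=t_j/4$ and $u_j\sim\vare(t_j)t_j^{(r+1)/k}$ one gets, already for $P\equiv 0$, that the ratio at the test point $x_j$ is
\[
\frac{|F(x_j)|}{\omega_k(F^{(r)},u_j;[0,1])}
\;\lesssim\;\frac{\beta_j\delta_j^r}{\beta_j\min\{1,(u_j/\delta_j)^k\}}
\;\sim\;\min\Big\{t_j^{\,r},\;\frac{t_j^{\,k-1}}{\vare(t_j)^k}\Big\}.
\]
For $k\ge 2$ this tends to $0$ whenever $\vare$ decays more slowly than $t_j^{(k-1)/k}$ does; e.g.\ for $\vare(x)=1/\log(e/x)$ and any admissible $t_j$ (which must satisfy $t_{j+1}\le t_j/2$, hence decays at least geometrically), $t_j^{\,k-1}/\vare(t_j)^k\to 0$. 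So along your test sequence the ratio goes to $0$, not $\infty$, already against the zero polynomial.

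The obstruction is structural: any single $C^\infty$ bump with $F^{(r)}$-amplitude $\beta$ and support width $\delta$ gives $|F|\sim\beta\delta^r$ at the peak while $\omega_k(F^{(r)},t)\gtrsim\beta\min\{1,(t/\delta)^k\}$. When the scale $u_j=\vare(x_j)x_j^{(r+1)/k}$ is of order $\delta_j$ or larger --- which happens exactly when $\vare$ decays slowly and $k\ge 2$ --- the ratio is $\lesssim\delta_j^r\to 0$; and when $u_j<\delta_j$, one still gets the bound $t_j^{\,k-1}/\vare(t_j)^k$ which fails for slowly decaying $\vare$. The paper avoids this by \emph{prescribing} the modulus: it constructs a piecewise-defined $\omega\in\Phi^k$ (with plateaus that depend adaptively on $\vare$), takes $f$ with $\omega_k(f,t)\sim\omega(t)$ via \cite{DS}*{Theorem 3.4.2}, and sets $F$ to be the $r$-fold primitive of $f$; then the two crucial facts $F(x_{2j+1})/x_{2j+1}^{r+1}\to\infty$ and $\omega(x_{2j+1}^{(r+1)/k}\vare(x_{2j+1}))=\vare(x_{2j+1})x_{2j+1}^{r+1}$ hold \emph{simultaneously}, something the bump construction cannot achieve. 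A secondary (fixable) issue: you write $t_{j+1}\le\vare(t_j)t_j^{(r+1)/k}$ but later need $u_j=\vare(x_j)x_j^{(r+1)/k}\ge t_{j+1}$ with $x_j\ne t_j$; since $\vare$ need not be monotone this requires a preliminary replacement of $\vare$ by, say, $\sup_{0<y\le x}\vare(y)$.
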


\lem{negative1} will be proved in three cases: (i) $k=1$, (ii) $k \ge \max\{2, r+1\}$, and (iii) $2\le k \le r$.

\mbox{}\\
{\bf Case (i): $k=1$.} \\
Let    $F(x) := x^{r+1} \cos(2\pi \ln x)$. Then, $F\in C^r[0,1]$,  $\norm{F^{(r+1)}}{L_\infty[0,1]} \le c(r)$, and so $\w_1(F^{(r)}, t; [0,1]) \le c(r) t$.
If $P$ is an arbitrary polynomial from $\Pn$, then it
  is either identically zero or has at most $n$ zeros, and so there exists $\delta>0$ such that
$P$ is either nonnegative or nonpositive on $[0,\delta]$.
% is a small neighborhood of $0$, where $P(x)$ is either nonnegative or nonpositive.
Without loss of generality, suppose that $P(x) \le 0$, $0\le x \le  \delta$.
Then
\[
|F(x_m)-P(x_m)| \ge  F(x_m) = x_m^{r+1} , \quad \mbox{\rm for all }\;  x_m :=  e^{-m},
\]
  where $m\in\N$, $m\ge |\ln \delta|$. Hence,
\[
\frac{|F(x_m)-P(x_m)|}{\omega_1\left(F^{(r)},\vare(x_m) x_m^{r+1} ; [0,1]\right)} \ge \frac{c(r)}{\vare(x_m)} \to \infty , \quad m\to\infty.
\]

\mbox{}\\
{\bf Case (ii): $k \ge \max\{2, r+1\}$.} \\
By replacing  $\vare$ by $\hat\vare(x):=\max \{\vare(x),2 x^{1/k}\}$ if necessary, we may assume that
\be \label{1}
\vare(x)\ge2x^{1/k}\ge 2x .
\ee

We define the sequence $\{x_j\}_{j=0}^\infty$ of points in $(0,1]$ as follows. Starting with $x_0:=1$,
if a point $x_{2j}$ has been defined, then we pick $x_{2j+1}$ to be the smallest number such that
$\vare(x_{2j+1})=x_{2j}$, and let
\[
\wx_{2j+1} := x_{2j+1}\vare(x_{2j+1}) = x_{2j} x_{2j+1} \andd x_{2j+2}:=x_{2j}^{1-k}\wx_{2j+1}^k = x_{2j} x_{2j+1}^k .
\]
  Note that $2x_{2j+2} \le  \wx_{2j+1} \le  x_{2j+1}\le x_{2j}/2$ and
define
\[ %\label{4}
\omega(x):=\begin{cases} x_{2j+2},\quad&\text{if}\quad x_{2j+2}\le x < \wx_{2j+1},\\
x_{2j}^{1-k}x^k,\quad&\text{if}\quad \wx_{2j+1}\le x\le x_{2j}.
 \end{cases}
\]
Clearly,  % $\omega \in C(0,1]$,
$0<\omega(x)\le x$, $0<x\le 1$, and $\omega\in\Phi^k$.
Therefore, setting
\[
f(x):=\frac 1{(k-2)!}\int_x^1x(u-x)^{k-2}\frac{\omega(u)}{u^k}\, du, \quad x\in[0,1],
\]
it follows from \cite{DS}*{Theorem 3.4.2} that
\be\label{22ss}
\omega(t)\le\w_k(F^{(r)},t; [0,1]) = \w_k(f,t; [0,1])\le k\omega(t),\quad t\in[0,1],
\ee
where
\[ %\label{02}
F(x):=\begin{cases}
  \frac1{(r-1)!}\int_0^x(x-t)^{r-1}f(t)\,dt , &   \quad \text{if $r\ge1$,}\\
f(x),& \quad   \text{if $r=0$.}
\end{cases}
\]
It is not difficult to show (see also \cite{DS}*{Lemma 3.4.2(i)}) that $\lim_{x\to 0^+} f(x) = f(0)=0$.
Hence, $f\in C[0,1]$ and so, in particular, for $r\ge 1$,
\begin{align} \label{11}
\frac{F(x)}{x^{r}} & \le \big(\max_{t\in[0,x]}f(t)\big) \frac1{(r-1)!x^r}\int_0^x(x-t)^{r-1}\,dt \nonumber \\
& =\frac1{r!}\max_{t\in[0,x]}f(t)\to0,\quad x\to0.
\end{align}
Also, since  for $j\in \N$,
\[ %\label{5}
\int_{x_{2j}}^{2x_{2j}} (u-x_{2j})^{k-2}  \frac{\omega(u)}{u^k} \,du={x_{2j}}\int_{x_{2j}}^{2x_{2j}}\frac{(u-x_{2j})^{k-2}}{u^k}\,du
=\int_1^2\frac{(u-1)^{k-2}}{u^k}\, du=:c_k,
\]
we have
\begin{align*}
\frac{f(x)}{x} & \ge \frac{1}{(k-2)!} \sum_{j\in\N, x_{2j} \ge x} \int_{x_{2j}}^{2x_{2j}} (u-x)^{k-2}  \frac{\omega(u)}{u^k} \,du \\
& \ge \frac{c_k}{(k-2)!}\sum_{j\in\N, x_{2j} \ge x} 1 \to \infty, \quad x\to 0 .
\end{align*}
 Hence, since $f(x)/x \downarrow$ on $(0,1]$, we have, for $r\ge 1$,
\begin{align}\label{12}
\frac{F(x)}{x^{r+1}}&=\frac1{(r-1)!x^{r+1}}\int_0^x(x-t)^{r-1}t\frac{f(t)}t\,dt
  \ge\frac{f(x)}{(r-1)!x^{r+2}}\int_0^x(x-t)^{r-1}t\,dt \nonumber \\
&=\frac1{(r+1)!}\frac{f(x)}x\to\infty,\quad x\to0.
\end{align}
Also, since by \ineq{1},  $\wx_{2j+1} \le x_{2j+1}^{(r+1)/k} \le x_{2j}$, we have
\begin{align}\label{41}
\omega\left(x_{2j+1}^{(r+1)/k}\right) &=  x_{2j}^{1-k} x_{2j+1}^{r+1} \le x_{2j+1}^{(1-k)/k} x_{2j+1}^{r+1} = x_{2j+1}^{r+1/k}
 =o(x_{2j+1}^{r}),\quad j\to\infty.
\end{align}
Now, we fix a polynomial $P$. If
\[
\limsup_{x\to 0}\frac{|F(x)-P(x)|}{\omega(x^{(r+1)/k})}=\infty,
\]
then \ineq{22ss} implies \ineq{01}.
Otherwise, there is a constant $C$, such that
\[ %\label{7}
|F(x)-P(x)|\le C\omega(x^{(r+1)/k}),\quad x\in[0,1].
\]
Since, by \ineq{41}  and \ineq{11}, $\omega(x_{2j+1}^{(r+1)/k})=o(x_{2j+1}^r)$ and $F(x_{2j+1})=o(x_{2j+1}^r)$  as $j\to\infty$, we conclude that $P(x_{2j+1})=o(x_{2j+1}^r)$  as well.
This means, that $P(x)=x^{r+1}Q(x)$, where $Q$ is also an algebraic polynomial.
By \ineq{12},
there is   $\delta >0$, such that $\norm{Q}{[0,\delta]} <\frac12 x^{-r-1}F(x)$, $x\in (0,\delta]$, which, in turn, implies
\[
|P(x)| \le \frac12 F(x),\quad x\in[0,\delta].
\]
Denoting, for convenience, $y:=x_{2j+1}$,  it follows from \ineq{12} that, for sufficiently large $j$,
\[
 F(y)-P(y)   \ge \frac 12  F(y) \ge y^{r+1}.
\]
On the other hand, since
$\wx_{2j+1} = y\vare(y) \le y^{(r+1)/k}\vare(y)\le \vare(y)=x_{2j}$, we have $\omega(y^{(r+1)/k}\vare(y))=x_{2j}^{1-k}y^{r+1}\vare^k(y) = \vare(y) y^{r+1}$, and therefore
\[
\frac{F(y)-P({y})}{\omega(y^{(r+1)/k}\vare(y))}\ge\frac {1}{\vare(y)},
\]
which together with \ineq{22ss} implies \ineq{01}.

\mbox{}\\
{\bf Case (iii): $2\le k\le r$.} \\
We will show that, in this case, the statement of the lemma follows from the case $k=r+1$ that was proved above and the Marchaud inequality.
Let
\[
\we (x):=\vare^{k/(r+1)}(x) .
\]
Then, it follows from Case~(ii) that there exists a function $F\in C^r[0,1]$ such that
\be\label{87}
\limsup_{x\to 0}\frac{|F(x)-P(x)|}{\w_{r+1}\left(F^{(r)},x\we(x); [0,1]\right)}=\infty ,
\ee
for every polynomial $P$.
It follows from the Marchaud inequality (recall that it can be found in  \cite{DL}*{Theorem 2.8.1}, for example) that, for  $k\le r$ and $f:=F^{(r)}$, we have, for sufficiently small $x>0$,
\begin{align*}
\omega_k\left(f,  \vare(x)x^{(r+1)/k} ; [0,1]\right)
 \le \; & c\vare^k(x)x^{r+1}\int_{\vare(x)x^{(r+1)/k}}^1\frac{\omega_{r+1}(f, u; [0,1])}{u^{k+1}}\, du \\
 & +c\vare^k(x)x^{r+1} \norm{f}{[0,1]} .
\end{align*}
Denoting $\W(x):= \w_{r+1}\left(f,x\we(x); [0,1]\right)$, we have
\begin{align*}
  \int_{\vare(x)x^{(r+1)/k}}^1\frac{\omega_{r+1}(f,u; [0,1])}{u^{k+1}}\, du
& =  \left( \int_{\vare(x)x^{(r+1)/k}}^{x\we(x)} + \int_{x\we(x)}^{1} \right) \frac{\omega_{r+1}(f,u; [0,1])}{u^{k+1}}\, du   \\
&\le
\left(  \frac{1}{k \vare^k(x) }
+ \frac{c}{ \we^{r+1}(x)} \right)  \frac{ \W(x)}{x^{r+1}}  \le c \frac{ \W(x)}{\vare^k(x) x^{r+1}} ,
\end{align*}
and so
\begin{align*}
\w_k\left(f,\vare(x)x^{(r+1)/k}; [0,1]\right) & \le c \W(x) +c \we^{r+1}(x)x^{r+1}\|f\| \\
& \le c(1+ \W^{-1}(1)\norm{f}{[0,1]}) \W(x),
\end{align*}
which, together with \ineq{87}, yields \ineq{01}.

\lem{negative1} is now proved in all cases.

\begin{remark}
Note that the proof in Case~(iii) works for $k=1$ and $r\ge 1$ as well. Hence, except for $(k,r)=(1,0)$, Case~(i) is covered by Case~(iii).
\end{remark}

\subsection{Weak negative theorem} \label{secweak}

The following negative theorem  has a much simpler proof than \thm{negative}, but it is not as powerful.

\begin{theorem}[weak negative theorem]\label{weaknegative}
Let $k\in\N$,  $r\in\N_0$, $x_0\in I$, and let a positive function $\vare \in C(0,1]$ be such that $\lim_{x\to 0^+} \vare(x)=0$.
Then, for any $n\in\N$, $M>0$ and $\delta>0$, there exists $F=F_{n, M,\delta} \in C^r$ such that, for any $P_n\in\Pn$, we have
\be \label{weneg}
\sup_{x\in [x_0-\delta, x_0+\delta]\cap I} \frac{|F(x)-P_n(x)|}{\omega_k\left(F^{(r)},\vare(|x-x_0|) |x-x_0|^{(r+1)/k}\right)} \ge M.
\ee
\end{theorem}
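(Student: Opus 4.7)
The plan is to apply a direct divided-difference argument, which is simpler than the intricate construction behind \thm{negative}. After translating, assume $x_0=0$, and (after reflection if necessary) that $[0,\delta]\subset I$. Fix a small parameter $h\in(0,\delta/(n+2)]$, to be chosen in terms of $n,M,\vare$, and place $n+2$ interpolation nodes $\xi_i:=(i+1)h$ for $i=0,\ldots,n+1$. Take the simple test function $F(x):=x^{n+1}\in C^\infty\subset C^r$, which satisfies $[\xi_0,\ldots,\xi_{n+1};F]=1$ (the leading coefficient of a polynomial of degree exactly $n+1$).

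Since the $(n+1)$-st divided difference annihilates $\Pn$, for every $P_n\in\Pn$ we have the identity
\[
1=[\xi_0,\ldots,\xi_{n+1};F-P_n]=\sum_{i=0}^{n+1}\frac{F(\xi_i)-P_n(\xi_i)}{\prod_{j\neq i}(\xi_i-\xi_j)}.
\]
Using $\prod_{j\neq i}|\xi_i-\xi_j|=h^{n+1}i!(n+1-i)!$, I would then suppose for contradiction that \ineq{weneg} fails, so that $|F(\xi_i)-P_n(\xi_i)|<M\,\omega_k(F^{(r)},\vare(\xi_i)\xi_i^{(r+1)/k})$ at each node. In the degenerate subcase $n+1-r<k$, $F^{(r)}$ is a polynomial of degree $<k$ (or identically $0$), so $\omega_k(F^{(r)},\cdot)\equiv 0$; since $F\notin\Pn$ the ratio is $+\infty$ somewhere in $[-\delta,\delta]\cap I$, giving the required bound at once. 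In the nontrivial subcase $n\ge r+k-1$, $F^{(r)}$ is a polynomial of degree $\ge k$ with $\omega_k(F^{(r)},t)\le c(n,r,k)t^k$, and plugging this in yields
\[
1\le c(n,r,k)Mh^{r-n}\sum_{i=0}^{n+1}\frac{\vare^k((i+1)h)(i+1)^{r+1}}{i!(n+1-i)!}.
\]
Since $\vare(t)\to 0$ as $t\to 0^+$, the right-hand sum tends to $0$ as $h\to 0$, and the plan is to choose $h$ small enough that this inequality is violated.

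\textbf{Main obstacle.} The factor $h^{r-n}$ diverges to $+\infty$ when $n>r$, and for $\vare$ tending to $0$ arbitrarily slowly (say $\vare(t)=1/\log(1/t)$) the sum does not decay fast enough to compensate. The resolution exploits the freedom granted by the statement—$F$ may depend on $n,M,\delta$, and hence implicitly on $\vare$—either by placing the nodes $\xi_i$ non-uniformly (for example geometrically) so that the weights $1/\prod_{j\neq i}|\xi_i-\xi_j|$ concentrate on those $i$ for which $\vare(\xi_i)$ is smallest, or by replacing $F$ with $x^{n+1}$ composed with a $\vare$-dependent rescaling that balances $h^{n-r}$ against the decay of $\vare^k$. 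In the simpler range $n\le r$ the factor $h^{r-n}\le 1$ is harmless and the argument concludes directly by driving $h\to 0$; only the range $n>r$ demands this extra calibration, and this is the technical heart of the proof.
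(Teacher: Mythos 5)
Your proposal has a genuine gap, and you identify it yourself: the divided-difference argument with $F(x)=x^{n+1}$ and equispaced nodes fails precisely when $n>r$, which is the case of real interest (for $n\le r$ the conclusion is essentially trivial since $\Pn$ cannot match $r+1$ Taylor coefficients of a non-polynomial). The obstruction is not a technicality to be calibrated away. For equispaced nodes, the weight $1/\prod_{j\ne i}|\xi_i-\xi_j|\sim h^{-(n+1)}$ while the budget at each node is $\sim\vare^k((i+1)h)\,h^{r+1}$, giving the factor $h^{r-n}$, and the two fixes you sketch do not repair this. Rescaling either $F$ or the variable affects $[\xi_0,\dots,\xi_{n+1};F]$ and $\omega_k(F^{(r)},\cdot)$ identically (both are $(n+1)$-homogeneous under $F\mapsto\lambda^{n+1}F$ and, under $x\mapsto\lambda x$, scale by $\lambda^{-(n+1)}$ on one side and $\lambda^{n+1}$ on the other, cancelling), so the scaling balance is unchanged. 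Geometric node placement does not help either: the term coming from the outermost node $\xi_{n+1}$ is $\approx\vare^k(\xi_{n+1})\xi_{n+1}^{r-n}$ after the same computation, which still blows up for slowly vanishing $\vare$. More fundamentally, for \emph{any} $F\in C^{r+1}$ the $(n+1)$-st divided difference over nodes in $[0,(n+2)h]$ is $O(h^{r-n}\|F^{(r+1)}\|)$, so the unfavourable scaling is built into this framework and cannot be removed by node placement or rescaling of a monomial test function.

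The paper's proof avoids divided differences entirely and picks $F(x)=(\eps-x)_+^{k+r}$ for a small parameter $\eps$. The essential feature is that $F$ is not merely small but \emph{identically zero} for $x\ge\eps$, while having $F^{(r+1)}(0)\sim\eps^{k-1}$, so the relevant derivative at the origin is large relative to $\|F\|\sim\eps^{k+r}$. Supposing \ineq{weneg} fails then forces $|F(x)-P_n(x)|=o(x^{r+1})$ near $0$, hence $P_n^{(r+1)}(0)=F^{(r+1)}(0)\sim\eps^{k-1}$, while at the same time $\|P_n\|_{[0,\delta]}\le c\eps^k$, and Markov's inequality gives $|P_n^{(r+1)}(0)|\le c(n,\delta)\eps^k$. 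Sending $\eps\to0$ yields the contradiction. Your $F=x^{n+1}$ cannot produce this tension because it grows away from $0$ at exactly the rate $\Pn$ cannot match, which is why your divided difference is $1$ but your budget cannot be made to beat it. If you want to keep the divided-difference framework you would have to use a compactly supported (or kinked) $F$ like the paper's and place most of the nodes where $F$ vanishes; as stated, your argument is incomplete for $n>r$.
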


The main shortcoming of this theorem is that it does not exclude a possibility that the left-hand side of \ineq{weneg} is uniformly bounded above for all functions $F\in C^r$ if the degree of approximating polynomials is allowed to be sufficiently large depending on $F$ (\ie if $n\ge N(f)$, for some natural number $N$ that is allowed to depend on $f$).
Of course, \thm{negative} shows that this is impossible.

\begin{proof}
The idea of the proof  is essentially the same  as in \cites{Yu} (see also \cite{K-sim}*{Theorem 3}).

As in the proof of \thm{negative}, without loss of generality, one can assume that $x_0=0$ and $I$ is replaced by $[0,1]$.
Thus, we will show
that, for any $n\in\N$, $M>0$ and $\delta>0$, there exists sufficiently small $\eps >0$ such that, for $F(x) := (\eps - x)_+^{k+r}$ and any
any $P_n\in\Pn$, we have
\be \label{weakin}
\sup_{x\in [0, \delta]} \frac{|F(x)-P_n(x)|}{\omega_k\left(F^{(r)},\vare(x) x^{(r+1)/k}; [0,1]\right)} \ge M.
\ee
Clearly,  $F \in C^{k+r-1}[0,1]$,   $\norm{F^{(k+r)}}{L_\infty[0,1]} \le c(k,r)$,  and $\norm{F^{(\nu)}}{[0,1]} \sim \eps^{k+r-\nu}$, $0\le \nu \le k+r-1$.
Hence,  $\w_k(F^{(r)},t; [0,1]) \le c \min\{t^k, \eps^k\}$.

Suppose that, for some $P_n\in\Pn$, \ineq{weakin} is not true. Then,  we must have
\[
 |F(x)-P_n(x)| \le M \omega_k\left(F^{(r)},\vare(x) x^{(r+1)/k}; [0,1]\right) , \quad x\in [0,\delta].
\]
Hence,
\[
 |F(x)-P_n(x)| \le c \vare^k(x) x^{r+1} ,
\]
and so $P_n^{(r+1)}(0) = F^{(r+1)}(0) \sim \eps^{k-1}$. Also,
 \[
 |P_n(x)| \le |F(x)|+|F(x)-P_n(x)| \le c  \eps^{k}, \quad x\in [0,\delta].
 \]
Hence, by Markov's inequality $\norm{P_n^{(r+1)}}{[0,\delta]} \le c(k,r,M,n,\delta) \eps^{k}$, and we get a contradiction by picking $\eps$ to be sufficiently small.
\end{proof}

%%%%%%%%%%%%%%%%%%%%%%%%%%%%%%%%%%%%%%%%%%%%%%%%%%%%%%%%%%%%%%%

\sect{Applications} \label{sec6}

\subsection{One estimate for $q$-monotone functions} %\label{subsection3.1}

It is clear that, given a function $f$ and a polynomial $P_n$ approximating it on $I$, one can achieve interpolation of $f$ at $\pm 1$ by adding a linear polynomial to $P_n$. This is a particular (and trivial) instance of a more general standard approach involving so-called Boolean sums that is often used in the literature (see,  \eg  \cites{D,CG89,De76,CG}).
The proof of the following lemma is trivial, and we only state it here for the sake of reader  convenience.
At the same time,  this lemma immediately implies \cor{corqmon} which is new and  quite far from being obvious, and  would be rather difficult to prove directly without employing \thm{main}.

\begin{lemma} \label{corzero}
Let $k, n\in\N$   and  $f\in C$, and suppose that $P_n\in\Pn$ is such that
$|f(x)-P_n(x)|\le A \omega_k(f ,\rho_n(x))$,  $x\in I$,
where $A\ge 1$ is some  constant.
Then, there exists a polynomial $Q_n\in \Pn$ such that, $Q_n'' \equiv P_n''$ and,  for each $1\le \ell \le k$,
\[
|f(x)-Q_n(x)|\le c(k) A \omega_\ell(f ,\varphi^{2/\ell}(x)n^{-2+ 2/\ell}),\quad \text{if} \quad   1-n^{-2}\le| x|\le1.
\]
\end{lemma}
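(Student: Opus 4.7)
The idea is to adjust $P_n$ by a linear polynomial so that the new polynomial satisfies the Hermite interpolation hypothesis of \thm{main} with $r=0$, and then quote that theorem. Since the correction is linear, the second derivative is unchanged, which gives the identity $Q_n''\equiv P_n''$ for free.

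Concretely, I would define
\[
Q_n(x):=P_n(x)+L(x),
\]
where $L$ is the (unique) linear polynomial determined by the interpolation conditions $L(1)=f(1)-P_n(1)$ and $L(-1)=f(-1)-P_n(-1)$. Then $Q_n\in\Pn$, $Q_n(\pm1)=f(\pm1)$, and $Q_n''=P_n''$. Evaluating the hypothesis at $x=\pm 1$ gives $|L(\pm1)|\le A\w_k(f,\rho_n(\pm1))=A\w_k(f,n^{-2})$, so, because $L$ is linear,
\[
\|L\|_I=\max\{|L(1)|,|L(-1)|\}\le A\w_k(f,n^{-2}).
\]

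Next, since $\rho_n(x)\ge n^{-2}$ on $I$, the monotonicity of $\w_k$ yields $\w_k(f,n^{-2})\le\w_k(f,\rho_n(x))$ for every $x\in I$, and hence
\[
|f(x)-Q_n(x)|\le|f(x)-P_n(x)|+|L(x)|\le 2A\,\w_k(f,\rho_n(x)),\quad x\in I.
\]
In the terminology of \defin{classKK} with $r=0$, this says $Q_n\in\K_n(f,2A,k,0)$. Moreover, $Q_n(\pm1)=f(\pm1)$ is precisely the Hermite interpolation hypothesis of \thm{main} in the case $r=0$.

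Finally, applying \thm{main} with $r=\nu=0$ (and constant $2A$ in place of $A$), one obtains, for every $1\le\ell\le k$ and every $x\in I$ with $1-n^{-2}\le|x|\le 1$,
\[
|f(x)-Q_n(x)|\le c(k)\cdot 2A\,\w_\ell\bigl(f,\varphi^{2/\ell}(x)n^{-2+2/\ell}\bigr),
\]
which is the desired estimate with constant $c(k)$ absorbing the factor $2$. No step is really an obstacle here; the only thing to be mindful of is that one must have the hypothesis of \thm{main} in the form of membership in the class $\K_n$ with the specified constant, and the linear correction is well-suited to delivering this without disturbing $P_n''$.
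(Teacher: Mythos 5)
Your proof is correct and follows essentially the same route as the paper: define $Q_n=P_n+L$ where $L$ is the linear interpolant of $f-P_n$ at $\pm1$, check $Q_n\in\K_n(f,cA,k,0)$ using $\rho_n(x)\ge n^{-2}$, and invoke Theorem~\ref{main} with $r=0$. The only (cosmetic) difference is that you bound $\|L\|_I$ by $\max\{|L(\pm1)|\}$ rather than by the sum, giving constant $2A$ instead of the paper's $3A$.
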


\begin{proof}
Let $L(g,\cdot)$ be the linear polynomial interpolating $g$ at $\pm 1$, \ie
$L(g,x) =   (1+x)g(1)/2 +  (1-x)g(-1)/2$, and define
$Q_n(x) := P_n(x) + L(f-P_n, x)$.
Then, $Q_n''\equiv P_n''$,  $Q_n(\pm 1) = f(\pm 1)$ and
\begin{align*}
|f(x)-Q_n(x)| & \le   |f(x)-P_n(x)| + |f(1)-P_n(1)| + |f(-1)-P_n(-1)| \\
& \le   A \omega_k(f ,\rho_n(x)) + 2 A \omega_k(f ,n^{-2}) \le 3A  \omega_k(f ,\rho_n(x)) .    % \qedhere % \popQED
\end{align*}
It remains to apply \thm{main} with $r=0$ in order to finish the proof.
\end{proof}

We say that a function $f\in C$ is $q$-monotone on $I$ if $\Delta_u^q(f,x) \ge 0$ for all $u>0$ and $x\in I$,
 and denote the set of all $q$-monotone (continuous) functions by $\Delta^{(q)}$.
 In particular, $\Delta^{(1)}$ and $\Delta^{(2)}$
are, respectively, the classes of all nondecreasing and convex
functions from $C$.

 \lem{corzero}   implies that non-interpolatory pointwise estimates for $q$-monotone polynomial approximation (see,  \eg \cite{KLPS} for discussions)   imply
 interpolatory ones if $q\ge 2$.
In particular, the following result holds. (In the case $q=1$, we use \cite{DY} and \thm{main}.)

\begin{corollary} \label{corqmon}
Let $q\in\N$   and  $f\in C  \cap\Delta^{(q)}$. Then, for every $n\in\N$, there exists a polynomial $P_n \in\Pn \cap \Delta^{(q)}$ such that
\[
|f(x)-P_n(x)| \le c(q)    \w_1\left(f, \min\{ \varphi^2(x), n^{-1}\varphi(x)\} \right)   .
\]
\end{corollary}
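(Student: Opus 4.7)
The plan is to use Lemma~\ref{corzero} as the bridge between existing (non-interpolatory) shape-preserving estimates and the sharper interpolatory form demanded here, and to handle $q=1$ separately because the Boolean-sum correction used in Lemma~\ref{corzero} does not preserve monotonicity.

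For $q\ge 2$, I would first invoke the known pointwise estimate for $q$-monotone polynomial approximation (see, \eg the survey \cite{KLPS}), which guarantees a polynomial $P_n\in\Pn\cap\Delta^{(q)}$ with
\[
|f(x)-P_n(x)|\le c(q)\,\omega_1(f,\rho_n(x)),\qquad x\in I.
\]
Then I would set $Q_n:=P_n+L(f-P_n,\cdot)$ as in the proof of Lemma~\ref{corzero}. Since $Q_n-P_n$ is a linear polynomial, its $q$-th divided differences vanish for $q\ge 2$, so $\Delta_u^q(Q_n,x)=\Delta_u^q(P_n,x)\ge 0$ and therefore $Q_n\in\Pn\cap\Delta^{(q)}$. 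Lemma~\ref{corzero} applied with $\ell=1$ and $k=1$ (after first noting the hypothesis is satisfied with $A=c(q)$) yields
\[
|f(x)-Q_n(x)|\le c(q)\,\omega_1(f,\varphi^2(x))\qquad\text{for}\ 1-n^{-2}\le |x|\le 1,
\]
while for $|x|\le 1-n^{-2}$ the proof of Lemma~\ref{corzero} already gives the bound $c(q)\,\omega_1(f,\rho_n(x))\sim c(q)\,\omega_1(f,n^{-1}\varphi(x))$. Since $\min\{\varphi^2(x),n^{-1}\varphi(x)\}$ equals $\varphi^2(x)$ precisely when $x\in S_n$ and is comparable to $n^{-1}\varphi(x)$ elsewhere, the two estimates combine to the required bound.

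For $q=1$ this argument fails because the slope of the interpolating linear polynomial $L(f-P_n,\cdot)$ can be of either sign, so $Q_n$ is generally not monotone. Here I would instead invoke the construction of DeVore and Yu \cite{DY}, which produces a monotone polynomial $P_n\in\Pn\cap\Delta^{(1)}$ satisfying both the classical estimate $|f(x)-P_n(x)|\le c\,\omega_1(f,\rho_n(x))$ and $P_n(\pm 1)=f(\pm 1)$, and then apply Theorem~\ref{main} directly with $r=0$, $k=\ell=1$. The theorem gives the improvement $\omega_1(f,\varphi^2(x))$ on $S_n$, which combined with the classical bound away from $S_n$ is exactly $\omega_1(f,\min\{\varphi^2(x),n^{-1}\varphi(x)\})$.

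The main obstacle is the monotone case $q=1$: one cannot afford to modify the approximating polynomial by an arbitrary linear correction, so the argument must start from a shape-preserving approximation that already interpolates at $\pm 1$. For every $q\ge 2$ the algebraic fact that adding a linear polynomial annihilates the $q$-th divided difference is what makes the Lemma~\ref{corzero} mechanism work transparently, and the rest is just bookkeeping between the regions $S_n$ and $I\setminus S_n$.
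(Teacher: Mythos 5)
Your proposal is correct and follows the same route as the paper: for $q\ge 2$ the argument is exactly the combination of a non-interpolatory pointwise $\omega_1$-estimate for $q$-monotone approximation (as surveyed in \cite{KLPS}) with Lemma~\ref{corzero}, the key observation being that the Boolean-sum linear correction does not disturb $\Delta^{(q)}$ when $q\ge 2$ (the paper records this via $Q_n''\equiv P_n''$ in the lemma statement), while for $q=1$ you correctly start from the DeVore--Yu \cite{DY} monotone polynomial that already interpolates at $\pm1$ and then apply Theorem~\ref{main} with $r=0$, $k=\ell=1$. The bookkeeping between $S_n$ and $I\setminus S_n$ producing $\min\{\varphi^2(x),n^{-1}\varphi(x)\}$ is likewise as intended.
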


\subsection{Polynomial  approximation with Hermite interpolation  of Sobolev and Lipschitz classes
}

Throughout this section, we assume that $Y$ consists of distinct points in $I$ with the multiplicity $r+1$ each, and recall that
$Z= Z(Y) = \{z_j\}_{j=0}^{\mu-1}$ is the subset  of all  distinct points in $Y$, and $\delta(Z)$ denotes the smallest distance among the points in $Z$, \ie  $\delta(Z) := \min_{0\le i  \le \mu-2} (z_{i+1}-z_i)$.

Recall that $W^r$ denotes the space of   functions on $[-1,1]$ for which $f^{(r-1)}$ is  absolutely continuous   and
$\norm{f^{(r)}}{\infty}    < \infty$, where
 $\norm{\cdot}{\infty}$ is the essential supremum  on $I$.

We have the following theorem on best interpolatory estimates of functions from $W^r$ by polynomials that immediately follows from \cor{19}.

\begin{theorem} % \label{wrthm}
Let $r, \mu \in\N$,  and  let $Z=\{z_j\}_{j=0}^{\mu-1} \subset I$ be a set of $\mu$ distinct points. % with the multiplicity $r+1$ each.
Then, for every $f\in W^r$  and $n\ge N(r,\mu, \delta(Z))$,
\be\label{estwr1}
 \inf_{P_n\in \Pn } \norm{ \frac{f-P_n}{ (\min\{\rho_n(x), \dist(x, Z) \})^r} }{\infty} \le c(r,\mu) \norm{f^{(r)}}{\infty}  .
\ee
\end{theorem}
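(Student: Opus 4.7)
The plan is to apply \cor{19} with smoothness level $r-1$ and with the multiset $Y$ obtained from $Z$ by assigning each $z_j$ multiplicity exactly $r$. The set-up is legitimate because $f\in W^r$ implies $f\in C^{r-1}$ with $\omega_1(f^{(r-1)},t)\le t\norm{f^{(r)}}{\infty}$, so the Hermite data $f^{(i)}(z_j)$, $0\le i\le r-1$, required for multiplicity-$r$ interpolation at each $z_j$ are well defined; the multiplicities $m_j=r=(r-1)+1$ satisfy the hypothesis $1\le m_j\le(r-1)+1$ of \cor{19}; and $\delta(Y)=\delta(Z)$, so the threshold on $n$ supplied by \cor{19} depends only on $r$, $\mu$, and $\delta(Z)$.

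Taking $k=\ell=1$ in \cor{19}, for any $n\ge N(r,\mu,\delta(Z))$ I obtain a polynomial $P_n\in\Pn$ that enjoys, on the one hand, the global estimate
\[
|f(x)-P_n(x)|\le c(r,\mu)\rho_n^{r-1}(x)\,\omega_1(f^{(r-1)},\rho_n(x))\le c(r,\mu)\rho_n^r(x)\norm{f^{(r)}}{\infty},\quad x\in I,
\]
and, on the other hand, the $m_j=(r-1)+1$ interpolatory estimate: whenever $|x-z_j|\le\rho_n(z_j)$,
\[
|f(x)-P_n(x)|\le c(r,\mu)|x-z_j|^{r-1}\omega_1(f^{(r-1)},|x-z_j|)\le c(r,\mu)|x-z_j|^r\norm{f^{(r)}}{\infty}.
\]

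To conclude, for each $x\in I$ I let $z_j\in Z$ be a closest point to $x$, so that $\dist(x,Z)=|x-z_j|$. If $|x-z_j|\le\rho_n(z_j)$, the second display already delivers a bound in terms of $\dist(x,Z)^r\norm{f^{(r)}}{\infty}$. In the opposite case $|x-z_j|>\rho_n(z_j)$, the standard slow variation of $\rho_n$ (already referenced in the paper after \thm{maingen}) forces $\rho_n(x)\le c\,|x-z_j|=c\,\dist(x,Z)$, and then the global estimate yields the same type of bound. Taking the weaker of the two outputs shows that $|f(x)-P_n(x)|\le c(r,\mu)(\min\{\rho_n(x),\dist(x,Z)\})^r\norm{f^{(r)}}{\infty}$ a.e., which is \ineq{estwr1}.

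I do not foresee a serious obstacle. The only point requiring any care is reconciling the conditional hypothesis $|x-z_j|\le\rho_n(z_j)$ of \cor{19}'s improved estimate with the uniform minimum $\min\{\rho_n(x),\dist(x,Z)\}$ in \ineq{estwr1}, and this is handled by the elementary slow-variation dichotomy sketched above. Everything else is a direct specialization of \cor{19} together with the trivial Lipschitz bound on $\omega_1(f^{(r-1)},\cdot)$ available for $f\in W^r$.
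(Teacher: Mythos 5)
Your argument is correct and follows the paper's approach: the paper simply asserts that the theorem ``immediately follows from \cor{19},'' and your proof supplies the details, using the slow variation of $\rho_n$ to combine the local and global estimates from that corollary into the stated $\left(\min\{\rho_n(x),\dist(x,Z)\}\right)^r$ bound. One point worth highlighting: you invoke \cor{19} at smoothness level $r-1$ with multiplicity $r$ at each $z_j$, rather than the multiplicity-$(r+1)$ convention the paper announces at the start of that section; this is actually the right move, since $W^r\not\subset C^r$, so the pointwise Hermite data $f^{(r)}(z_j)$ needed for multiplicity $r+1$ are not available without first mollifying $f$, and your parametrization sidesteps that issue cleanly.
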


\begin{remark}
It follows from \lem{umzhlem} that \ineq{estwr1} is also true for all  $n\ge \mu r-1$ if the constant $c$ is allowed to depend on $\delta(Z)$.
\end{remark}

The following lemma
shows that the quantity $\min\{\rho_n(x), \dist(x, Z) \}$ in \ineq{estwr1} is exact in the sense that one cannot improve the rate of approximation near any of the points in $Z$.

\begin{lemma} %\label{lemn2}
For any $r\in\N$ and $z\in I$  there exist  a function $f\in W^r$ and a positive constant $c_0=c_0(r)$ such that, for any $n\in\N$ and
 any   $P_n\in\Pn$,
\be \label{in32}
\limsup_{x\to z} \frac{|f(x)-P_n(x)|}{|x-z|^{r}  } \ge  c_0  \norm{f^{(r)}}{\infty} .
\ee
\end{lemma}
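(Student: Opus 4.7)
The obstruction to polynomial approximation near a single point $z$ must come from a \emph{non-polynomial} part of $f$, since any polynomial can absorb the full Taylor expansion of $f$ at $z$. Accordingly, the plan is to construct a specific $f\in W^r$ whose behaviour near $z$ is oscillatory on the scale $|x-z|^r$ yet which is as smooth as $W^r$ allows. The candidate, for $z\in[-1,1)$, is
\[
f(x):=\begin{cases} (x-z)^r\cos\!\bigl(\log(x-z)\bigr)/r!, & x>z,\\ 0, & x\le z,\end{cases}
\]
with the obvious mirror image used for $z=1$. A straightforward induction shows that, for $x>z$ and $0\le k\le r$,
\[
\frac{d^k}{dx^k}\bigl[(x-z)^r\cos(\log(x-z))\bigr]=(x-z)^{r-k}B_k(\log(x-z)),
\]
where each $B_k$ is a fixed bounded trigonometric polynomial in $\cos$ and $\sin$ determined by $r$. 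Hence $f\in C^{r-1}(I)$ with $f^{(k)}(z)=0$ for $0\le k\le r-1$, $f^{(r-1)}$ is absolutely continuous, and $\|f^{(r)}\|_\infty\le K_r$ for a constant $K_r$ depending only on $r$; so $f\in W^r$.

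Given any polynomial $P_n$, I would expand $P_n(x)=\sum_{j\ge 0} c_j(x-z)^j$ and work on whichever side of $z$ makes $f$ genuinely nonzero (call it the oscillatory side). Note that $|f(x)|/|x-z|^r\le 1/r!$ throughout. If $c_{j^*}\ne 0$ for the smallest $j^*<r$, then $|P_n(x)|/|x-z|^r\sim|c_{j^*}||x-z|^{j^*-r}\to\infty$, and combining this with the uniform bound on $|f|/|x-z|^r$ forces $\limsup_{x\to z}|f(x)-P_n(x)|/|x-z|^r=\infty$. Otherwise $c_0=\dots=c_{r-1}=0$, so $P_n(x)=(x-z)^rQ(x-z)$ with $Q(0)=c_r$, and on the oscillatory side
\[
\frac{f(x)-P_n(x)}{|x-z|^r}=\frac{\cos(\log|x-z|)}{r!}-\varepsilon\,Q(x-z),
\]
where $\varepsilon\in\{+1,-1\}$ is a sign determined by which side we approach $z$ from. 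Since $Q(x-z)\to c_r$ and $\cos(\log|x-z|)$ densely visits $[-1,1]$ as $x\to z$, the limsup is bounded below by $\max\bigl(|1/r!-\varepsilon c_r|,\,|1/r!+\varepsilon c_r|\bigr)\ge 1/r!$.

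Combining the two cases gives $\limsup_{x\to z}|f-P_n|/|x-z|^r\ge 1/r!\ge(r!K_r)^{-1}\|f^{(r)}\|_\infty$, so one may set $c_0(r):=1/(r!K_r)$, which depends only on $r$ as required, and the bound is automatically uniform in $n$. The main technical point will be the uniform bound on $f^{(r)}$: each differentiation of $(x-z)^r\cos(\log(x-z))$ naively produces a factor $1/(x-z)$, and the hard part is to verify that the decreasing powers of $(x-z)$ cancel those divergent factors exactly, leaving a bounded combination of $\cos(\log(x-z))$ and $\sin(\log(x-z))$. Once that is in hand, the rest of the argument is an elementary expansion together with the density of $\{\cos(\log u):u\in(0,\delta)\}$ in $[-1,1]$.
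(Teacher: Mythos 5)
Your proposal is correct and uses essentially the same key construction as the paper: the function $(x-z)_+^{r}\cos(\log(x-z))$ (the paper uses $\cos(2\pi\ln|x-z|)$ and no $1/r!$ normalization, but that is immaterial). The verification that $f\in W^r$ via the inductive formula $\frac{d^k}{dx^k}[(x-z)^r\cos(\log(x-z))]=(x-z)^{r-k}B_k(\log(x-z))$ is sound and matches what one needs.

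Where you diverge is in the closing step. You expand $P_n(x)=\sum_j c_j(x-z)^j$ and split into two cases: (a) some $c_{j^*}\neq 0$ with $j^*<r$, whence $|P_n(x)|/|x-z|^r\to\infty$ while $|f(x)|/|x-z|^r$ stays bounded; and (b) $c_0=\dots=c_{r-1}=0$, whence $P_n(x)/|x-z|^r\to\pm c_r$ while $\cos(\log|x-z|)$ attains every value of $[-1,1]$ infinitely often, giving $\limsup\ge\max(|1/r!-\varepsilon c_r|,|1/r!+\varepsilon c_r|)\ge 1/r!$. The paper instead bypasses the case split by using the elementary observation that a polynomial has at most finitely many zeros, hence constant sign on some $[z,z+\eps]$; assuming WLOG $P_n\le 0$ there, one simply evaluates at $x_m:=z+e^{-m}$ (where $\cos(2\pi\ln(x_m-z))=1$), so that $|f(x_m)-P_n(x_m)|\ge f(x_m)=(x_m-z)^r$ and the bound follows in one line. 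Both arguments are elementary and of comparable length; the paper's is slightly slicker in that it avoids the dichotomy and the explicit handling of the limit $Q(x-z)\to c_r$, while yours makes the mechanism---that a polynomial divided by $|x-z|^r$ either blows up or stabilizes, neither of which can track the oscillation---somewhat more transparent. Both yield the same constant $c_0(r)=\|f^{(r)}\|_\infty^{-1}$ up to the inessential $1/r!$ normalization.

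One small polish: the phrase ``density of $\{\cos(\log u)\}$ in $[-1,1]$'' undersells what happens---as $u\to 0^+$, $\cos(\log u)$ attains \emph{every} value of $[-1,1]$ infinitely often, which is what the limsup computation actually uses. Also, in case~(b) the sign $\varepsilon$ is really $(-1)^r$ on the left of $z$ and $+1$ on the right, not a free parameter, but since $\max(|a-b|,|a+b|)\ge|a|$ regardless of the sign of $b$, the conclusion is unaffected.
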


\begin{proof} Without loss of generality, we can assume that $z\le 0$.
If    $f(x) := (x-z)_+^{r} \cos(2\pi \ln|x-z|)$, % where $r\in\N$ is given. % if $x>z$ and $f(x):=0$, if $x\le z$.
then  $f\in W^r$ and $\norm{f^{(r)}}{\infty} \sim 1$.
We will show that \ineq{in32} holds with $c_0 := \norm{f^{(r)}}{\infty}^{-1}$.
Let $P_n$ be an arbitrary polynomial from $\Pn$, and
since $P_n$ has at most $n$ zeros, there
exists $\eps>0$ such that   $P_n(x)$ is either nonnegative or nonpositive on $[z,z+\eps]$. Without loss of generality suppose that $P_n(x) \le 0$, $z\le x \le z+\eps$.
Then
\[
|f(x_m)-P_n(x_m)| \ge  f(x_m) = (x_m-z)^{r} , \quad \mbox{\rm for all }\;  x_m := z+e^{-m},
\]
  where $m\in\N$, $m\ge |\ln \eps|$, and \ineq{in32} follows.
 \end{proof}

We now recall that
  $\Lip^*\alpha $ denotes the space of all functions $f$ on $I$ such that the seminorm $\sn := \sup_{t>0} \left( t^{\nu-\alpha} \w_2(f^{(\nu)}, t) \right) < \infty$,
where $\nu :=   \lceil\alpha\rceil - 1$.
Together with the classical inverse theorems (see,  \eg  \cite{DL}*{Theorem 8.6.1}), \ineq{classdir} implies that,  if $\alpha>0$, then a function $f$ is in $\Lip^*\alpha$ if and only if
 \[ %\label{lipin}
 \inf_{P_n\in\Pn} \norm{\rho_n^{-\alpha}  (f -P_n )}{} =  O(1) .
 \]

 Given $\alpha >0$,  for $f \in \Lip^*\alpha$, we define $g := f-L$, where $L$ is a  polynomial from $\Poly_{\nu+1}$ such that $L^{(\nu)}$ interpolates $f^{(\nu)}$ at the endpoints of $I$. Then,
by the Marchaud inequality, if $\alpha \not\in\N$, we have
\begin{align*}
\w_1(g^{(\nu)}, t) & \le c(\alpha) \left(  t^{\alpha-\nu} \sn +   t \norm{g^{(\nu)}}{} \right) \\
&  \le  c(\alpha)    t^{\alpha-\nu} \sn + c(\alpha) t \w_2(f^{(\nu)}, 1)    \le c(\alpha)   t^{\alpha-\nu} \sn,
\end{align*}
and, if $\alpha \in \N$ (and so $\nu=\alpha-1$),
\[
\w_1(g^{(\nu)}, t) \le c(\alpha)   t |\ln t| \,  \sn.
\]

The following estimate for functions from $\Lip^*\alpha$ classes immediately follows from \cor{19}.

\begin{corollary} % \label{corn1}
Let $Z=\{z_j\}_{j=0}^{\mu-1} \subset I$ be a set of $\mu$ distinct points.
If $\alpha >0$ and
 $f \in \Lip^*\alpha$, then for any $n\ge N(\alpha, \mu, \delta(Z))$,
there exists $P_n\in\Pn$ such that, for all $x\in I$,
\be \label{corin}
|f(x)-P_n(x)| \le
c(\alpha, \mu) \sn  \, \cdot \,
\begin{cases}
 \left(  \min\{ \dist(x, Z),  \rho_n(x) \}  \right)^\alpha   ,  &  \text{if }\; \alpha \not\in\N, \\
\left(  \min \{  \DDD (x)   , \rho_n (x)  \}  \right)^\alpha  , & \text{if }\; \alpha \in \N,
\end{cases}
\ee
where $\DDD (x) :=  \dist (x, Z)   \left|\ln \big( \dist(x, Z)/3 \big) \right|^{1/\alpha}$.
\end{corollary}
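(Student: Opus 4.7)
The plan is to derive \ineq{corin} directly from \cor{19} by first replacing $f$ with a suitable correction $g = f - L$ (so as to gain $\omega_1$ control over $g^{(\nu)}$) and then selecting the parameters $k,\ell$ in \cor{19} differently in the non-integer and integer cases, so as to land on the exact rate stated (in particular, to avoid a spurious logarithm in the global estimate when $\alpha\in\N$).

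Concretely, set $\nu := \lceil\alpha\rceil - 1$, let $L\in\Poly_{\nu+1}$ satisfy $L^{(\nu)}(\pm 1)=f^{(\nu)}(\pm 1)$, and put $g := f - L\in C^\nu$. The paragraph immediately preceding the statement already records, via the Marchaud inequality, that
\[
\w_1(g^{(\nu)},t)\le\begin{cases} c(\alpha)\,t^{\alpha-\nu}\sn,&\alpha\notin\N,\\ c(\alpha)\,t|\ln t|\,\sn,&\alpha\in\N,\end{cases}
\]
and since $L^{(\nu)}$ is linear we have, for every $\alpha>0$, $\w_2(g^{(\nu)},t)=\w_2(f^{(\nu)},t)\le t^{\alpha-\nu}\sn$. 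We then apply \cor{19} to $g$ with $r := \nu$, choosing $Y$ to consist of each $z_i\in Z$ repeated $r+1$ times (so $m_j=r+1$ for every $j$, $s=\mu(r+1)$, and $\delta(Y)=\delta(Z)$), which produces $\widetilde P_n\in\Pn$; the required polynomial is then $P_n := \widetilde P_n + L\in\Pn$, and $f-P_n = g-\widetilde P_n$.

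For $\alpha\notin\N$ we take $k=\ell=1$: the global estimate of \cor{19} reads $|g(x)-\widetilde P_n(x)|\le c\rho_n^\nu(x)\w_1(g^{(\nu)},\rho_n(x))\le c(\alpha,\mu)\sn\,\rho_n^\alpha(x)$, while the $m_j=r+1$ near-interpolation branch, valid on $|x-z_j|\le\rho_n(z_j)$, gives $|g(x)-\widetilde P_n(x)|\le c|x-z_j|^\nu\w_1(g^{(\nu)},|x-z_j|)\le c(\alpha,\mu)\sn\,|x-z_j|^\alpha$. For $\alpha\in\N$ we instead take $k=2$ (so the global bound uses $\w_2(g^{(\nu)},\rho_n(x))\le\rho_n(x)\sn$ and is logarithm-free) together with $\ell=1$ for the near branch, which now yields $|g(x)-\widetilde P_n(x)|\le c|x-z_j|^\alpha\bigl|\ln|x-z_j|\bigr|\sn\le c(\alpha,\mu)\sn\,\dd(x)^\alpha$. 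To assemble the minimum in \ineq{corin}, fix $x\in I$ and let $z_{j^*}$ be a nearest point of $Z$: if $\dist(x,Z)\le\rho_n(x)$ then $|x-z_{j^*}|\le\rho_n(x)$, so $\rho_n(z_{j^*})\sim\rho_n(x)$ and the near branch at $j=j^*$ supplies the $\dist(x,Z)^\alpha$ (respectively $\dd(x)^\alpha$) factor; otherwise $\dist(x,Z)>\rho_n(x)$ and the global bound $\rho_n^\alpha(x)$ already realises the minimum. The main obstacle is making the correct choice $k=2,\,\ell=1$ in the integer case: with $k=1$ the global bound would inherit a $|\ln\rho_n(x)|$ factor from $\w_1(g^{(\nu)},\rho_n(x))$, and only the $\w_2$-bound is logarithm-free, which in turn is available precisely because $L^{(\nu)}$ is linear and so $\w_2(L^{(\nu)},t)=0$.
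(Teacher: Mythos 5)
Your argument is correct and is exactly the route the paper intends: subtracting $L\in\Poly_{\nu+1}$ is what makes the Marchaud bounds for $\w_1(g^{(\nu)},\cdot)$ usable (it forces $g^{(\nu)}(\pm1)=0$, hence $\norm{g^{(\nu)}}{}\le c\sn$, while leaving $\w_2$ unchanged), and the choices $k=\ell=1$ for $\alpha\notin\N$ and $k=2,\ \ell=1$ for $\alpha\in\N$ are precisely what make the global branch of \cor{19} logarithm-free and the near branch deliver $\dist(x,Z)^\alpha$, respectively $\DDD(x)^\alpha$, after absorbing $|\ln t|\le|\ln(t/3)|$ for $t\le 1$.

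One small inaccuracy in the assembly step deserves a patch. The near branch of \cor{19} is stated under $|x-z_{j^*}|\le\rho_n(z_{j^*})$, whereas $\dist(x,Z)\le\rho_n(x)$ only gives $|x-z_{j^*}|\le\rho_n(x)\le C\rho_n(z_{j^*})$ for an absolute constant $C$; ``$\rho_n(z_{j^*})\sim\rho_n(x)$'' does not by itself put you in the near branch. In the leftover regime $\rho_n(z_{j^*})<|x-z_{j^*}|\le\rho_n(x)$ you should instead use the global branch: there $\rho_n(x)\le C\rho_n(z_{j^*})<C\,\dist(x,Z)\le C\,\DDD(x)$, so $\min\{\dist(x,Z),\rho_n(x)\}\ge\rho_n(x)/C$ (and likewise with $\DDD$), and the global bound $c\sn\,\rho_n^\alpha(x)$ already realises the claimed minimum up to a constant. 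This is a one-line fix and does not alter the structure of your proof.
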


\begin{remark}
It follows from \lem{umzhlem} that \ineq{corin} is also true for all  $n\ge \mu \lceil \alpha\rceil-1$ if the constant $c$ is allowed to depend on $\delta(Z)$.
\end{remark}

The following lemma shows   that  the estimate \ineq{corin} is exact in the sense that one cannot expect a better rate of approximation near points in $Z$.

\begin{lemma} %\label{lemn1}
For any $\alpha>0$ and $z\in I$,  there exist a function $f\in\Lip^*\alpha$ and a positive constant $c_0=c_0(\alpha)$ such that,
 for any $n\in\N$ and
 any   $P_n\in\Pn$,
\[
\limsup_{x\to z} \frac{|f(x)-P_n(x)|}{|x-z|^{\alpha}   |\psi_\alpha(x)|} \ge c_0 \sn ,
\]
where $\psi_\alpha(x) :=  \ln|x-z|$ if $\alpha\in\N$, and $\psi_\alpha(x) := 1$ if $\alpha\not\in\N$.
\end{lemma}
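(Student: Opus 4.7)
The plan is to exhibit an explicit worst-case $f\in\Lip^*\alpha$ with seminorm bounded by a constant $C(\alpha)$, and then derive the desired limsup via an asymptotic analysis of the Taylor expansion of $P_n$ about $z$. The construction splits according to whether $\alpha$ is an integer.

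For $\alpha\notin\N$, I would take $f(x):=(x-z)_+^{\alpha}$ (replaced by $(z-x)_+^\alpha$ if $z$ is the right endpoint). Setting $\nu:=\lfloor\alpha\rfloor=\lceil\alpha\rceil-1$, the derivative $f^{(\nu)}$ is a nonzero multiple of $(x-z)_+^{\alpha-\nu}$ and is H\"older of order $\alpha-\nu\in(0,1)$, whence $\w_2(f^{(\nu)},t)\le c(\alpha)t^{\alpha-\nu}$ and therefore $\sn\le C(\alpha)$. For $\alpha\in\N$, I would take the Zygmund-type function $f(x):=(x-z)^{\alpha}\ln|x-z|$ (with $f(z):=0$). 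A Leibniz-rule computation gives
\[
f^{(\alpha-1)}(x)=\alpha!\,(x-z)\ln|x-z|+c_\alpha (x-z),
\]
and since the linear term contributes nothing to $\w_2$ while the classical Zygmund estimate $\w_2((x-z)\ln|x-z|,t)\le Ct$ controls the first summand, one obtains $\sn=\sup_{t>0}t^{-1}\w_2(f^{(\alpha-1)},t)\le C(\alpha)$.

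To prove the lower bound, fix an arbitrary $P_n\in\Pn$ and consider the one-sided approach $h\to 0^+$ along $x=z+h$ inside $I$ (or $x=z-h$ if necessary). Expand $P_n(z+h)=\sum_{k\ge 0}a_k h^k$ and let $j_0$ be the smallest index with $a_{j_0}\ne 0$ (if no such index exists, the conclusion is immediate). If $j_0<\alpha$, then $|P_n(z+h)|\sim|a_{j_0}|h^{j_0}$ overwhelms $|f(z+h)|=O(h^\alpha|\psi_\alpha(z+h)|)$, so the ratio $|f(z+h)-P_n(z+h)|/(h^\alpha|\psi_\alpha(z+h)|)$ blows up at rate $h^{j_0-\alpha}/|\psi_\alpha(z+h)|$. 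If instead $j_0\ge\alpha$, then $P_n(z+h)=O(h^{\alpha})$ and a direct subtraction against the leading term of $f(z+h)$ (namely $h^\alpha\ln h$ when $\alpha\in\N$ and $h^\alpha$ otherwise) shows that the ratio tends to $1$. In every case $\limsup_{x\to z}|f(x)-P_n(x)|/(|x-z|^\alpha|\psi_\alpha(x)|)\ge 1$, and choosing $c_0:=1/C(\alpha)$ finishes the proof.

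The main technical obstacle is the modulus-of-smoothness bound in the integer case: the classical Zygmund estimate $\w_2((x-z)\ln|x-z|,t)\le Ct$ is standard but requires a careful case split depending on whether $|x-z|$ is large or small compared with $t$. Everything else reduces to elementary asymptotic matching of the power (and logarithmic) singularity of $f$ at $z$ against the polynomial Taylor tail of $P_n$.
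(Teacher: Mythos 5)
Your proof is correct and follows essentially the same route as the paper: both choose the same singular test function $f(x)=(x-z)_+^{\alpha}\psi_\alpha(x)$ (verifying $\sn\le c(\alpha)$), and both deduce the lower bound by observing that either $P_n$ has a nonzero Taylor coefficient of order below $\alpha$ at $z$ (so the ratio blows up) or it vanishes to order $\ge\lceil\alpha\rceil$ (so $P_n=O((x-z)^{\lceil\alpha\rceil})$ is negligible against $f$ and the ratio tends to $1$). The only stylistic difference is that the paper packages the second case as a proof by contradiction using Markov's inequality to bound $P_n^{(\nu+1)}$, whereas you argue directly from the Taylor expansion of $P_n$ at $z$; the underlying dichotomy on the order of vanishing is identical.
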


\begin{proof} Without loss of generality, we can assume that $z\le 0$.
If  $f(x) := (x-z)_+^{\alpha}\psi_\alpha(x)$, % if $x>z$ and $f(x):=0$, if $x\le z$.
then  $\sn \le c (\alpha)$, and set $c_0 := \sn^{-1}/2$.
If the claim of the lemma is not true, then
there exists a polynomial
$P_n\in \Pn$ and $\eps>0$    such that
\be \label{newineq}
 |f(x)-P_n(x)| \le \frac12 (x-z)^{\alpha}  |\psi_\alpha(x)|  , \quad x\in [z,z+\eps].
\ee
Then,   \ineq{newineq} implies that  $f^{(i)}(z) = P_n^{(i)}(z)=0$, for all $0\le i \le \nu:= \lceil \alpha \rceil - 1$. Now, by Markov's inequality, we have
$\norm{P_n^{(\nu+1)}}{}   \le c(n,\nu)  \norm{P_n}{} =: A$, and so
\[
|P_n(x)| = \frac{1}{\nu!} \left| \int_{z}^x (x-t)^\nu P_n^{(\nu+1)}(t) \, dt \right| \le \frac{A}{(\nu+1)!} (x-z)^{\nu+1} , \quad x\in I .
\]
Hence, for $x>z$,
\[
\frac{|f(x)-P_n(x)|}{(x-z)^{\alpha} |\psi_{\alpha}(x)|} \ge 1 - \frac{|P_n(x)|}{(x-z)^{\alpha} |\psi_{\alpha}(x)|}
 \ge
  1 - \frac{A}{(\nu+1)!} \cdot  \frac{(x-z)^{\lceil \alpha \rceil -\alpha}}{|\psi_{\alpha} (x)|}  \to 1
\]
as $x\to z^+$,  which contradicts \ineq{newineq}.
\end{proof}

\begin{bibsection}
\begin{biblist}

\bib{AB}{article}{
   author={Andrievskii, V. V.},
   author={Blatt, H.-P.},
   title={Polynomial approximation of functions on a quasi-smooth arc with
   Hermitian interpolation},
   journal={Constr. Approx.},
   volume={30},
   date={2009},
   number={1},
   pages={121--135},
%   issn={0176-4276},
%    review={\MR{2519657}},
%    doi={10.1007/s00365-008-9040-0},
}

\bib{BK95}{article}{
   author={Bal\'{a}zs, K.},
   author={Kilgore, T.},
   title={On some constants in simultaneous approximation},
   journal={Internat. J. Math. Math. Sci.},
   volume={18},
   date={1995},
   number={2},
   pages={279--286},
%   issn={0161-1712},
%    review={\MR{1318360}},
%   doi={10.1155/S0161171295000342},
}

\bib{B59}{article}{
   author={Brudny\u{\i}, Yu. A.},
   title={Approximation by integral functions on the exterior of a segment
   or on a semi-axis},
   language={Russian},
   journal={Dokl. Akad. Nauk SSSR},
   volume={124},
   date={1959},
   pages={739--742},
%   issn={0002-3264},
%    review={\MR{0101444}},
}

\bib{B}{article}{
   author={Brudny\u{\i}, Yu. A.},
   title={Generalization of a theorem of A. F. Timan},
   language={Russian},
   journal={Dokl. Akad. Nauk SSSR},
   volume={148},
   date={1963},
   pages={1237--1240},
%   issn={0002-3264},
%    review={\MR{0146574}},
}

\bib{BG}{article}{
   author={Brudnyi, Yu. A.},
   author={Gopengauz, I. E.},
   title={On an approximation family of discrete polynomial operators},
   journal={J. Approx. Theory},
   volume={164},
   date={2012},
   number={7},
   pages={938--953},
%   issn={0021-9045},
%   review={\MR{2922623}},
%   doi={10.1016/j.jat.2012.03.007},
}

\bib{CG89}{article}{
   author={Cao, J. D.},
   author={Gonska, H. H.},
   title={Computation of DeVore-Gopengauz-type approximants},
   conference={
      title={Approximation theory VI, Vol. I},
      address={College Station, TX},
      date={1989},
   },
   book={
      publisher={Academic Press, Boston, MA},
   },
   date={1989},
   pages={117--120},
%   review={\MR{1090978}},
%    doi={10.1002/macp.1989.021900113},
}

\bib{CG}{article}{
   author={Cao, J. D.},
   author={Gonska, H. H.},
   title={Approximation by Boolean sums of positive linear operators. II.
   Gopengauz-type estimates},
   journal={J. Approx. Theory},
   volume={57},
   date={1989},
   number={1},
   pages={77--89},
%   issn={0021-9045},
%   review={\MR{990805}},
%    doi={10.1016/0021-9045(89)90085-3},
}

\bib{Da}{article}{
   author={Dahlhaus, R.},
   title={Pointwise approximation by algebraic polynomials},
   journal={J. Approx. Theory},
   volume={57},
   date={1989},
   number={3},
   pages={274--277},
%   issn={0021-9045},
%    review={\MR{999862}},
%    doi={10.1016/0021-9045(89)90042-7},
}

\bib{D}{article}{
   author={DeVore, R. A.},
   title={Pointwise approximation by polynomials and splines},
   conference={
      title={The theory of the approximation of functions},
      address={Proc. Internat. Conf., Kaluga},
      date={1975},
   },
   book={
      publisher={``Nauka'', Moscow},
   },
   date={1977},
    pages={132--141},
%    review={\MR{525517}},
}

\bib{De76}{article}{
   author={DeVore, R. A.},
   title={Degree of approximation},
   conference={
      title={Approximation theory, II},
      address={Proc. Internat. Sympos., Univ. Texas, Austin, Tex.},
      date={1976},
   },
   book={
      publisher={Academic Press, New York},
   },
   date={1976},
    pages={117--161},
%    review={\MR{0440865}},
}

\bib{DL}{book}{
   author={DeVore, R. A.},
   author={Lorentz, G. G.},
   title={Constructive approximation},
   series={Grundlehren der Mathematischen Wissenschaften [Fundamental
   Principles of Mathematical Sciences]},
   volume={303},
   publisher={Springer-Verlag, Berlin},
   date={1993},
   pages={x+449},
%   isbn={3-540-50627-6},
%   review={\MR{1261635}},
%   doi={10.1007/978-3-662-02888-9},
}

\bib{DY}{article}{
   author={DeVore, R. A.},
   author={Yu, X. M.},
   title={Pointwise estimates for monotone polynomial approximation},
   journal={Constr. Approx.},
   volume={1},
   date={1985},
   number={4},
   pages={323--331},
%   issn={0176-4276},
%    review={\MR{891762}},
%   doi={10.1007/BF01890039},
}

\bib{DJ}{article}{
   author={Ditzian, Z.},
   author={Jiang, D.},
   title={Approximation of functions by polynomials in $C[-1,1]$},
   journal={Canad. J. Math.},
   volume={44},
   date={1992},
   number={5},
   pages={924--940},
%   issn={0008-414X},
%    review={\MR{1186473}},
%   doi={10.4153/CJM-1992-057-2},
}

\bib{D56}{article}{
   author={Dzyadyk, V. K.},
   title={Constructive characterization of functions satisfying the
   condition ${\rm Lip}\,\alpha(0<\alpha<1)$ on a finite segment of
   the real axis},
   language={Russian},
   journal={Izv. Akad. Nauk SSSR. Ser. Mat.},
   volume={20},
   date={1956},
   pages={623--642},
%   issn={0373-2436},
%   review={\MR{0081376}},
}

\bib{D-1958}{article}{
   author={Dzyadyk, V. K.},
   title={A further strengthening of Jackson's theorem on the approximation
   of continuous functions by ordinary polynomials},
   language={Russian},
   journal={Dokl. Akad. Nauk SSSR},
   volume={121},
   date={1958},
   pages={403--406},
%   issn={0002-3264},
 %  review={\MR{0101438}},
}

\bib{D59}{article}{
   author={Dzyadyk, V. K.},
   title={On a problem of S. M. Nikol'ski\u{\i} in a complex region},
   language={Russian},
   journal={Izv. Akad. Nauk SSSR. Ser. Mat.},
   volume={23},
   date={1959},
   pages={697--736},
%   issn={0373-2436},
 %  review={\MR{0109982}},
}

\bib{DS}{book}{
   author={Dzyadyk, V. K.},
   author={Shevchuk, I. A.},
   title={Theory of uniform approximation of functions by polynomials},
%   note={Translated from the Russian by Dmitry V. Malyshev, Peter V.
%   Malyshev and Vladimir V. Gorunovich},
   publisher={Walter de Gruyter GmbH \& Co. KG, Berlin},
   date={2008},
   pages={xvi+480},
%   isbn={978-3-11-020147-5},
%   review={\MR{2447076}},
%   doi={10.1515/9783110208245},
}

\bib{F-1959}{article}{
   author={Freud, G.},
   title={\"{U}ber die Approximation reeller stetigen Funktionen durch
   gew\"{o}hnliche Polynome},
   language={German},
   journal={Math. Ann.},
   volume={137},
   date={1959},
   pages={17--25},
%   issn={0025-5831},
%    review={\MR{101439}},
%   doi={10.1007/BF01343070},
}

\bib{GS}{article}{
   author={Gilewicz, J.},
   author={Shevchuk, I. A.},
   title={Comonotone approximation},
   language={Russian, with English and Russian summaries},
   journal={Fundam. Prikl. Mat.},
   volume={2},
   date={1996},
   number={2},
   pages={319--363},
%   issn={1560-5159},
%   review={\MR{1793407}},
}

\bib{GH}{article}{
   author={Gonska, H.},
   author={Hinnemann, E.},
   title={Punktweise Absch\"{a}tzungen zur Approximation durch algebraische Polynome},
   language={German},
   journal={Acta Math. Hungar.},
   volume={46},
   date={1985},
   number={3-4},
   pages={243--254},
%   issn={0236-5294},
%    review={\MR{832716}},
%    doi={10.1007/BF01955736},
}

\bib{GLSW}{article}{
   author={Gonska, H. H.},
   author={Leviatan, D.},
   author={Shevchuk, I. A.},
   author={Wenz, H.-J.},
   title={Interpolatory pointwise estimates for polynomial approximation},
   journal={Constr. Approx.},
   volume={16},
   date={2000},
   number={4},
   pages={603--629},
%   issn={0176-4276},
%   review={\MR{1771698}},
%   doi={10.1007/s003650010008},
}

\bib{Gop}{article}{
   author={Gopengauz, I. E.},
   title={On a theorem of A. F. Timan on the approximation of functions by
   polynomials on a finite interval},
   language={Russian},
   journal={Mat. Zametki},
   volume={1},
   date={1967},
   pages={163--172},
%   issn={0025-567X},
%   review={\MR{208232}},
}

\bib{HG}{article}{
   author={Hinnemann, E.},
   author={Gonska, H. H.},
   title={Generalization of a theorem of DeVore},
   conference={
      title={Approximation theory, IV},
      address={College Station, Tex.},
      date={1983},
   },
   book={
      publisher={Academic Press, New York},
   },
   date={1983},
   pages={527--532},
 %   review={\MR{754387}},
}

\bib{hky}{article}{
   author={Hu, Y. K.},
   author={Kopotun, K. A.},
   author={Yu, X. M.},
   title={Constrained approximation in Sobolev spaces},
   journal={Canad. J. Math.},
   volume={49},
   date={1997},
   number={1},
   pages={74--99},
%   issn={0008-414X},
%   review={\MR{1437201}},
%   doi={10.4153/CJM-1997-004-2},
}

\bib{KP96}{article}{
   author={Kilgore, T.},
   author={Prestin, J.},
   title={Pointwise Gopengauz estimates for interpolation},
   journal={Ann. Univ. Sci. Budapest. Sect. Comput.},
   volume={16},
   date={1996},
   pages={253--261},
%   issn={0138-9491},
 %   review={\MR{1453920}},
}

\bib{KS86}{article}{
   author={Kis, O.},
   author={Szabados, J.},
   title={On some de la Vall\'{e}e-Poussin type discrete linear operators},
   journal={Acta Math. Hungar.},
   volume={47},
   date={1986},
   number={1-2},
   pages={239--260},
%   issn={0236-5294},
%    review={\MR{836418}},
%    doi={10.1007/BF01949148},
}

\bib{K-sim}{article}{
   author={Kopotun, K. A.},
   title={Simultaneous approximation by algebraic polynomials},
   journal={Constr. Approx.},
   volume={12},
   date={1996},
   number={1},
   pages={67--94},
%   issn={0176-4276},
%    review={\MR{1389920}},
%    doi={10.1007/s003659900003},
}

\bib{KLPS}{article}{
   author={Kopotun, K. A.},
   author={Leviatan, D.},
   author={Prymak, A.},
   author={Shevchuk, I. A.},
   title={Uniform and pointwise shape preserving approximation by algebraic  polynomials},
   journal={Surv. Approx. Theory},
   volume={6},
   date={2011},
   pages={24--74},
%   issn={1555-578X},
%   review={\MR{2832606}},
}

\bib{KLS-cjm}{article}{
   author={Kopotun, K. A.},
   author={Leviatan, D.},
   author={Shevchuk, I. A.},
   title={Convex polynomial approximation in the uniform norm: conclusion},
   journal={Canad. J. Math.},
   volume={57},
   date={2005},
   number={6},
   pages={1224--1248},
%   issn={0008-414X},
%    review={\MR{2178560}},
%    doi={10.4153/CJM-2005-049-6},
}

\bib{KLSUMZh}{article}{
   author={Kopotun, K. A.},
   author={Leviatan, D.},
   author={Shevchuk, I. A.},
   title={On one estimate of divided differences and its applications},
   language={English, with English and Ukrainian summaries},
   journal={Ukra\"{\i}n. Mat. Zh.},
   volume={71},
   date={2019},
   number={2},
   pages={230--245},
%   issn={1027-3190},
   translation={
      journal={Ukrainian Math. J.},
      volume={71},
      date={2019},
      number={2},
      pages={259--277},
 %     issn={0041-5995},
   },
%    review={\MR{3940740}},
%   doi={10.1007/s11253-019-01643-6},
}

\bib{KLSconspline}{article}{
   author={Kopotun, K. A.},
   author={Leviatan, D.},
   author={Shevchuk, I. A.},
   title={Interpolatory estimates for convex piecewise polynomial
   approximation},
   journal={J. Math. Anal. Appl.},
   volume={474},
   date={2019},
   number={1},
   pages={467--479},
%   issn={0022-247X},
%    review={\MR{3912911}},
%   doi={10.1016/j.jmaa.2019.01.055},
}

\bib{L57}{article}{
   author={Lebed', G. K.},
   title={Inequalities for polynomials and their derivatives},
   language={Russian},
   journal={Dokl. Akad. Nauk SSSR (N.S.)},
   volume={117},
   date={1957},
   pages={570--572},
%   issn={0002-3264},
 %  review={\MR{0094652}},
}

\bib{Li}{article}{
   author={Li, W.},
   title={On Timan type theorems in algebraic polynomial approximation},
   language={Chinese},
   journal={Acta Math. Sinica},
   volume={29},
   date={1986},
   number={4},
   pages={544--549},
%   issn={0583-1431},
 %   review={\MR{867708}},
}

 \bib{Lor}{article}{
   author={Lorentz, G. G.},
   title={Problem},
   conference={
      title={On Approximation Theory (Proceedings of Conference in
      Oberwolfach, 1963)},
   },
   book={
      publisher={Birkh\"{a}user, Basel},
   },
   date={1964},
   pages={185},
}

\bib{Sz}{article}{
   author={Szabados, J.},
   title={Discrete linear interpolatory operators},
   journal={Surv. Approx. Theory},
   volume={2},
   date={2006},
   pages={53--60},
%    review={\MR{2222209}},
}

\bib{Tel}{article}{
   author={Teljakovski\u{\i}, S. A.},
   title={Two theorems on approximation of functions by algebraic
   polynomials},
   language={Russian},
   journal={Mat. Sb. (N.S.)},
   volume={70 (112)},
   date={1966},
   pages={252--265},
%   review={\MR{0193402}},
}

\bib{T-1951}{article}{
   author={Timan, A. F.},
   title={A strengthening of Jackson's theorem on the best approximation of
   continuous functions by polynomials on a finite segment of the real axis},
   language={Russian},
   journal={Doklady Akad. Nauk SSSR (N.S.)},
   volume={78},
   date={1951},
   pages={17--20},
 %   review={\MR{0041276}},
}

\bib{Tr62}{article}{
   author={Trigub, R. M.},
   title={Approximation of functions by polynomials with integer
   coefficients},
   language={Russian},
   journal={Izv. Akad. Nauk SSSR Ser. Mat.},
   volume={26},
   date={1962},
   pages={261--280},
%   issn={0373-2436},
 %   review={\MR{0136912}},
}

\bib{Trigub}{article}{
   author={Trigub, R. M.},
   title={A general direct theorem on the approximation of functions in the
   class $C^r$ by algebraic polynomials with Hermite interpolation},
   language={Russian},
   journal={Dokl. Akad. Nauk},
   volume={386},
   date={2002},
   number={5},
   pages={599--601},
%   issn={0869-5652},
%    review={\MR{2004509}},
}

\bib{TIzv}{article}{
   author={Trigub, R. M.},
   title={Approximation of functions by polynomials with Hermite
   interpolation and with constraints on the coefficients},
   language={Russian, with Russian summary},
   journal={Izv. Ross. Akad. Nauk Ser. Mat.},
   volume={67},
   date={2003},
   number={1},
   pages={199--221},
   issn={1607-0046},
   translation={
      journal={Izv. Math.},
      volume={67},
      date={2003},
      number={1},
      pages={183--206},
      issn={1064-5632},
   },
%   review={\MR{1957922}},
%   doi={10.1070/IM2003v067n01ABEH000424},
}

%\bib{TB}{book}{
%   author={Trigub, Roald M.},
%   author={Bellinsky, Eduard S.},
%   title={Fourier analysis and approximation of functions},
%   note={[Belinsky on front and back cover]},
%   publisher={Kluwer Academic Publishers, Dordrecht},
%   date={2004},
%   pages={xiv+585},
%%   isbn={1-4020-2341-3},
%%   review={\MR{2098384}},
%%   doi={10.1007/978-1-4020-2876-2},
%}

\bib{V}{article}{
   author={V\'{e}rtesi, P.},
   title={Convergent interpolatory processes for arbitrary systems of nodes},
   journal={Acta Math. Acad. Sci. Hungar.},
   volume={33},
   date={1979},
   number={1-2},
   pages={223--234},
%   issn={0001-5954},
%    review={\MR{515137}},
%   doi={10.1007/BF01903398},
}

\bib{XZ}{article}{
   author={Xie, T.},
   author={Zhou, X.},
   title={A modification of Lagrange interpolation},
   journal={Acta Math. Hungar.},
   volume={92},
   date={2001},
   number={4},
   pages={285--297},
%   issn={0236-5294},
%    review={\MR{1925314}},
%   doi={10.1023/A:1013839226697},
}

\bib{Yu}{article}{
   author={Yu, X. M.},
   title={Pointwise estimate for algebraic polynomial approximation},
   journal={Approx. Theory Appl.},
   volume={1},
   date={1985},
   number={3},
   pages={109--114},
%   issn={1000-9221},
%    review={\MR{816613}},
}

\end{biblist}
\end{bibsection}

\end{document}